\titlespacing{\section}{0pt}{1ex}{-1ex}
\titlespacing{\subsection}{0pt}{1ex}{-1ex}
\begin{document}
\title{\textbf{A comparison of the Almgren-Pitts and the Allen-Cahn min-max theory}}

\author{Akashdeep Dey \thanks{Email: adey@math.princeton.edu, dey.akash01@gmail.com}}
\date{}
\maketitle

\newtheorem{thm}{Theorem}[section]
\newtheorem{lem}[thm]{Lemma}
\newtheorem{pro}[thm]{Proposition}
\newtheorem{clm}[thm]{Claim}
\newtheorem*{thm*}{Theorem}
\newtheorem*{lem*}{Lemma}
\newtheorem*{clm*}{Claim}

\theoremstyle{definition}
\newtheorem{defn}[thm]{Definition}
\newtheorem{ex}[thm]{Example}

\theoremstyle{remark}
\newtheorem{rmk}[thm]{Remark}

\numberwithin{equation}{section}

\newcommand{\mf}{manifold\;}
\newcommand{\vf}{varifold\;}
\newcommand{\hy}{hypersurface\;}
\newcommand{\Rm}{Riemannian\;}
\newcommand{\cn}{constant\;}
\newcommand{\mt}{metric\;} 
\newcommand{\st}{such that\;}
\newcommand{\Thm}{Theorem\;}
\newcommand{\Lem}{Lemma\;}
\newcommand{\Pro}{Proposition\;}
\newcommand{\Eqn}{Equation\;}
\newcommand{\eq}{equation}
\newcommand{\te}{there exists\;}
\newcommand{\tf}{Therefore, \;}
\newcommand{\wrt}{with respect to\;}
\newcommand{\bbr}{\mathbb{R}}
\newcommand{\bbn}{\mathbb{N}}
\newcommand{\bbz}{\mathbb{Z}}
\newcommand{\mres}{\mathbin{\vrule height 1.6ex depth 0pt width 0.13ex\vrule height 0.13ex depth 0pt width 1.3ex}}
\newcommand{\ra}{\rightarrow}
\newcommand{\lra}{\longrightarrow}
\newcommand{\fn}{function\;}
\newcommand{\sps}{Suppose\;}
\newcommand{\del}{\partial}
\newcommand{\seq}{sequence\;}
\newcommand{\cts}{continuous\;} 
\newcommand{\bF}{\mathbf{F}} 
\newcommand{\bM}{\mathbf{M}} 
\newcommand{\bL}{\mathbf{L}}
\newcommand{\cm}{\mathcal{C}(M)}
\newcommand{\zn}{\mathcal{Z}_n(M^{n+1}; \mathbb{Z}_2)}
\newcommand{\bbb}{\mathbf{B}}
\newcommand{\tx}{\tilde{X}}
\newcommand{\xt}{\tilde{X}}
\newcommand{\xtk}{\tilde{X}[K]}
\newcommand{\tph}{\tilde{\Phi}}
\newcommand{\pht}{\tilde{\Phi}}
\newcommand{\delst}{\partial^{*}}
\newcommand{\pst}{\tilde{\Psi}}

\newcommand{\cA}{\mathcal{A}}\newcommand{\cB}{\mathcal{B}}\newcommand{\cC}{\mathcal{C}}\newcommand{\cD}{\mathcal{D}}\newcommand{\cE}{\mathcal{E}}\newcommand{\cF}{\mathcal{F}}\newcommand{\cG}{\mathcal{G}}\newcommand{\cH}{\mathcal{H}}\newcommand{\cI}{\mathcal{I}}\newcommand{\cJ}{\mathcal{J}}\newcommand{\cK}{\mathcal{K}}\newcommand{\cL}{\mathcal{L}}\newcommand{\cM}{\mathcal{M}}\newcommand{\cN}{\mathcal{N}}\newcommand{\cO}{\mathcal{O}}\newcommand{\cP}{\mathcal{P}}\newcommand{\cQ}{\mathcal{Q}}\newcommand{\cR}{\mathcal{R}}\newcommand{\cS}{\mathcal{S}}\newcommand{\cT}{\mathcal{T}}\newcommand{\cU}{\mathcal{U}}\newcommand{\cV}{\mathcal{V}}\newcommand{\cW}{\mathcal{W}}\newcommand{\cX}{\mathcal{X}}\newcommand{\cY}{\mathcal{Y}}\newcommand{\cZ}{\mathcal{Z}}

\newcommand{\al}{\alpha}\newcommand{\be}{\beta}\newcommand{\ga}{\gamma}\newcommand{\de}{\delta}\newcommand{\ve}{\varepsilon}\newcommand{\et}{\eta}\newcommand{\ph}{\phi}\newcommand{\vp}{\varphi}\newcommand{\ps}{\psi}\newcommand{\ka}{\kappa}\newcommand{\la}{\lambda}\newcommand{\om}{\omega}\newcommand{\rh}{\rho}\newcommand{\si}{\sigma}\newcommand{\tht}{\theta}\newcommand{\ta}{\tau}\newcommand{\ch}{\chi}\newcommand{\ze}{\zeta}\newcommand{\Ga}{\Gamma}\newcommand{\De}{\Delta}\newcommand{\Ph}{\Phi}\newcommand{\Ps}{\Psi}\newcommand{\La}{\Lambda}\newcommand{\Om}{\Omega}\newcommand{\Si}{\Sigma}\newcommand{\Th}{\Theta}

\newcommand{\norm}[1]{\left\|#1\right\|}\newcommand{\md}[1]{\left|#1\right|}\newcommand{\Md}[1]{\Big|#1\Big|}
\newcommand{\db}[1]{[\![#1]\!]}
\newcommand{\ov}[1]{\overline{#1}}
\newcommand{\vol}{\operatorname{Vol}}
\newcommand{\spt}{\operatorname{spt}}\newcommand{\Ind}{\operatorname{Ind}}
\newcommand{\vto}{\vspace{-2ex}}\newcommand{\vth}{\vspace{-3ex}}\newcommand{\vfo}{\vspace{-4ex}}

\vspace{-4ex}
\begin{abstract}
	\vspace{-1.5ex}
	\noindent
	Min-max theory for the Allen-Cahn equation was developed by Guaraco \cite{Guaraco} and Gaspar-Guaraco \cite{GG1}. They showed that the Allen-Cahn widths are greater than or equal to the Almgren-Pitts widths. In this article we will prove that the reverse inequalities also hold i.e. the Allen-Cahn widths are less than or equal to the Almgren-Pitts widths. Hence, the Almgren-Pitts widths and the Allen-Cahn widths coincide. We will also show that all the closed minimal hypersurfaces (with optimal regularity) which are obtained from the Allen-Cahn min-max theory are also produced by the Almgren-Pitts min-max theory. As a consequence, we will point out that the index upper bound in the Almgren-Pitts setting, proved by Marques-Neves \cite{MN_index} and Li \cite{Li_index}, can also be obtained from the index upper bound in the Allen-Cahn setting, proved by Gaspar \cite{Gaspar} and Hiesmayr \cite{H}.
\end{abstract}

\section{Introduction}
Minimal submanifolds are defined by the condition that they are the critical points of the area functional. In \cite{alm_article,Alm}, Almgren studied the topology of the space of cycles and developed a min-max theory for the area functional. He proved that any closed, Riemannian manifold \((M^{n+1},g)\) contains a minimal variety of dimension \(l\) for every \(1\leq l \leq n\). The regularity theory in the co-dimension $1$ case was further developed by Pitts \cite{Pitts} and Schoen-Simon \cite{SS}. They proved that in a closed, Riemannian manifold \((M^{n+1},g)\), \(n+1 \geq 3\), there exists a closed, minimal hypersurface which is smooth and embedded outside a singular set of Hausdorff dimension \(\leq n-7.\)

In recent years, there have been a lot of research activities in the Almgren-Pitts min-max theory. By the work of Marques-Neves \cite{MN_ricci_positive} and Song \cite{Song}, every closed Riemannian manifold \((M^{n+1},g)\), \(3\leq n+1\leq 7\), contains infinitely many closed, minimal hypersurfaces. This was conjectured by Yau \cite{yau}. In \cite{IMN}, Irie, Marques and Neves proved that for a generic \mt \(g\) on \(M\), the union of all closed, minimal hypersurfaces is dense in \((M,g)\). This theorem was later quantified by Marques, Neves and Song in \cite{MNS} where they proved that for a generic metric there exists an equidistributed sequence of closed, minimal hypersurfaces in \((M,g)\). In higher dimensions, Li \cite{Li} has proved that every closed Riemannian manifold equipped with a generic \mt contains infinitely many closed minimal hypersurfaces with optimal regularity. The Weyl law for the volume spectrum $\{\om_k\}_{k=1}^{\infty}$, proved by Liokumovich, Marques and Neves \cite{LMN} played a major role in the arguments of \cite{IMN,MNS,Li}.

The Morse index of the minimal hypersurfaces produced by the Almgren-Pitts min-max theory has been obtained by Marques and Neves when the ambient dimension $3\leq n+1\leq 7$. In \cite{MN_index}, Marques and Neves showed that the index of the min-max minimal hypersurface is bounded from above by the dimension of the parameter space. Zhou \cite{Zhou2} has proved that for a generic (bumpy) metric, the min-max minimal hypersurfaces have multiplicity one which was conjectured by Marques and Neves. Using the Morse index upper bound \cite{MN_index} and multiplicity one theorem \cite{Zhou2}, Marques and Neves \cite{MN_index_2} have proved the following theorem. For a generic (bumpy) \mt there exists a sequence of closed, embedded, two-sided minimal hypersurfaces \(\{\Si_k\}_{k=1}^{\infty}\) in \((M^{n+1},g)\) \st \(\Ind(\Si_k)=k\) and \(\cH^n(\Si_k)=\om_k\sim k^{\frac{1}{n+1}}\). This theorem has been generalized by Marques, Montezuma and Neves in \cite{mmn} where they have proved the strong Morse inequalities for the area functional. In higher dimensions, Morse index upper bound has been proved by Li \cite{Li_index}.

In \cite{Guaraco}, Guaraco introduced a new approach for the min-max construction of minimal hypersurfaces which was further developed by Gaspar and Guaraco in \cite{GG1}. This approach is based on the study of the limiting behaviour of solutions to the Allen-Cahn equation. The Allen-Cahn equation (with parameter \(\ve >0\)) is the following semi-linear, elliptic PDE
\[AC_{\ve}(u):=\ve\De u-\ve^{-1} W'(u)=0\]
where \(W:\bbr \ra \bbr\) is a double well potential e.g. \(W(t)=\frac{1}{4}(1-t^2)^2.\) The solutions of this equation are precisely the critical points of the energy functional
\[E_{\ve}(u)=\int_M\ve\frac{|\nabla u|^2}{2}+\frac{W(u)}{\ve}.\]
Building on the work of Hutchinson-Tonegawa \cite{HT}, Tonegawa \cite{t} and Tonegawa-Wickramasekera \cite{TW}, Guaraco \cite{Guaraco} proved that if \(\{u_i\}_{i=1}^{\infty}\) is a sequence of solutions to the Allen-Cahn equation \(AC_{\ve_i}(u_i)=0\), \(\ve_i \ra 0\) with \(E_{\ve_i}(u_i)\) and \(\Ind(u_i)\) are uniformly bounded, then, possibly after passing to a subsequence, the level sets of \(u_i\) accumulate around a closed, minimal hypersurface with optimal regularity. (Such a minimal hypersurface is called a \textit{limit-interface}.) Moreover, by a mountain-pass argument he proved the existence of critical points of \(E_{\ve}\) with uniformly bounded energy and Morse index. In this way he obtained a new proof of the previously mentioned theorem of Almgren-Pitts-Schoen-Simon. The index of the limit-interface is bounded by the index of the solutions. This was proved by Hiesmayr \cite{H} assuming the limit-interface is two-sided and by Gaspar \cite{Gaspar} in the general case. 

In \cite{GG1,GG}, Gaspar and Guaraco studied the phase transition spectrum which is the Allen-Cahn analogue of the volume spectrum. They proved that the phase transition spectrum satisfies a Weyl law similar to the volume spectrum and gave alternative proofs of the density \cite{IMN} and the equidistribution \cite{MNS} theorems. In \cite{CM}, Chodosh and Mantoulidis proved the multiplicity one conjecture in the Allen-Cahn setting in dimension \(3\) and the upper semicontinuity of the Morse index when the limit-interface has multiplicity one. As a consequence, they proved that for a generic (bumpy) metric \(g\) on a closed manifold \(M^3\), there exists a sequence of closed, embedded, two-sided minimal surfaces \(\{\Si_k\}_{k=1}^{\infty}\) in \((M^3,g)\) \st \(\Ind(\Si_k)=k\) and \(\text{area}(\Si_k)\sim k^{1/3}\). 

If \(\Si\) is a non-degenerate, separating, closed, embedded minimal hypersurface in a closed Riemannian manifold, Pacard and Ritor\'{e} \cite{pr} constructed solutions of the Allen-Cahn equation \(AC_{\ve}(u)=0\) for sufficiently small \(\ve>0\) whose level sets converge to \(\Si\). The uniqueness of these solutions has been proved by Guaraco, Marques and Neves \cite{gmn}. The construction of Pacard and Ritor\'{e} has been extended by Caju and Gaspar \cite{cg} in the case when all the Jacobi fields of \(\Si\) are induced by the ambient isometries.

In the present article we will be interested in the question to what extent the Almgren-Pitts min-max theory and the Allen-Cahn min-max theory agree. Part of this question has been answered by Guaraco \cite{Guaraco} and Gaspar-Guaraco \cite{GG1}; they proved that the Almgren-Pitts widths are less than or equal to the Allen-Cahn widths. The aim of this article is to prove the reverse inequality i.e the Allen-Cahn widths are less than or equal to the Almgren-Pitts widths.

To precisely state our main result, we need some facts about the universal \(G\)-principal bundle. We will follow the book by Dieck \cite{Dieck}*{Chapter 14.4} and the paper by Gaspar and Guaraco \cite{GG1}*{Appendix B} where further details can be found. Let \(G\) be a topological group and \(p_{G}:EG \ra BG\) be a universal \(G\)-principal bundle (which is unique upto isomorphism). Given a topological space \(B\), there exists a one-to-one correspondence between the set of homotopy classes of maps \(B \ra BG\) and the set of isomorphism classes of numerable \(G\)-principal bundles over \(B\). If \(f_1,f_2:B \ra BG\) are homopotic, \(f_1^{*}EG\) and \(f_2^{*}EG\) are isomorphic numerable principal \(G\)-bundles over \(B\). Conversely, if \(E\) is a numerable free \(G\)-space, there exists a \(G\)-map from \(E\) to \(EG\) which is unique upto \(G\)-homotopy. Denoting \(B=E/G\), if \(F_1,F_2:E\ra EG\) are \(G\)-maps, they descend to homotopic maps \(f_1,\;f_2 :B \ra BG\). We also note the following facts: a numerable \(G\)-principal bundle \(\mathfrak{p}:\cE \ra \cB\) is universal if \(\cE\) is a contractible topological space \cite{Dieck}*{14.4.12}; each open covering of a paracompact space is numerable \cite{Dieck}*{13.1.3}.

We refer to Section 2 for the definitions and notations used in the rest of this section. Let \((M^{n+1},g)\) be a closed Riemannian manifold, \(n+1\geq 3\). Let \(X\) be a cubical complex and we fix a double cover \(\pi:\tx \ra X\). Since the space \(\mathbf{I}_{n+1}(M^{n+1};\bF;\bbz_2)\) is contractible \cite{MN_index_2} and every metric space is paracompact, \(\del:\mathbf{I}_{n+1}(M^{n+1};\bF;\bbz_2) \ra \cZ_n(M^{n+1};\bF;\bbz_2)\) is a universal \(\bbz_2\)-principal bundle. We denote by \(\Pi\) the homotopty class of maps \(X \ra \cZ_n(M^{n+1};\bF;\bbz_2)\) corresponding to the double cover \(\pi:\tx \ra X\). More concretely, \(\Pi\) is the set of all maps \(\Ph: X \ra \cZ_n(M^{n+1};\bF;\bbz_2)\) \st \(\ker(\Ph_{*})=\text{im}(\pi_{*})\) where 
\[\Ph_{*}:\pi_1(X)\ra \pi_1\left(\cZ_n(M^{n+1};\bF;\bbz_2)\right)(=\bbz_2),\quad \pi_{*}:\pi_1(\tx)\ra \pi_1(X)\]
are the maps induced by \(\Ph,\;\pi\).

Similarly, \(H^1(M)\setminus\{0\}\) is contractible and there is a free \(\bbz_2\) action on this space given by \(u \mapsto -u\). Therefore, \(H^1(M)\setminus\{0\}\) (equipped with the \(\bbz_2\) action) is the total space of a universal \(\bbz_2\)-principal bundle. Let \(\tilde{\Pi}\) denote the set of all \(\bbz_2\)-equivariant maps \(h:\tx \ra H^1(M)\setminus\{0\}\) i.e. if \(T:\tx \ra \tx\) is the deck transformation, \(h(T(x))=-h(x)\) for all \(x \in \tx\).

The following theorem follows from the work of Guaraco \cite{Guaraco} and Gaspar-Guaraco \cite{GG1}.
\begin{thm}[\cite{Guaraco,GG1}]\label{thm GG}
Let \(\bL_{AP}(\Pi)\) be the Almgren-Pitts width of \(\Pi\) (\eqref{2 def AP width}) and \(\bL_{\ve}(\tilde{\Pi})\) be the \(\ve\)-Allen-Cahn width of \(\tilde{\Pi}\) (\eqref{2 def AC width}). Then the following inequality holds.
\begin{equation}
	\bL_{AP}(\Pi)\leq \frac{1}{2\si}\liminf_{\ve\ra 0^{+}} \bL_{\ve}(\tilde{\Pi}).\label{width ineq of GG}
\end{equation}
As a consequence, the following inequality holds between the volume spectrum and the phase transition spectrum.
\begin{equation}
\om_p \leq \frac{1}{2\si}\liminf_{\ve\ra 0^{+}}c_{\ve}(p) \; \forall p\in \bbn.\label{spec ineq of GG}
\end{equation}
\end{thm}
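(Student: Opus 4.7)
The consequence \eqref{spec ineq of GG} reduces to \eqref{width ineq of GG} by specialization: for each $p\in\bbn$, take $X$ and the double cover $\pi:\tx\to X$ to come from a nearly optimal $p$-sweepout (respectively, a nearly optimal $p$-equivariant family), and apply the width inequality to each. The core task is therefore \eqref{width ineq of GG}, and the plan is to pass from a nearly-optimal equivariant family of Sobolev functions in $\tilde\Pi$ to an Almgren-Pitts sweepout in $\Pi$ via a level-set construction, with the Modica-Mortola inequality producing the factor $1/(2\si)$.

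Fix $\eta>0$ and choose $h_\ve\in\tilde\Pi$ with $\sup_{x\in\tx}E_\ve(h_\ve(x))\leq \bL_\ve(\tilde\Pi)+\eta$; by a mollification argument we may assume $h_\ve(x)$ is smooth on $M$ for every $x$. For $(x,t)\in\tx\times(-1,1)$ set
\[
\tph_\ve(x,t):=\del^{*}\{y\in M:h_\ve(x)(y)>t\}\in\cZ_n(M^{n+1};\bF;\bbz_2),
\]
defined for almost every $t$ by Sard and coarea. The equivariance $h_\ve(Tx)=-h_\ve(x)$ gives $\{h_\ve(Tx)>-t\}=\{h_\ve(x)<t\}$, so $\tph_\ve(Tx,-t)=\tph_\ve(x,t)$. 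Hence $\tph_\ve$ descends to the quotient $Y:=(\tx\times[-1,1])/\bbz_2$ under the free diagonal action $(x,t)\mapsto(Tx,-t)$. The natural projection $Y\to X$ is an interval bundle with contractible fibers, so $Y\simeq X$; composing with a section $s:X\to Y$ produces $\Ph_\ve:=\tph_\ve\circ s:X\to\cZ_n(M^{n+1};\bF;\bbz_2)$. Since the classifying map of $Y\to X$ is the same $\bbz_2$-bundle as $\pi$, the induced map $(\Ph_\ve)_{*}$ on $\pi_1$ has the required kernel, so $\Ph_\ve\in\Pi$.

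The mass bound is the classical Modica-Mortola estimate: for each $x\in\tx$, AM-GM applied to the integrand and coarea give
\[
\int_{-1}^{1}\sqrt{2W(t)}\;\bM(\tph_\ve(x,t))\,dt\;=\;\int_M|\nabla h_\ve(x)|\sqrt{2W(h_\ve(x))}\,dv_g\;\leq\;E_\ve(h_\ve(x)).
\]
Reparameterizing by $\tau(t):=\int_{-1}^{t}\sqrt{2W(s)}\,ds\in[0,2\si]$, this states that the $\tau$-average of $\bM(\tph_\ve(x,t(\tau)))$ is at most $E_\ve(h_\ve(x))/(2\si)$. To convert the \emph{average} bound into the \emph{supremum} bound needed for a min-max width, I would subdivide the $\tau$-interval into $N$ equal pieces, apply the discretization/interpolation machinery of Almgren-Pitts (as refined by Marques-Neves and used in \cite{LMN}) to the resulting finite slab of parameter slices, and let $N\to\infty$ to recover a continuous $\bF$-map in $\Pi$ whose supremum mass exceeds the $\tau$-average by an error that vanishes with the mesh.

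Combining these steps gives $\bL_{AP}(\Pi)\leq(\bL_\ve(\tilde\Pi)+\eta)/(2\si)+o_\ve(1)$; taking $\liminf_{\ve\to 0^{+}}$ and then $\eta\to 0$ yields \eqref{width ineq of GG}. The main obstacle is the \emph{average-to-supremum} upgrade: the level-set map can jump in the $\bF$-metric across critical values of $h_\ve(x)$, and keeping the homotopy class in $\Pi$ intact while reducing the $\tau$-discretization is exactly where the Almgren-Pitts interpolation lemmas, together with the contractibility of $\mathbf{I}_{n+1}(M^{n+1};\bF;\bbz_2)$ and the classification of maps into $\cZ_n(M^{n+1};\bF;\bbz_2)$ by double covers, all play essential roles.
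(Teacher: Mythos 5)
The paper itself does not prove Theorem \ref{thm GG} --- it cites it from \cite{Guaraco,GG1} and only uses the conclusion --- so the comparison is necessarily with the argument in those references and with the closely related construction the paper carries out in Section 4 (Propositions \ref{pro 4.1}, \ref{pro 4.2} and Lemma \ref{lem 4.3}). Against that standard, your setup is largely correct: the coarea/AM--GM computation, the reparameterization to $\tau\in[0,2\si]$, and the identification of the homotopy class via the quotient $Y=(\tx\times[-1,1])/\bbz_2$ all parallel the actual argument, and you correctly locate the crux as converting an average-over-levels mass bound into a supremum bound.

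Your proposed resolution of that crux, however, does not work. Subdividing the $\tau$-interval into $N$ pieces, interpolating across the ``finite slab of parameter slices,'' and letting $N\to\infty$ amounts to sampling the level-set map at a mesh of levels; the supremum of $\bM$ over that mesh does not converge to the $\tau$-average, but rather increases toward $\sup_\tau \bM(\tph_\ve(x,t(\tau)))$, over which there is no a priori control. The claim ``whose supremum mass exceeds the $\tau$-average by an error that vanishes with the mesh'' has no justification and is false in general. The correct move --- which is what \cite{Guaraco,GG1} and the paper's Proposition \ref{pro 4.2} do --- is to discretize in the parameter space $\tx$, not in $\tau$. Fix a fine cubical mesh of $\tx$; for each vertex $x$ apply the mean value theorem to $\frac{1}{2\si}\int_0^{2\si}\bM(\tph_\ve(x,t(\tau)))\,d\tau\le E_\ve(h_\ve(x))/(2\si)$ to choose a \emph{single} level $\la(x)$ with $\bM\big(\del\db{\{h_\ve(x)>\la(x)\}}\big)\le E_\ve(h_\ve(x))/(2\si)$, arranging $\la(Tx)=-\la(x)$ so the choice is equivariant. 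The resulting discrete map $x\mapsto\del\db{\{h_\ve(x)>\la(x)\}}$ then has the desired mass bound at every vertex and is fine in the flat norm --- this is Lemma \ref{lem 4.3}, restated from \cite{Guaraco}. Applying the Pitts / Zhou / Marques--Neves interpolation theorems over $X$ (not over $\tau$) produces a continuous $\bM$-map in $\Pi$ whose supremum mass exceeds the discrete supremum only by an error vanishing with the $X$-mesh, giving $\bL_{AP}(\Pi)\le\frac{1}{2\si}\bL_\ve(\tilde\Pi)+o_\ve(1)$. A smaller point: the preliminary mollification of $h_\ve(x)$ is unnecessary (level sets of $H^1$ functions are Caccioppoli for a.e.\ level) and, as stated, quietly requires that equivariance, continuity in $H^1$, the homotopy class, and the energy bound are all preserved uniformly in $x$.
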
\vspace{-2ex}
In the present article we will show that the reverse inequality also holds. More precisely, we will prove the following theorem.
\begin{thm}\label{thm main thm}
	We have the following inequality between the Almgren-Pitts width and the \(\ve\)-Allen-Cahn width.
\begin{equation}
	\frac{1}{2\si}\limsup_{\ve \ra 0^{+}} \bL_{\ve}(\tilde{\Pi}) \leq \bL_{AP}(\Pi).\label{width ineq}
\end{equation}
As a consequence we have,
\begin{equation}
	\frac{1}{2\si}\limsup_{\ve \ra 0^{+}}c_{\ve}(p)\leq \om_p\;\forall p\in \bbn.\label{spec ineq}
\end{equation}
\end{thm}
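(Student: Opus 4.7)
The plan is to construct, from a nearly-optimal Almgren-Pitts sweepout in $\Pi$, a continuous $\bbz_2$-equivariant map $h\colon \tx\to H^1(M)\setminus\{0\}$ whose energies are controlled by $2\si \sup_{X}\bM(\Ph(x))+o_\ve(1)$. Fix $\de>0$ and choose $\Ph\in\Pi$ with $\sup_{x\in X}\bM(\Ph(x))\le\bL_{AP}(\Pi)+\de$. Since $\del\colon \mathbf{I}_{n+1}(M^{n+1};\bF;\bbz_2)\to\cZ_n(M^{n+1};\bF;\bbz_2)$ is a universal $\bbz_2$-principal bundle, the map $\Ph\circ\pi\colon \tx\to\cZ_n(M^{n+1};\bF;\bbz_2)$ lifts to a continuous $\bbz_2$-equivariant map $\pht\colon \tx\to \mathbf{I}_{n+1}(M^{n+1};\bF;\bbz_2)$ with $\del\circ\pht=\Ph\circ\pi$; here the $\bbz_2$-action on $\mathbf{I}_{n+1}$ sends $\Om$ to its complement.

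The heart of the proof is to produce a continuous $\bbz_2$-equivariant map $\Psi_\ve\colon \mathbf{I}_{n+1}(M^{n+1};\bF;\bbz_2)\to H^1(M)\setminus\{0\}$ so that, for every $\Om$ in a suitably dense subclass,
\[E_\ve(\Psi_\ve(\Om))\le 2\si\,\bM(\del\Om)+\eta(\ve),\]
with $\eta(\ve)\to 0$ uniformly on mass-bounded sets. The model for $\Psi_\ve(\Om)$ is $\mathbb{H}(d_\Om^s/\ve)$, where $\mathbb{H}$ is the one-dimensional heteroclinic solution of $\mathbb{H}''=W'(\mathbb{H})$ with $\mathbb{H}(\pm\infty)=\pm 1$, and $d_\Om^s$ is the signed distance to $\del\Om$ (positive in $\Om$). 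For a smooth Caccioppoli set, the Modica-Mortola computation gives $E_\ve(\mathbb{H}(d_\Om^s/\ve))\to 2\si\,\cH^n(\del\Om)$, and the $\bbz_2$-equivariance $\Psi_\ve(-\Om)=-\Psi_\ve(\Om)$ is automatic since $d_{-\Om}^s=-d_\Om^s$ and $\mathbb{H}$ is odd.

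The main obstacle is that this naive assignment is not $\bF$-continuous on general Caccioppoli sets and may fail to land in $H^1\setminus\{0\}$ when $\del\Om$ is irregular. I would handle this by a discretization-plus-interpolation scheme modelled on the Almgren-Pitts and Marques-Neves discretization theorems. Subdivide $X$ (and correspondingly $\tx$) so finely that $\pht$ varies by a prescribed small $\bF$-amount on each closed cell; at every vertex $v$ replace $\pht(v)$ by a smooth-boundary Caccioppoli set $\Om_v$ that is $\bF$-close and satisfies $\bM(\del\Om_v)\le\bM(\del\pht(v))+\de$, done equivariantly so $\Om_{T(v)}=M\setminus\Om_v$, and set $u_v=\mathbb{H}(d_{\Om_v}^s/\ve)$. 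Over each cell, connect neighbouring vertex values along a short continuously-varying sweepout of smooth level sets $\{f_\al\le t\}$, so that the induced path $\mathbb{H}((f_\al-t)/\ve)$ is $H^1$-continuous with $E_\ve$ bounded by $2\si\max_t\cH^n(\{f_\al=t\})+o_\ve(1)$.

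The delicate point is arranging these cellwise interpolations so that (i) they agree on shared faces to define a globally continuous map on $\tx$, (ii) they remain $\bbz_2$-equivariant, and (iii) no intermediate hypersurface exceeds the vertex maxima by more than $O(\de)$; this is the Allen-Cahn analogue of the Marques-Neves interpolation theorem and will consume the bulk of the argument. Once it is in place, $h=\Psi_\ve\circ\pht$ lies in $\tilde\Pi$ by equivariance and satisfies $\sup_{\tx}E_\ve(h)\le 2\si(\bL_{AP}(\Pi)+\de)+\eta(\ve)$. Consequently $\bL_\ve(\tilde\Pi)\le 2\si(\bL_{AP}(\Pi)+\de)+\eta(\ve)$; dividing by $2\si$, taking $\limsup_{\ve\to 0^{+}}$, and letting $\de\to 0$ yields \eqref{width ineq}. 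The spectrum inequality \eqref{spec ineq} then follows by applying \eqref{width ineq} to $p$-parameter sweepouts almost realizing $\om_p$.
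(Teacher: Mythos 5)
Your plan tracks the paper's argument at the level of strategy: take a nearly optimal mass-continuous sweepout $\Ph$, lift it to $\pht:\tx\to\cm$, replace each vertex value by a Caccioppoli set with smooth boundary, convert sets to $H^1$ functions via the one-dimensional profile composed with the signed distance, and interpolate cellwise using a Morse-function sweepout. All of those ingredients appear in Section~3, and you correctly identify the interpolation step as the crux. However, there is a genuine gap in that step, and it is not one that can be filled by a routine adaptation of the Marques--Neves interpolation theorem.

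The problem is this: if you subdivide $X$ finely and connect adjacent vertex values $u_v=\mathbb{H}(d_{\Om_v}^s/\ve)$ and $u_{v'}=\mathbb{H}(d_{\Om_{v'}}^s/\ve)$ by running a global Morse-function sweepout $t\mapsto \mathbb{H}((f-t)/\ve)$, the intermediate energies are of order $2\si\,\cH^n(f^{-1}(t))$, which is a constant depending on $(M,g)$ and has nothing to do with $\bL_{AP}(\Pi)$. Fine subdivision makes $\bF(\Om_v,\Om_{v'})$ small but does not make the Hausdorff distance between $\del\Om_v$ and $\del\Om_{v'}$ small, so the intermediate level sets of a naive transition can have large area. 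Your condition ``(iii) no intermediate hypersurface exceeds the vertex maxima by more than $O(\de)$'' is exactly what fails without an additional mechanism. The paper's resolution has two pieces you did not supply. First, the discrete sweepout $\pht$ is refined to a map $\pst$ on $\tx[KI]_0$ using a nested-sweepout construction (the $\Om(\cdot,\cdot;B_i)$ operation of Section~3.4, following Chambers--Liokumovich): moving across a single $1$-cell of the fine subdivision changes $\pst$ only inside one small ball $\bbb_i$. Because $r_0$ was chosen so that $\bM(\Ph(x)\mres\ov{B}(p,r_0))<\et$ for all $x$, and because the discretization is uniformly fine in mass (conditions \eqref{cond: estimating mass norm}--\eqref{cond: estimating flat norm}), the mass added or removed inside any one such ball is $O(\et)$; Proposition~3.9 records this quantitatively. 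Second, the interpolation operator is not a bare Morse sweepout but the min/max construction $\tht(u_0,u_1,w_\ve(t))$ of \eqref{def of phi, psi}, whose key properties are that it agrees with $u_0$ (resp.\ $u_1$) where $w_\ve(t)=\pm1$, it fixes any point where $u_0$ and $u_1$ already agree (Proposition~3.10(iv)), and its energy over any region is bounded by the sum of the three inputs' energies (Proposition~3.10(v)). Combined with the locality from the nested sweepout, this means the extra energy at any intermediate time is confined to one small ball and one thin annulus, each costing $O(\et)$, and Proposition~3.12 assembles these local estimates into a global bound $L+\al(k)\et$. Without the nested refinement, your interpolation has no such localization, and the claimed energy bound does not follow; with it, the rest of your outline is essentially the paper's proof.
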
\vspace{-4ex}
Hence, combining \eqref{width ineq of GG} and \eqref{width ineq} we conclude that \(\frac{1}{2\si}\lim_{\ve\ra 0^{+}}\bL_{\ve}(\tilde{\Pi})\) exists and is equal to \(\bL_{AP}(\Pi)\). Similarly, \eqref{spec ineq of GG} and \eqref{spec ineq} together imply that \(\frac{1}{2\si}\lim_{\ve\ra 0^{+}}c_{\ve}(p)\) exists and is equal to \(\om_p\) for all \(p \in \bbn\). When the ambient dimension \(3\leq n+1 \leq 7\), it was proved by Gaspar and Guaraco \cite{GG} that \(\lim_{\ve\ra 0^{+}}c_{\ve}(p)\) exists. 

The next theorem essentially follows from the work of Hutchinson-Tonegawa \cite{HT}, Guaraco \cite{Guaraco} and Gaspar-Guaraco \cite{GG1}. Informally speaking, it says that all the minimal hypersurfaces obtained from the Allen-Cahn min-max theory are also produced by the Almgren-Pitts min-max theory.

\begin{thm}\label{thm critical set}
Let \(\mathbf{C}_{AC}(\tilde{\Pi})\) be as defined at the end of Section \ref{section 2.5} and \(\mathbf{C}_{AP}(\Pi)\) be as defined at the end of Section \ref{section 2.3}. If \(V \in \mathbf{C}_{AC}(\tilde{\Pi})\), then \(V \in \mathbf{C}_{AP}(\Pi)\) as well.
\end{thm}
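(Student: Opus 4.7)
The plan is to establish two things about $V$: (i) $V$ is a stationary, integral $n$-varifold of mass $\bL_{AP}(\Pi)$ with optimal regularity, and (ii) $V$ is realized as a varifold limit along some Almgren--Pitts min-max sequence in $\Pi$.

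Part (i) is a compilation of prior results. By the definition of $\mathbf{C}_{AC}(\tilde{\Pi})$, the varifold $V$ arises from Allen--Cahn critical points $u_i$ with $\ve_i \to 0^{+}$, satisfying $u_i \approx h_i(\tilde{x}_i)$ in $H^1$ for some min-max sequence $h_i \in \tilde{\Pi}$ and $\tilde{x}_i \in \tx$, and $V = \frac{1}{2\si}\lim_i V_{u_i}$ as varifolds. The min-max construction provides uniform energy and Morse-index bounds on $u_i$, so by Hutchinson--Tonegawa, Tonegawa, and Tonegawa--Wickramasekera, $V$ is stationary, integral, and smoothly embedded outside a singular set of Hausdorff dimension at most $n-7$. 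Combining Theorem \ref{thm GG} with Theorem \ref{thm main thm} yields $\|V\|(M) = \frac{1}{2\si}\lim_{\ve \to 0^{+}}\bL_{\ve}(\tilde{\Pi}) = \bL_{AP}(\Pi)$.

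For part (ii), I would reuse the level-set sweep-out construction underlying the proof of Theorem \ref{thm main thm}. Given the Allen--Cahn sweep-out $h_i \in \tilde{\Pi}$, the assignment $(\tilde{x}, t) \mapsto \delst\{h_i(\tilde{x}) > t\}$ descends, via the $\bbz_2$-equivariance $h_i \circ T = -h_i$ and an odd reparameterization of $t \in [-\be, \be]$, to a map $\Phi_i: X \to \cZ_n(M^{n+1};\bF;\bbz_2)$ in the class $\Pi$. The one-dimensional co-area/profile estimate produced in the proof of Theorem \ref{thm main thm},
\[\sup_{x \in X} \bM(\Phi_i(x)) \leq \frac{1}{2\si}\sup_{\tilde{x} \in \tx} E_{\ve_i}(h_i(\tilde{x})) + o(1),\]
shows that $\{\Phi_i\}$ is an Almgren--Pitts min-max sequence for $\Pi$ whose width converges to $\bL_{AP}(\Pi)$. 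To extract $V$ as a concrete varifold limit, I set $x_i = \pi(\tilde{x}_i) \in X$, substitute the nearby critical point $u_i$ for $h_i(\tilde{x}_i)$ in the slicing (an $o(1)$ perturbation in mass), and invoke Sard's theorem to choose levels $t_i \to 0$ which are regular values of $u_i$. The Hutchinson--Tonegawa concentration of the rescaled energy density on $\{u_i \approx 0\}$ then gives $|\delst\{u_i > t_i\}| \to V$ as varifolds and $\bM(\Phi_i(x_i)) = \cH^n(\{u_i = t_i\}) \to \|V\|(M) = \bL_{AP}(\Pi)$, placing $V$ in $\mathbf{C}_{AP}(\Pi)$.

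The main technical hurdle I anticipate is this last step: upgrading the mod-$2$ flat convergence of the level-set boundaries to $n$-varifold convergence with no loss of mass, along a diagonal sequence of regular values $t_i \to 0$. This relies on the Hutchinson--Tonegawa statement that $\cH^n \mres \{u_i = t_i\}$ and the rescaled Allen--Cahn energy density of $u_i$ share a common weak limit for generic $t_i$, together with a diagonal extraction coupling the Almgren--Pitts sweep-out parameter $x_i \in X$ to the Allen--Cahn parameter $\tilde{x}_i \in \tx$; the rest is a structural assembly of ingredients already present in Theorems \ref{thm GG} and \ref{thm main thm}.
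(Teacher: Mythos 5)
Your overall outline matches the paper's strategy: establish the regularity and mass of $V$ from prior results, then construct an Almgren--Pitts minimizing sequence in $\Pi$ from level sets of an Allen--Cahn minimizing sequence, replacing one vertex value by a level set of the critical point $u_i$. But the step you flag in your final paragraph as ``the main technical hurdle'' is not a hurdle around which the rest of the argument is already complete; it is the heart of the proof, and the way you characterize it understates what is needed. You invoke a ``Hutchinson--Tonegawa statement that $\cH^n \mres \{u_i = t_i\}$ and the rescaled Allen--Cahn energy density of $u_i$ share a common weak limit for generic $t_i$,'' but Hutchinson--Tonegawa control the \emph{averaged} varifold $V[u_i]$ (an integral over all levels) and the associated first-variation bounds; convergence in the $\bF$-metric of a \emph{single} level-set varifold $\md{\{w_i=t_i\}}$ to $V$ is a genuine refinement. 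The paper's Proposition~\ref{pro 4.1} proves exactly this, and the argument is substantial: it isolates a good compact portion of the regular part of $\spt(V)$, bounds the nearest-point-projection Jacobian, uses HT's tilt-excess and sheeting estimates (their Propositions~4.3, 5.1, 5.5, 5.6) together with the monotonicity formula on a Besicovitch ``good set,'' and only then \emph{averages over $t\in[-\si_b/2,\si_b/2]$} to select a level $t_i$ --- not Sard's theorem, and not ``for generic $t_i$'' in any soft sense. Your proposal leaves this entirely to a citation that does not directly exist.

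Two further inaccuracies. First, the co-area/profile estimate you attribute to the proof of Theorem~\ref{thm main thm} goes the wrong way: Theorem~\ref{thm main thm} turns Almgren--Pitts sweepouts into Allen--Cahn sweepouts and bounds energy by area; the inequality $\si_b\,\bM(\delst\{w>t\}) \le \tfrac12 E_\ve(u)$ that you need is the Modica--Mortola direction used in Theorem~\ref{thm GG} (Guaraco, Gaspar--Guaraco) and in the paper's Proposition~\ref{pro 4.2}. Second, substituting the critical point $u_i$ for $h_i(\tilde x_i)$ is \emph{not} an $o(1)$ perturbation in mass: $H^1$-closeness controls the flat norm of the level-set boundaries (the paper's Lemma~\ref{lem 4.3}), not their mass; the paper instead bounds the mass at the special vertex $x_i^*$ separately, using $\bF(V,\md{\{w_i=t_i\}})\le s$ to get $\bM \le L+s$, and takes a maximum of two bounds. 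Finally, your construction skips the passage from a discrete map that is only fine in the flat norm to a continuous map in $\Pi$ with controlled mass --- the interpolation theorems of Zhou and Marques--Neves, which Proposition~\ref{pro 4.2} invokes explicitly --- without which the sequence $\{\Phi_i\}$ you produce is not even in $\Pi$.
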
\vspace{-2ex}
Combining the index estimate of Gaspar \cite{Gaspar} (Theorem \ref{thm index}) and the above Theorem \ref{thm critical set}, one can obtain an alternative proof of the following Morse index upper bound in the Almgren-Pitts min-max theory proved by Marques-Neves \cite{MN_index} and Li \cite{Li_index}.

\begin{thm}[\cite{MN_index,Li_index}]
	Let $\dim(X)=\dim(\tx)=k$. There exists \(V \in \mathbf{C}_{AP}(\Pi)\) \st \(\Ind(\spt(V))\) is less than or equal to \(k\).
\end{thm}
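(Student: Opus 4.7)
The plan is to combine the Allen-Cahn min-max scheme of Guaraco and Gaspar-Guaraco, the index bound of Gaspar (Theorem \ref{thm index}), and the identification Theorem \ref{thm critical set}, using the $k$-dimensional parameter space $\tx$ throughout.

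First, I would run the $\bbz_2$-equivariant mountain-pass construction of Guaraco \cite{Guaraco} and Gaspar-Guaraco \cite{GG1} on the homotopy class $\tilde{\Pi}$. For each sufficiently small $\ve>0$, this produces a nontrivial critical point $u_{\ve}$ of $E_{\ve}$ with $E_{\ve}(u_{\ve})=\bL_{\ve}(\tilde{\Pi})$. Since $\tx$ has dimension $k$, the Morse index estimate for equivariant min-max of Gaspar \cite{Gaspar} and Hiesmayr \cite{H} yields $\Ind(u_{\ve})\leq k$.

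Next, I would extract a limit-interface. Take a sequence $\ve_i\to 0^{+}$. By \eqref{width ineq of GG} the energies $E_{\ve_i}(u_{\ve_i})=\bL_{\ve_i}(\tilde{\Pi})$ are uniformly bounded (in fact by $2\si\bL_{AP}(\Pi)+o(1)$), and the indices are uniformly bounded by $k$. Therefore, by the Hutchinson-Tonegawa theory \cite{HT} together with the regularity results of Tonegawa-Wickramasekera \cite{TW}, after passing to a subsequence the associated varifolds $V_{\ve_i}=\frac{1}{2\si}V(u_{\ve_i})$ converge in the varifold topology to a stationary integral varifold $V$ whose support is a closed minimal hypersurface, smooth and embedded outside a singular set of Hausdorff dimension at most $n-7$. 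By the very definition of $\mathbf{C}_{AC}(\tilde{\Pi})$ at the end of Section \ref{section 2.5}, this limit $V$ lies in $\mathbf{C}_{AC}(\tilde{\Pi})$.

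Finally, I would apply two results in sequence. Gaspar's index estimate (Theorem \ref{thm index}) gives $\Ind(\spt(V))\leq \liminf_{i\to\infty}\Ind(u_{\ve_i})\leq k$. Theorem \ref{thm critical set} then upgrades membership in $\mathbf{C}_{AC}(\tilde{\Pi})$ to membership in $\mathbf{C}_{AP}(\Pi)$, producing the desired $V\in\mathbf{C}_{AP}(\Pi)$ with $\Ind(\spt(V))\leq k$. There is no real obstacle here beyond verifying that the hypotheses of the cited theorems are met; the only mildly delicate point is ensuring that the index bound from the equivariant min-max matches $k=\dim(\tx)$ rather than some larger number coming from a non-minimal parametrization, but this follows directly from running the min-max on $\tx$ itself (or on a suitably chosen representative in $\tilde{\Pi}$).
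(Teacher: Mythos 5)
Your proposal is essentially the same argument the paper gives (which is stated in just a few lines at the end of Section 1): extract min-max critical points $u_{\ve}$ of $E_{\ve}$ with index at most $k$, apply Theorem \ref{thm interface} to obtain a limit-interface $V\in\mathbf{C}_{AC}(\tilde{\Pi})$, bound $\Ind(\spt V)$ by Theorem \ref{thm index}, and conclude via Theorem \ref{thm critical set}.

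Two small misattributions are worth flagging. First, the bound $\Ind(u_{\ve})\leq k$ does not come from Gaspar \cite{Gaspar} or Hiesmayr \cite{H}; those references give the bound $\Ind(\spt V)\leq\liminf_i\Ind(u_{\ve_i})$ on the limit-interface (Theorem \ref{thm index}), which you use later. The finite-$\ve$ index bound $\Ind(u_{\ve})\leq k$ comes from the $\bbz_2$-equivariant mountain-pass theory (Corollary 10.5 of \cite{gh}, as the paper cites in Section 2.4), applied to the $k$-dimensional family $\tilde{\Pi}$. Second, the uniform upper bound on the energies $E_{\ve_i}(u_{\ve_i})=\bL_{\ve_i}(\tilde{\Pi})$ does not follow from \eqref{width ineq of GG}, which is a lower bound for $\liminf\bL_{\ve}$; it follows from the paper's new inequality \eqref{width ineq} (which is also what guarantees condition \eqref{condition}, needed so that the mountain-pass argument produces critical points for small $\ve$ at all). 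Neither issue changes the structure of the argument, which matches the paper's.
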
\vto
Indeed, by the min-max theory for the Allen-Cahn functional (see Section 2.4), for all sufficiently small \(\ve>0\) there exists a min-max critical point \(u_{\ve}\) of \(E_{\ve}\) (corresponding to the homotopy class \(\tilde{\Pi}\))  \st \(\Ind(u_{\ve})\leq k\). Hence, by Theorem \ref{thm interface}, \ref{thm index} and \ref{thm critical set}, there exists
\[V\in\mathbf{C}_{AC}(\tilde{\Pi})\subset \mathbf{C}_{AP}(\Pi)\]
\st \(\Ind(\spt(V))\leq k.\)

\textbf{Acknowledgements.}  I am very grateful to my advisor Prof. Fernando Codá Marques for many helpful discussions and for his support and guidance. The author is partially supported by NSF grant DMS-1811840.

\section{Notations and Preliminaries}
\subsection{Notations}
Here we summarize the notations which will be frequently used later.

\begin{tabular}{ll}
	\([m]\) & the set \(\{1,2,\dots,m\}\)\\
	$\mathcal{H}^s$ & the Hausdorff measure of dimension $s$ \\
	\(A\;\dot\cup\; B\) & the disjoint union of \(A\) and \(B\)\\
	$int(A),\ov{A}$ & the interior of \(A\), the closure of \(A\) (in a topological space)\\
	\(\cm\) & the space of Caccippoli sets in \(M\)\\
	\(\del A\) & the topological boundary of \(A\) (in a topological space) = \(\ov{A}\setminus int(A)\); \(\del\) will also \\ & denote the boundary of a current or the boundary of a cell in a cell-complex.\\
	\(\delst E\) & the reduced boundary of a Caccioppoli set \(E\)\\
	\(\db{S}\) & the current associated to the rectifiable set \(S\)\\
	\(\md{\Si}\) & the varifold associated to the rectifiable set \(\Si\)\\
	\(\norm{V}\) & the Radon measure associated to the varifold \(V\) \\
	\(B^c\) & the complement of \(B\) in \(M\) i.e. \(M\setminus B\)\\
	\(B(p,r)\) & the geodesic ball centered at \(p\) with radius \(r\)\\
	\(A(p,r,R)\) & the annulus centered at \(p\) with radii \(r<R\)\\
	\(d(-,S) \) & distance from a set \(S \subset (M,g)\) \\
	\(\cN_{\rh}(S)\) & the set of points \(x \in (M,g)\) \st \(d(x,S)\leq \rh\) \\
	\(\cT_{\rh}(S)\) & the set of points \(x \in (M,g)\) \st \(d(x,S)=\rh\) \\
	\(H^1(M)\) & the Sobolev space \(\{f \in L^2(M): \text{distributional derivative } \nabla f \in L^2(M) \} \)
\end{tabular}

\subsection{Varifolds}
Here we will briefly discuss the notion of varifold; further details can be found in Simon's book \cite{Sim}. Given a Riemannian \mf $(M^{n+1},g)$, let $G_nM$ denote the Grassmanian bundle of $n$-dimensional hyperplanes over $M$. An $n$\textit{-varifold} in $M$ is a positive Radon measure on $G_nM$. If $V$ is an \(n\)-varifold and $\mathbf{p}:G_nM \ra M$ is the canonical projection map, $\|V\| = \mathbf{p}_{*}V$ is a Radon measure on $M$; $\|V\|(A)=V(\mathbf{p}^{-1}(A))$. The topology on the space of $n$-varifolds is given by the weak* topology i.e. a net $\{V_i\}_{i \in \cI}$ converges to $V$ if and only if
$$\int _{G_nM} f(x, \omega)dV_i(x, \omega) \ra \int _{G_nM} f(x, \omega)dV(x, \omega)$$
for all $f \in C_c(G_nM)$. If \(\Si \subset M\) is \(n\)-rectifiable and \(\tht : \Si \ra [0,\infty)\) is in \(L^1_{\text{loc}}(\Si,\cH^n)\), the \(n\)-varifold \(\mathbf{v}(\Si,\tht)\) is defined by
\[\mathbf{v}(\Si,\tht)(f)=\int_{\Si}f(x,T_x\Si)\tht(x)\; d\cH^n(x)\]
where \(T_x\Si\) is the approximate tangent space of \(\Si\) at $x$ which exists $\cH^n \mres \Si$ a.e. Such a varifold is called a \textit{rectifiable $n$-varifold}. When $\tht$ is the constant function $1$, $\mathbf{v}(\Si,\tht)$ is denoted by $\md{\Si}$; it is called the \textit{varifold associated to $\Si$.}

If $\varphi : M \ra M$ is a \(C^1\) map and $ V$ is an \(n\)-varifold in \(M\), the push-forward varifold $\varphi_{\#}V$ is defined as follows.
$$(\varphi_{\#}V)(f) = \int _{G_n^{+}M}f\left(\varphi(x),D\varphi|_x(\omega)\right)J\varphi(x, \omega)dV(x, \omega)$$
where 
$$J\varphi(x, \omega) = \left(\det\left(\left(D \varphi(x) \big |_{\omega}\right)^t \circ \left(D\varphi(x) \big |_{\omega}\right)\right)\right)^{1/2}$$
is the Jacobian factor and \[G_n^{+}M=\{(x,\om)\in G_nM: J\vp(x,\om)\neq 0\}.\] If \(V=\mathbf{v}(\Si,\tht)\) is a rectifiable \(n\)-varifold, \(\vp_{\#}V=\mathbf{v}(\vp(\Si),\tilde{\tht})\); \(\tilde{\tht}:\vp(\Si)\ra \bbr\) is defined by \[\tilde{\tht}(y)=\sum_{x \in \vp^{-1}(y)\cap \Si}\tht(x).\]
We denote by \(\cV_n(M)\) the closure of the space of rectifiable \(n\)-varifolds in \(M\) \wrt the above varifold weak topology. The \(\bF\) metric on \(\cV_n(M)\) is defined as follows \cite{Pitts}*{page 66}.
\[\bF(V,W)=\sup \{V(f)-W(f): f\in C_c^{0,1}(G_nM), \md{f}\leq 1, \text{Lip}(f)\leq 1\}.\]
For every \(a>0\), the \(\bF\)-metric topology and the varifold weak topology coincide on the set \(\{V\in \cV_n(M):\norm{V}(M)\leq a\}\).

\subsection{Almgren-Pitts min-max theory}
In this subsection, we will recall some of the definitions in the Almgren-Pitts min-max theory; we refer to the papers by Marques and Neves \cite{MN_Willmore,MN_ricci_positive,MN_index,MN_index_2}, Schoen and Simon \cite{SS} and the book by Pitts \cite{Pitts} for more details. To discuss the Almgren-Pitts min-max theory we need to introduce the following spaces of currents. Let \((M^{n+1},g)\) be a closed Riemannian manifold. \(\mathbf{I}_l(M^{n+1};\bbz_2)\) is the space of \(l\)-dimensional mod \(2\) flat chains in \(M\); we only need to consider \(l=n,n+1\). \(\zn\) denotes the space of flat chains \(T \in \mathbf{I}_n(M;\bbz_2)\) \st \(T=\del U\) for some \(U \in \mathbf{I}_{n+1}(M;\bbz_2)\). For \(T \in \cZ_n(M;\bbz_2)\), \(\md{T}\) stands for the varifold associated to \(T\) and \(\norm{T}\) is the radon measure associated to \(\md{T}\). \(\cF\) and \(\bM\) denote the flat and mass norm on \(\mathbf{I}_l(M;\bbz_2)\). When \(l=n+1\), these two norms coincide. The \(\bF\) metric on the space of currents is defined as follows.
\begin{align*}
	&\bF(U_1,U_2)=\cF(U_1,U_2)+\bF(\md{\del U_1},\md{\del U_2}) \text{ if } U_1, U_2 \in \mathbf{I}_{n+1}(M;\bbz_2);\\
	&\bF(T_1,T_2)=\cF(T_1,T_2)+\bF(\md{T_1},\md{ T_2}) \text{ if } T_1, T_2 \in \mathbf{I}_{n}(M;\bbz_2).
\end{align*}
It is proved in \cite{MN_index_2} that the space \(\mathbf{I}_{n+1}(M;\bF;\bbz_2)\) is contractible and the boundary map \(\del:\mathbf{I}_{n+1}(M;\bF;\bbz_2)\ra \cZ_n(M;\bF;\bbz_2)\) is a two sheeted covering map. By the constancy theorem, if \(U_1,U_2 \in \mathbf{I}_{n+1}(M;\bbz_2)\) \st \(\del U_1=\del U_2\), either \(U_1=U_2\) or \(U_1+U_2=\db{M}\).

Let \(X,\; \Pi\) be as in Section 1. The \textit{Almgren-Pitts width} of the homotopy class \(\Pi\) is defined by
\begin{equation}
\bL_{AP}(\Pi)=\inf_{\Ph \in \Pi}\sup_{x\in X}\left\{\bM(\Ph(x))\right\}.\label{2 def AP width}
\end{equation}
A sequence of maps \(\Ph_i:X \ra \cZ_n(M;\bF;\bbz_2)\) in \(\Pi\) is called a \textit{minimizing sequence} if
\begin{equation*}
\limsup_{i \ra \infty}\sup_{x\in X}\left\{\bM(\Ph_i(x))\right\}=\bL_{AP}(\Pi).
\end{equation*}
The \textit{critical set} of a minimizing \seq \(\{\Ph_i\}\), denoted by \(\mathbf{C}\left(\{\Ph_i\}\right)\), is the set of all varifolds \(V \in \cV_n(M)\) \st \(\norm{V}(M)=\bL_{AP}(\Pi)\) and there exist sequences \(\{i_j\}\subset \{i\}\) and \(\{x_j\} \subset X\) \st
\[\lim_{j \ra \infty}\bF\left(V,\md{\Ph_{i_j}(x_{j})}\right)=0.\]
We define \(\mathbf{C}_{AP}(\Pi)\) to be the set of all varifolds \(V\in \cV_n(M)\) \st \(V \in \mathbf{C}\left(\{\Ph_i\}\right)\) for some minimizing sequence \(\{\Ph_i\}\subset \Pi\), \(V\) is a stationary, integral varifold and \(\spt(V)\) is a closed, minimal hypersurface with optimal regularity (i.e. smooth and embedded outside a singular set of Hausdorff dimension \(\leq n-7\)). The theorem of Almgren-Pitts-Schoen-Simon guarantees that \(\mathbf{C}_{AP}(\Pi)\) is non-empty.\label{section 2.3}

\subsection{Allen-Cahn min-max theory}
We now briefly discuss the min-max theory for the Allen-Cahn functional following the papers by Guaraco \cite{Guaraco} and Gaspar-Guaraco \cite{GG1} where further details can be found. Let \(W:\bbr \ra \bbr\) be a smooth, symmetric, double well potential. More precisely, \(W\) has the following properties. \(W\geq 0\); \(W(-t)=W(t)\) for all \(t \in \bbr\); \(W\) has exactly three critical points \(0,\pm 1\); \(W(\pm 1)=0\) and \(W''(\pm 1)>0\) i.e. \(\pm 1\) are non-degenerate minima; \(0\) is a local maximum. The Allen-Cahn energy (with parameter \(\ve>0\)) is given by 
\[E_{\ve}(u)=\int_M\ve\frac{|\nabla u|^2}{2}+\frac{W(u)}{\ve}.\]
As mentioned earlier, 
\[AC_{\ve}(u):=\ve\De u-\ve^{-1} W'(u)=0\]
if and only if \(u\) is a critical point of \(E_{\ve}\). 

Let \(\xt,\; \tilde{\Pi}\) be as in Section 1. The \textit{\(\ve\)-Allen-Cahn width} of the homotopy class \(\tilde{\Pi}\) is defined by
\begin{equation}
\bL_{\ve}(\tilde{\Pi})=\inf_{h \in \tilde{\Pi}} \sup_{x \in \xt} E_{\ve}(h(x)). \label{2 def AC width}
\end{equation}
A sequence of maps \(h_i:\xt \ra H^1(M)\setminus \{0\}\) in \(\tilde{\Pi}\) is called a \textit{minimizing sequence} for \(E_{\ve}\) if 
\[\limsup_{i \ra \infty}\sup_{x \in \xt}E_{\ve}(h_i(x))=\bL_{\ve}(\tilde{\Pi})\]
\(u\) is called a \textit{min-max critical point} of \(E_{\ve}\) (corresponding to the homotopy class \(\tilde{\Pi}\)) if \(u\) is a critical point of \(E_{\ve}\) with \(E_{\ve}(u)=\bL_{\ve}(\tilde{\Pi})\) and
\[\lim_{i \ra \infty} d_{H^1(M)}\left( u, h_i(\xt)\right) = 0 \]
where \(\{h_i\}\) is a minimizing sequence for \(E_{\ve}\) in \(\tilde{\Pi}\).

As \(W\) is an even function, \(E_{\ve}\) is invariant under the \(\bbz_2\) action on \(H^1(M)\) given by \(u \mapsto -u\) i.e. \(E_{\ve}(u)=E_{\ve}(-u)\). Moreover, as proved in \cite{Guaraco}*{Proposition 4.4}, \(E_{\ve}\) satisfies the Palais-Smale condition for bounded sequences. Hence, as explained in \cite{GG1}, if \(\ve>0\) satisfies
\begin{equation}\label{condition}
\bL_{\ve}(\tilde{\Pi})<E_{\ve}(0)=\frac{W(0)}{\ve}\vol(M,g)
\end{equation}
(which holds for \(\ve\) sufficiently small by \eqref{width ineq}), one can apply Corollary 10.5 of \cite{gh} to the \(\bbz_2\)-homotopic family \(\tilde{\Pi}\) to conclude that there exists a min-max critical point \(u_{\ve}\) of \(E_{\ve}\) (corresponding to the homotopy class \(\tilde{\Pi}\)) \st \(\Ind(u_{\ve})\leq k\). (Here \(k\) is the dimension of the parameter space \(\tx.\)) The restriction on \(\ve\) given by \eqref{condition} is due to the fact that the space \(H^1(M)\setminus\{0\}\) is not complete; \eqref{condition} ensures that a minimizing sequence for \(E_{\ve}\) is bounded away from \(0\).

\subsection{Convergence of the phase interfaces}
Let us define \(F:\bbr \ra \bbr\) and the energy constant \(\si\) as follows.
\begin{equation}
F(a)=\int_{0}^{a}\sqrt{W(s)/2}\; ds;\quad \quad \si=\int_{-1}^{1}\sqrt{W(s)/2}\;ds\quad \text{so that}\quad F(\pm 1)=\pm \frac{\si}{2}.\label{2 defn F}
\end{equation}
Let \(u \in C^1(M)\), \(w=F \circ u\). The \(n\)-varifold associated to \(u\) is defined by
\[V[u](A)=\frac{1}{\si}\int_{-\infty}^{\infty}\md{\{w=s\}}(A)\; ds\]
for every Borel set \(A \subset G_nM\). On a closed manifold, if \(AC_{\ve}(u)=0\) and \(u\) is not identically equal to \(\pm 1\), \(\md{u}< 1\) \cite{GG1}*{Lemma 2.2}; in that case, in the definition of \(V[u]\) the integral can be taken over the interval \((-\si/2,\si/2).\)

Building on the work of Hutchinson-Tonegawa \cite{HT}, Tonegawa \cite{t} and Tonegawa-Wickramasekera \cite{TW}, Guaraco \cite{Guaraco} has proved the following theorem.
\begin{thm}[\cite{HT,t,TW,Guaraco}]\label{thm interface}
	Let \(\{u_{i}:M\ra (-1,1)\}_{i=1}^{\infty}\) be a sequence of smooth functions \st
	\begin{itemize}\vspace{-3ex}
		\item[(i)] \(AC_{\ve_i}(u_i)=0\) with \(\ve_i\ra 0\) as \(i \ra \infty\);
		\item[(ii)] There exists \(E_0>0\) and \(I_0\in \mathbb{N}_0\) \st \(E_{\ve_i}(u_i)\leq E_0\) and \(\Ind(u_i)\leq I_0\) for all \(i \in \bbn\).
	\end{itemize}\vspace{-3ex}
Then, there exists a stationary, integral varifold \(V\) \st possibly after passing to a subsequence, \(V[u_i]\ra V\) in the sense of varifolds. Moreover,
\[\norm{V}(M)=\frac{1}{2\si}\lim_{i \ra \infty} E_{\ve_i}(u_i)\]
and \(\spt(V)\) is a closed, minimal hypersurface with optimal regularity.
\end{thm}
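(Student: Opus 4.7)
My plan is to prove the theorem in four stages: varifold compactness, stationarity of the limit, integrality, and regularity via the Morse index bound. For compactness, the energy bound $E_{\ve_i}(u_i)\le E_0$ together with Modica's inequality $\ve|\nabla u|^2/2+W(u)/\ve\ge \sqrt{2W(u)}|\nabla u|=|\nabla(F\circ u)|$ yields
\[
\norm{V[u_i]}(M)=\frac{1}{\si}\int_M|\nabla(F\circ u_i)|\,d\vol\le \frac{1}{2\si}E_{\ve_i}(u_i)\le \frac{E_0}{2\si},
\]
so standard varifold compactness extracts a subsequence with $V[u_i]\to V$ in $\cV_n(M)$. For stationarity, I would test the equation $AC_{\ve_i}(u_i)=0$ against $\langle X,\nabla u_i\rangle$ for an arbitrary smooth vector field $X$ on $M$; integration by parts produces a formula for the first variation $\de V[u_i](X)$ whose dominant error involves the discrepancy $\xi_i:=\ve_i|\nabla u_i|^2/2-W(u_i)/\ve_i$. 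Modica's inequality combined with the PDE shows that $\xi_i$ tends to zero as a signed Radon measure (the vanishing-discrepancy input from Hutchinson--Tonegawa), giving $\de V(X)=0$ for every $X$, hence $V$ is stationary.

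The main obstacle is integrality, which is the core analytic content of \cite{HT}. Here one proves that for $\norm{V}$-a.e. point a monotonicity-type density quantity converges, that blow-ups of the level sets $\{F\circ u_i=s\}$ subconverge to stationary integral cones, and that these cones are in fact multi-planes with integer densities $\tht\in\bbn$. Equipartition of energy (i.e.\ $\xi_i\to 0$ in measure) together with the coarea formula built into the definition of $V[u_i]$ then yields the mass identity $\norm{V}(M)=\tfrac{1}{2\si}\lim_i E_{\ve_i}(u_i)$, since up to the factor $1/\si$ the total mass converts to $\lim_i \int_M \sqrt{2W(u_i)}|\nabla u_i|\,d\vol$, which by equipartition equals $\tfrac12 \lim_i E_{\ve_i}(u_i)$.

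For regularity I would exploit the Morse index bound. By a concentration-compactness argument of Tonegawa (refined by Guaraco), the condition $\Ind(u_i)\le I_0$ forces the second variation of $E_{\ve_i}$ to be non-negative on vector fields supported away from a finite set of at most $I_0$ points; passing to the limit, $V$ is stable as a stationary integral $n$-varifold on $M\setminus \cS$ for some finite set $\cS$. Wickramasekera's regularity theorem \cite{TW} then implies that $\spt(V)\setminus \cS$ is smooth and embedded outside a singular set of Hausdorff dimension $\le n-7$; adjoining the zero-dimensional set $\cS$ preserves this singular dimension bound for $n\geq 2$, giving optimal regularity. The hardest step is integrality: it mixes the monotonicity formula for stationary varifolds with subtle PDE estimates (Modica, the discrepancy measure, and tangent-plane analysis for level sets), and the clean statement conceals a considerable amount of geometric measure theory that occupies much of \cite{HT,t,TW}.
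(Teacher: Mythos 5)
The paper cites Theorem \ref{thm interface} from \cite{HT,t,TW,Guaraco} without reproducing its proof, so there is no internal argument in this paper to compare against; but your four-stage decomposition (compactness from the energy bound, stationarity via vanishing discrepancy, integrality via Hutchinson--Tonegawa's blow-up analysis, and regularity via index-bound stability plus Wickramasekera's theory) is indeed the architecture of those references, and your account of the first three stages is essentially accurate.

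There is, however, a genuine gap in the regularity step. You conclude by adjoining the finite bad set $\cS$ to the singular set of $\spt(V)\setminus \cS$, asserting that ``adjoining the zero-dimensional set $\cS$ preserves this singular dimension bound for $n\geq 2$.'' That is false precisely in the range $2 \le n \le 6$: there the bound $\le n-7$ is negative, so optimal regularity means the singular set is \emph{empty}, and a nonempty finite $\cS$ has Hausdorff dimension $0>n-7$, violating the bound. The argument actually used in Tonegawa, Tonegawa--Wickramasekera and Guaraco is different: one extends the stability inequality, established a priori only on $M\setminus\cS$, across the finite set $\cS$ by a logarithmic cutoff argument, because a point has zero $2$-capacity on an $n$-dimensional hypersurface when $n\ge 2$; the limit varifold is therefore stable \emph{globally}, and one must additionally invoke the Tonegawa--Wickramasekera argument that rules out classical (transverse-sheet) singularities before Wickramasekera's regularity theorem \cite{w} applies. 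Only then does one obtain optimal regularity of $\spt(V)$ on all of $M$. Your shortcut of absorbing $\cS$ into the allowed singular set fails in low dimensions and never establishes smoothness at the points of $\cS$. (A small slip in the compactness step: since $F'=\sqrt{W/2}$ one has $|\nabla(F\circ u)|=\sqrt{W(u)/2}\,|\nabla u|$, and the AM--GM inequality reads $\ve|\nabla u|^2/2+W(u)/\ve\ge \sqrt{2W(u)}\,|\nabla u|=2|\nabla(F\circ u)|$, not $\sqrt{2W(u)}|\nabla u|=|\nabla(F\circ u)|$; your displayed conclusion $\norm{V[u_i]}(M)\le E_0/(2\si)$ is nonetheless correct.)
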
\vto
The proof of the regularity of the limit-interface depends on the regularity theory of stable, minimal hypersurfaces developed by Wickramasekera \cite{w}.

The upper bound for the Morse index of the limit-interface in the above theorem was proved by Gaspar \cite{Gaspar} and Hiesmayr \cite{H} (when the limit-interface is two sided).

\begin{thm}[\cite{Gaspar,H}]\label{thm index}
	The Morse index of the limit-interface \(\spt(V)\) in the above Theorem \ref{thm interface} is less than or equal to \(I_0\).
\end{thm}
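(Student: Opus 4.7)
The plan is to proceed by contradiction: suppose $\Ind(\spt(V)) \geq I_0+1$ and use the negative directions of the Jacobi form on $\Sigma := \spt(V)$ to produce $I_0+1$ independent negative directions for $\delta^2 E_{\ve_i}(u_i)$ when $i$ is large, contradicting the uniform index bound. Since the singular set of $\Sigma$ has Hausdorff dimension at most $n-7$, a standard capacity cut-off (valid since $n-7 < n-2$) allows one to select $I_0+1$ smooth sections $f_1,\dots,f_{I_0+1}$ of the normal bundle, supported in a compact subset of the regular part (or of its orientable double cover, in case $\Sigma$ is one-sided), on which the Jacobi form
\[Q_\Sigma(f,f) = \int_\Sigma |\nabla^\Sigma f|^2 - (|A|^2+\text{Ric}(\nu,\nu))\,f^2\, d\cH^n\]
is strictly negative definite.

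The main construction is to lift the $f_j$'s to test functions for the Allen-Cahn Hessian. Extend each $f_j\nu$ to a compactly supported vector field $X_j$ on $M$ and set $\varphi_j^i := X_j\cdot\nabla u_i$. This is the infinitesimal generator of the reparametrization $u_i\circ \phi^{X_j}_t$; on the regular part of $\Sigma$, the vector $\nabla u_i$ aligns with $\nu$ and concentrates with the one-dimensional heteroclinic profile, so $\varphi_j^i$ really does detect the normal deformation encoded by $f_j$. Using $AC_{\ve_i}(u_i)=0$ and commuting the Laplacian with $X_j\cdot\nabla$, the Hessian $\delta^2 E_{\ve_i}(u_i)(\varphi_j^i,\varphi_k^i) = \int_M \ve_i \nabla\varphi_j^i\cdot\nabla\varphi_k^i + \ve_i^{-1}W''(u_i)\varphi_j^i\varphi_k^i$ can be rewritten as an integral pairing first derivatives of $X_j,X_k$ against the Allen-Cahn stress-energy tensor, plus an ambient curvature contribution of the form $\int_M \text{Ric}(X_j,X_k)\bigl(\ve_i|\nabla u_i|^2/2 + W(u_i)/\ve_i\bigr)$.

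The key convergence statement is then that, as a matrix-valued Radon measure, $(\ve_i|\nabla u_i|^2/2 + W(u_i)/\ve_i)\,g - \ve_i \nabla u_i \otimes \nabla u_i$, divided by $2\si$, converges to the tangential projection $(g-\nu\otimes\nu)\,\|V\|$. This follows from the varifold convergence $V[u_i]\to V$ in Theorem \ref{thm interface} combined with the Hutchinson-Tonegawa equipartition of energy and the fact that the tangent planes of $V[u_i]$ concentrate along $T_x\Sigma$ on the regular part. Passing to the limit gives
\[\lim_{i\to\infty}\frac{1}{2\si}\,\delta^2 E_{\ve_i}(u_i)(\varphi_j^i,\varphi_k^i) = \int_\Sigma \theta\, Q_\Sigma(f_j,f_k)\, d\cH^n,\]
where $\theta$ is the integer multiplicity of $V$ along $\Sigma$. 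The right-hand side is strictly negative definite on $\mathrm{span}(f_1,\dots,f_{I_0+1})$, and a parallel calculation shows the $L^2$-Gram matrix $\int_M \varphi_j^i \varphi_k^i$ is non-degenerate in the limit, so the $\varphi_j^i$ span an $(I_0+1)$-dimensional subspace of $H^1(M)$ on which $\delta^2 E_{\ve_i}(u_i)$ is strictly negative for $i$ large. This forces $\Ind(u_i)\geq I_0+1$, the desired contradiction.

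The main obstacle lies in the passage to the limit in the second variation: weak varifold convergence $V[u_i]\to V$ is not enough, one really needs the Hutchinson-Tonegawa control on the level sets of $u_i$ on the regular part of $\Sigma$, so that $\nabla u_i/|\nabla u_i|\to \nu$ in a strong enough sense to identify the matrix-valued limit of the stress-energy tensor. The one-sided and higher-multiplicity cases add a genuine technical layer: one must work on the orientable double cover of the regular part and arrange the $X_j$'s so that sheet-wise contributions add rather than cancel, which is exactly why Gaspar's general argument is more involved than Hiesmayr's two-sided, multiplicity-one case.
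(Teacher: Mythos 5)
The paper does not prove this theorem; it simply cites it from Gaspar and Hiesmayr, so there is no in-paper proof to compare against. Your sketch correctly identifies the high-level strategy of those works: take negative Jacobi directions supported away from the singular set (via a capacity cut-off, valid since $\mathcal{H}^{n-2}(\text{sing})=0$), lift them to Allen-Cahn test functions of the form $\varphi^i_j = X_j\cdot\nabla u_i$ (so that $\delta^2 E_{\ve_i}(u_i)(\varphi^i_j,\cdot)$ is a second inner variation along the flow of $X_j$), and pass to the limit using Hutchinson-Tonegawa equipartition together with the varifold convergence of Theorem~\ref{thm interface}. Using ambient vector fields rather than sections of the normal bundle is also the right way to treat the one-sided case.

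There is, however, a genuine gap in the central limiting step. The identity
\[
\lim_{i\to\infty}\frac{1}{2\si}\,\delta^2 E_{\ve_i}(u_i)\!\left(\varphi^i_j,\varphi^i_k\right) = \int_{\Sigma} \theta\, Q_{\Sigma}(f_j,f_k)\, d\cH^n
\]
is not what the second-inner-variation theorem of Le (and its generalization by Gaspar to non-minimizing solutions) actually gives. The correct limit carries an extra \emph{non-negative} term, of the schematic form $\int_{G_nM}\langle (\nabla X)\,\nu,\nu\rangle^2\,dV(x,S)$, coming from the part of $\nabla X$ that is normal-normal relative to the tangent plane. Because this term has a favorable sign for the opposite inequality, negativity of the area Hessian on $\mathrm{span}(f_1,\dots,f_{I_0+1})$ does not by itself force negativity of the Allen-Cahn Hessian for large $i$. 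The real technical content of Gaspar's and Hiesmayr's arguments is in killing this extra term: one must choose the ambient extension $X_j$ of $f_j\nu$ so that $\langle\nabla_{\nu}X_j,\nu\rangle = 0$ along $\Sigma$ (for instance, by extending $f_j$ to be constant and $\nu$ to be parallel along normal geodesics), and then verify that the resulting $\varphi^i_j$ still span an $(I_0+1)$-dimensional space on which the Hessian is eventually negative. Your sketch leaves this out, which is precisely the step that separates the heuristic from a proof. A secondary, smaller point: the contrast you draw between Gaspar and Hiesmayr (``arrange the $X_j$'s so that sheet-wise contributions add rather than cancel'') is not quite the issue; an ambient vector field automatically moves all sheets coherently, so multiplicity enters only as the weight $\theta$. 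The actual distinction is that Hiesmayr treats the two-sided case where $\nu$ and hence $f_j$ are globally defined on $\Sigma$, while Gaspar's use of ambient vector fields handles the one-sided case without passing to sections of a twisted normal bundle.
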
\vth
Lastly, we introduce the following definition. \(\mathbf{C}_{AC}(\tilde{\Pi})\) is the set of all stationary, integral \(n\)-varifolds \(V\) \st \(\spt(V)\) is a closed, minimal hypersurface with optimal regularity and \(V\) is the varifold limit of \(V[u_i]\) for some \seq \(\{u_i\}_{i=1}^{\infty}\) \st \(u_i\) is a min-max critical point of \(E_{\ve_i}\) (with \(\ve_i\ra 0\)) corresponding to the homotopy class \(\tilde{\Pi}\). By the discussion of Section 2.4 and Theorem \ref{thm interface}, \(\mathbf{C}_{AC}(\tilde{\Pi})\) is non-empty.\label{section 2.5}

\section{Proof of the width inequality}
In this section we will prove our main Theorem \ref{thm main thm}. Let us fix \(\et >0\). Let \(L=\bL_{AP}(\Pi)\). By the interpolation theorems of Pitts and Marques-Neves \cite{MN_ricci_positive,MN_index_2} there exists  $\Ph:X \ra \mathcal{Z}_n(M^{n+1}; \bM;\bbz_2)$ such that
\begin{equation}
	\sup_{x \in X}\{\bM(\Ph(x))\}<L+\et. \label{eq mass of Phi is bounded}
\end{equation}
We choose \(\tph:\tx \ra \cm\) which is a lift of \(\Ph\) i.e. for all \(x \in \xt\),
\[\db{\del^{*}\tph(x)}=\del \db{\tph(x)}=\Ph(\pi(x)).\]
\(\tph\) is \(\bbz_2\)-equivariant i.e. if \(T:\tx \ra \tx\) is the deck transformation, \(\db{\tph(x)}+\db{\tph(T(x))}=\db{M}\) for all \(x \in \tx\).
\subsection{Approximation of a Caccippoli set by open sets with smooth boundary}
In this subsection, following the book by Giusti \cite{giu} and the paper by Miranda-Pallara-Paronetto-Preunkert \cite{MPPP}, we briefly discuss the fact that a Caccioppoli set can be approximated by open sets with smooth boundary. We begin with the following theorem.
\begin{thm}[\cite{giu}*{Theorem 1.17}, \cite{MPPP}*{Proposition 1.4}]\label{thm 3.2}
	Let \(E\in \cm\). There exists a sequence of smooth functions \(\{f_j:M \ra \bbr\}_{j=1}^{\infty}\) \st \(0\leq f_j \leq 1\) for all \(j\) and
	\[\lim_{j \ra \infty}\int_M|f_j -\ch_E|\;d\cH^{n+1} =0 \quad \text{ and } \quad \int_M|D\ch_E|=\lim_{j\ra \infty}\int_M|Df_j|.\]
\end{thm}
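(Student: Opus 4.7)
The plan is to treat this as the classical Meyers--Serrin strict approximation theorem for $BV$ functions, transplanted from Euclidean space to a Riemannian manifold via charts and a partition of unity. The lower-semicontinuity bound $\int_M|D\chi_E|\le\liminf_j\int_M|Df_j|$ comes for free from standard $BV$ theory once the $L^1$ convergence is established, so the substance of the argument is to build smooth $f_j$ with $0\le f_j\le 1$ that converge to $\chi_E$ in $L^1$ and that additionally satisfy the limsup bound $\limsup_j\int_M|Df_j|\le\int_M|D\chi_E|$.

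First I would fix a finite atlas $\{(U_\alpha,\phi_\alpha)\}_{\alpha=1}^N$ on $M$ together with a subordinate smooth partition of unity $\{\varphi_\alpha\}$, arranging the $U_\alpha$ to be small enough that the Riemannian metric is close to the Euclidean one on each chart, up to a multiplicative distortion that we can make arbitrarily close to $1$ by refinement. Each piece $\varphi_\alpha\chi_E$ is then a compactly supported $BV$ function in Euclidean coordinates. Convolving with a standard Euclidean mollifier $\rho_{\delta_\alpha}$ produces a smooth $g_{\alpha,\delta_\alpha}$ satisfying $0\le g_{\alpha,\delta_\alpha}\le\varphi_\alpha$, with $g_{\alpha,\delta_\alpha}\to \varphi_\alpha\chi_E$ in $L^1$ as $\delta_\alpha\to 0$. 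Pulling back, summing over $\alpha$, and applying the $1$-Lipschitz truncation $T(t)=\min\{1,\max\{0,t\}\}$ yields candidates $f_j=T\!\left(\sum_\alpha g_{\alpha,\delta_{\alpha,j}}\right)\in[0,1]$; truncation does not affect $L^1$ limits and does not increase total variation (by the coarea formula), so $f_j\to\chi_E$ in $L^1$ follows from $\sum_\alpha\varphi_\alpha=1$.

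The main obstacle is the limsup inequality. Bounding $\int_M|Df_j|\le\sum_\alpha\int|Dg_{\alpha,\delta_{\alpha,j}}|$ and sending $\delta_{\alpha,j}\to 0$ naively produces $\sum_\alpha\int|D(\varphi_\alpha\chi_E)|$, which contains unwanted cross terms from the Leibniz rule $D(\varphi_\alpha\chi_E)=\varphi_\alpha D\chi_E+\chi_E\,d\varphi_\alpha$; these do not cancel pointwise even though $\sum_\alpha d\varphi_\alpha\equiv 0$ holds as a vector identity. The key trick, lifted from the Anzellotti--Giaquinta refinement of Meyers--Serrin, is to choose the scales $\delta_{\alpha,j}$ coordinated across $\alpha$, small enough relative to the spacing of the partition of unity, so that $\sum_\alpha\rho_{\delta_{\alpha,j}}*(\varphi_\alpha\chi_E)$ differs from a single global mollification $\rho_{\delta_j}*\chi_E$ by a function whose total variation is $O(1/j)$. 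Combined with the standard non-expansiveness estimate $\int|D(\rho_{\delta_j}*\chi_E)|\le\int|D\chi_E|+o(1)$ on a slight enlargement of the support, and with the $1+o(1)$ metric-distortion factor from the atlas, this delivers $\limsup_j\int_M|Df_j|\le\int_M|D\chi_E|$. Together with lower semicontinuity this forces equality of the limits, completing the proof.
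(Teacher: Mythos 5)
The paper offers no proof of this theorem; it is quoted verbatim from Giusti (Theorem 1.17, Euclidean) and Miranda--Pallara--Paronetto--Preunkert (Proposition 1.4, manifold). The latter reference actually works directly on $M$ via the heat semigroup: $u_t := e^{t\Delta}\chi_E$ is smooth, lies in $[0,1]$ by the maximum principle, converges to $\chi_E$ in $L^1$, and has $\int_M|\nabla u_t| \to \int_M|D\chi_E|$ as $t\to 0^+$, which sidesteps the partition-of-unity cross-term bookkeeping entirely. Your proposal instead transplants Giusti's Euclidean mollification argument; that is a legitimate alternative route, but two steps in your sketch would fail as written.

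First, $T(t)=\min\{1,\max\{0,t\}\}$ is not smooth, so $f_j=T\big(\sum_\alpha g_{\alpha,\delta_{\alpha,j}}\big)$ is only Lipschitz, whereas the theorem asks for smooth $f_j$; you need a smooth $1$-Lipschitz surrogate $T_\eta$ with range in $[0,1]$ and a diagonal argument in $\eta$ and $j$. Second, and more seriously, your mechanism for the limsup bound --- comparing $\sum_\alpha\rho_{\delta_{\alpha,j}}*(\varphi_\alpha\chi_E)$ to ``a single global mollification $\rho_{\delta_j}*\chi_E$'' and asserting the total-variation difference is $O(1/j)$ --- has no content on a closed manifold: there is no translation-invariant mollification of $\chi_E$ to compare against, since each convolution lives in its own coordinate chart. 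The actual Anzellotti--Giaquinta mechanism is an $L^1$ cancellation estimate, not a comparison to a global object: in chart coordinates, $Dg_{\alpha,\delta}=\rho_\delta*(\varphi_\alpha D\chi_E)+\rho_\delta*(\chi_E\,d\varphi_\alpha)$; the first family of terms has total mass $\le\int_M|D\chi_E|$ up to the chart-distortion factor you already account for, while the second family, using $\sum_\alpha\chi_E\,d\varphi_\alpha\equiv 0$, equals $\sum_\alpha\big[\rho_{\delta_\alpha}*(\chi_E\,d\varphi_\alpha)-\chi_E\,d\varphi_\alpha\big]$, whose $L^1$ norm tends to $0$ as every $\delta_\alpha\to 0$. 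No coordination among the $\delta_\alpha$ beyond all being small is needed. With these two repairs your outline becomes a correct proof.
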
\vspace{-4ex}
Following \cite{giu}*{Proof of Theorem 1.24}, for \(t\in(0,1)\), let us define \(E_{j,t}=\{f_j>t\}\). Then,
\begin{equation}
|f_j-\ch_E|> t \text{ on }  E_{j,t}\setminus E \quad \text{  and  } \quad |f_j-\ch_E|\geq 1-t \text{ on } E \setminus E_{j,t}  \label{eq 3.2.1}
\end{equation} 
which implies
\begin{equation}
\int_M|f_j-\ch_{E}|\;d\cH^{n+1}\geq \min \{t, 1-t\}\int_M |\ch_{E_{j,t}}-\ch_E|\;d\cH^{n+1}. \label{eq 3.2.2}
\end{equation}
Hence, for all \(t\in (0,1)\),
\begin{align}
&\lim_{j\ra \infty} \int_M |\ch_{E_{j,t}}-\ch_E|\;d\cH^{n+1} =0 \label{eq: convergence in L1 }\\
&\Longrightarrow \int_M|D\ch_E| \leq \liminf_{j \ra \infty}\int_M |D\ch_{E_{j,t}}|.\label{eq 3.2.3}
\end{align}
\tf using Theorem \ref{thm 3.2}, the co-area formula for the BV function and \eqref{eq 3.2.3} we obtain the following inequalities.
\[\int_M |D\ch_E|=\lim_{j\ra \infty}\int_M|Df_j|\geq \int_{0}^{1}\left(\liminf_{j \ra \infty}\int_M |D\ch_{E_{j,t}}|\right)dt\geq \int_M|D\ch_E|.\]
This implies
\begin{equation}
\liminf_{j \ra \infty}\int_M |D\ch_{E_{j,t}}|= \int_M|D\ch_E| \text{ for a.e. } t\in (0,1). \label{eq: convergence of liminf}
\end{equation}
We choose \(t_0\in(0,1)\) \st \(t_0\) is a regular value of \(f_j\) for all \(j\) and \eqref{eq: convergence of liminf} holds for \(t=t_0\). Further, possibly after passing to a subsequence, we can assume that 
\begin{equation}
\lim_{j \ra \infty}\int_M |D\ch_{E_{j,t_0}}|= \int_M|D\ch_E|. \label{eq: convergence of lim}
\end{equation}
Let us define \(E_j=\ov{E}_{j,t_0}\). Since
\[E_{j,t_0} \subset \ov{E}_{j,t_0} \subset E_{j,t_0} \cup \{f_j=t_0\}, \]
we have \(\cH^{n+1}\left(\ov{E}_{j,t_0} \setminus E_{j,t_0} \right).\) From \eqref{eq: convergence in L1 } and \eqref{eq: convergence of lim} we conclude that
\begin{equation}
\ch_{E_j} \ra \ch_E \text{ in } L^1(M) \text{  and  } \lim_{j \ra \infty}\int_M |D\ch_{E_{j}}|= \int_M|D\ch_E|. \label{eq: convergence in F}
\end{equation}
By \cite{Pitts}*{2.1(18)(f), page-63}, \eqref{eq: convergence in F} implies that \([\![\del^{*} E_j]\!]\) converges to \([\![\del^{*}E]\!]\) in \(\bF\).

Let us fix \(p \in M\) and \(R>0\). Using \eqref{eq: convergence in F},
\[\lim_{j\ra \infty}\int_{0}^{R}\bigg(\int_{\del B(p,t)}|\ch_{E_j}-\ch_E| \; d\cH^n\bigg)dt = \lim_{j \ra \infty}\int_{B(p,R)}|\ch_{E_j}-\ch_E| \; d\cH^{n+1}=0.\]
Hence, there exists a subsequence \(\{\ch_{E_{j_s}}\} \subset \{\ch_{E_j}\}\) \st 
\[\lim_{s \ra \infty}\int_{\del B(p,t)}|\ch_{E_{j_s}}-\ch_E| \; d\cH^n =0 \text{ for a.e. } t \in (0,R).\]

Next, we define \(F_j=\ov{\{f_j< t_0\}}\) and \(F=M\setminus E\). Then, \(E_j \cap F_j\subset \{f_j=t_0\} \) and \(E_j \cup F_j=M\). Therefore, \([\![E_j]\!]+[\![F_j]\!]=[\![M]\!]\), \(\del^{*}E_j=\del^{*}F_j\) and \(\ch_{F_j} \ra \ch_F\) in \(L^1(M)\). We note that \(\del E_j,\; \del F_j \subset \{f_j=t_0\}\). As the reduced boundary is a subset of the topological boundary, we also have \(\del^{*}E_j=\del^{*}F_j\subset \{f_j=t_0\}.\) Since \(\{f_j=t_0\}\) is a smooth, closed \hy in \(M\), for each \(a \in \{f_j=t_0\}\) there exist \(\rh>0\) and co-ordinates \(\{x_1,x_2,\dots , x_{n+1}\}\) on \(B(a,\rh)\) \st \(x_i(a)=0\) for all \(i\) and
\[\left(B(a,\rh)\cap \{f_j=t_0\}\right)=\{x \in B(a,\rh):x_{n+1}=0\}.\]
Let \(G_1=\{x \in B(a,\rh):x_{n+1}<0\}\) and \(G_2=\{x \in B(a,\rh):x_{n+1}>0\}\) so that
\[\left(B(a,\rh)\setminus \{f_j=t_0\}\right)=G_1\; \dot\cup \; G_2 .\]
\(G_1\) and \(G_2\) are connected open sets. We have the following three mutually exclusive cases.
\begin{itemize}\vspace{-3ex}
	\item[(1)] \(f_j >t_0\) on both \(G_1\) and \(G_2\); this implies \(a \in int(E_j)\setminus F_j\);
	\item[(2)] \(f_j <t_0\) on both \(G_1\) and \(G_2\); this implies \(a \in int(F_j) \setminus E_j\);
	\item[(3)] \(f_j >t_0\) on one of \(G_1\) and \(G_2\), and \(f_j <t_0\) on the other; this implies \(a \in E_j \cap F_j\).
\end{itemize}\vspace{-2ex}
From the above three cases we can conclude that \(\del^{*}E_j=\del^{*}F_j= E_j \cap F_j= \del E_j=\del F_j\) which is a smooth, closed, embedded \hy in \(M\). Indeed, the set of points \(a \in \{f_j=t_0\}\) for which the above item (3) holds is both open and closed in \(\{f_j=t_0\}\); hence it is the union of certain connected components of \(\{f_j=t_0\}.\)

From the above discussion we arrive at the following proposition.
\begin{pro}\label{pro: modified smooth approximation}
	Let \(E \in \cm\) and \(F=M \setminus E\). Then, for each \(j \in \bbn\) there exist closed sets \(E_j,\; F_j \in \cm\) \st the followings hold.
	\begin{itemize}\vspace{-3ex}
		\item[(i)]\([\![E_j]\!]+[\![F_j]\!]=[\![M]\!]\) and \(M=E_j \cup F_j\).
		\item[(ii)] \(\ch_{E_{j}} \ra \ch_{E}\) and \(\ch_{F_{j}} \ra \ch_{F}\) in \(L^1(M)\).
		\item[(iii)] \(\del^{*}E_j=\del^{*}F_j= E_j \cap F_j= \del E_j=\del F_j\) is a smooth, closed, embedded \hy in \(M\).
		\item[(iv)] \([\![\del^{*} E_j]\!]=[\![\del^{*} F_j]\!]\) converges to \([\![\del^{*}E]\!]=[\![\del^{*}F]\!]\) in \(\bF\).
		\item[(v)] For every \(p \in M\) and \(R>0\) there exist subsequences \(\{\ch_{E_{j_s}}\} \subset \{\ch_{E_j}\}\) and \(\{\ch_{F_{j_s}}\} \subset \{\ch_{F_j}\}\) \st 
		\[\lim_{s \ra \infty}\int_{\del B(p,t)}|\ch_{E_{j_s}}-\ch_E| \; d\cH^n =0 \; \text{ and }\; \lim_{s \ra \infty}\int_{\del B(p,t)}|\ch_{F_{j_s}}-\ch_F| \; d\cH^n =0\]
		for a.e.  \(t \in (0,R).\)
	\end{itemize}\vspace{-2ex}
\end{pro}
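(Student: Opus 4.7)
My plan is to regularize the characteristic function $\ch_E$ by smooth approximants and then take thickened superlevel and sublevel sets. Specifically, I would apply Theorem \ref{thm 3.2} to obtain smooth $f_j : M \to [0,1]$ converging to $\ch_E$ in $L^1$ and with total-variation convergence to the perimeter of $E$, and then set $E_j := \ov{\{f_j > t_0\}}$ and $F_j := \ov{\{f_j < t_0\}}$ for a carefully chosen threshold $t_0 \in (0,1)$. Most of the analytic content is already assembled in the paragraphs preceding the statement; the proposition is really a packaging of those computations.

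The key step is the selection of $t_0$. From the elementary comparison \eqref{eq 3.2.2} one gets, for every fixed $t \in (0,1)$, the $L^1$ convergence $\ch_{E_{j,t}} \to \ch_E$; combining with the co-area formula and Theorem \ref{thm 3.2} one obtains the liminf perimeter convergence \eqref{eq: convergence of liminf} for a.e. $t$. Sard's theorem, applied separately to each of the countably many $f_j$, shows that the set of common regular values has full Lebesgue measure in $(0,1)$; intersecting with the full-measure set on which \eqref{eq: convergence of liminf} holds yields an admissible $t_0$, after which a diagonal subsequence upgrades the liminf to a genuine limit \eqref{eq: convergence of lim}. With $t_0$ so chosen, $\{f_j = t_0\}$ is a smooth closed embedded hypersurface for every $j$.

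Properties (i) and (ii) are then immediate: $M = \{f_j > t_0\} \cup \{f_j = t_0\} \cup \{f_j < t_0\}$ with the middle piece $\cH^{n+1}$-null, giving $\db{E_j} + \db{F_j} = \db{M}$ and the $L^1$ convergences. Property (iv) follows from \eqref{eq: convergence in F} via \cite{Pitts}*{2.1(18)(f), page-63}; (v) comes from writing the $L^1$ convergence on $B(p,R)$ in geodesic polar coordinates and extracting a subsequence so that the slice integrands converge for a.e. radius. For (iii), I would run the local analysis near each $a \in \{f_j = t_0\}$: in normal coordinates the complement of $\{f_j = t_0\}$ in a small ball $B(a,\rh)$ splits into two connected open pieces $G_1, G_2$, yielding three mutually exclusive sign patterns for $f_j - t_0$, and only the mixed pattern places $a$ in $\del E_j \cap \del F_j$. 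The set of such mixed points is both open and closed in $\{f_j = t_0\}$, hence a union of connected components, so $\delst E_j = \delst F_j = \del E_j = \del F_j = E_j \cap F_j$ is a smooth closed embedded hypersurface.

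The main obstacle is really the simultaneous choice of $t_0$ that handles every requirement at once: regularity for each $f_j$, perimeter limit, and compatibility with the slicing needed for (v). Since each requirement excludes only a Lebesgue-null set of thresholds in $(0,1)$, their intersection is generic and a valid $t_0$ exists; once it is fixed, the rest of the verification is a clean bookkeeping argument assembling the inequalities already derived.
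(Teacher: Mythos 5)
Your proposal is correct and follows essentially the same route as the paper: smoothing via Theorem \ref{thm 3.2}, choosing a common regular value $t_0$ for which the perimeter liminf is achieved (Sard plus the co-area/lower-semicontinuity sandwich), defining $E_j, F_j$ as the closed super/sublevel sets, and running the same local sign-pattern analysis near $\{f_j=t_0\}$ for (iii). The only small overstatement is that $t_0$ need not be tuned to accommodate (v) --- the slicing subsequence there is extracted afterwards, for each fixed $p$ and $R$, purely from the already-established $L^1$ convergence.
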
\vspace{-2ex}

\subsection{Preliminary constructions}
Let \(D\) be a countable, dense subset of \(M\) and \(\text{inj}(M)\) be the injectivity radius of \((M,g)\). We consider
\[\mathscr{B}=\{B(p,t): p\in D, t \in (0, \text{inj}(M))\cap \mathbb{Q}\}\]
which is also a countable set. Let us assume that \(M\) is isometrically embedded in some Euclidean space \(\bbr^m\). We have the following theorem which is a consequence of Sard's theorem. 
\begin{thm}(\cite{Nic}*{Corollary 1.25})
	Let \(\Si\) be a closed submanifold of \(\bbr^m\). For \(v \in \bbr^m\) we define \(f_v: \bbr^m \ra \bbr\) by \(f_v(x)=\langle x, v\rangle\). Then, there exists a generic set \(V \subset \bbr^m\) (depending on \(\Si\)) \st for all \(v \in V\), \(f_v\vert_{\Si}\) is a Morse function on \(\Si\). 
\end{thm}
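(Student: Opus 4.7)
The plan is to recognize the ``bad'' vectors $v \in \bbr^m$ as the critical values of an auxiliary smooth map defined on the normal bundle of $\Si$ in $\bbr^m$, and then invoke Sard's theorem. Let $N\Si = \{(x,\nu) \in \Si \times \bbr^m : \nu \perp T_x\Si\}$ be the total space of that normal bundle, a smooth \mf of dimension $\dim\Si + (m - \dim\Si) = m$. I would define
\[
\Psi : N\Si \ra \bbr^m, \qquad \Psi(x,\nu) = \nu,
\]
where $\nu$ is regarded as a vector in $\bbr^m$. Since $\dim N\Si = \dim \bbr^m = m$, Sard's theorem gives that the set $V$ of regular values of $\Psi$ is generic in $\bbr^m$ (residual and of full measure).

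I would then check that for $v \in V$, $f_v|_{\Si}$ is a Morse function. Since $\nabla_{\bbr^m} f_v = v$, a point $x \in \Si$ is a critical point of $f_v|_{\Si}$ iff the tangential projection of $v$ vanishes, i.e.\ iff $v \in N_x\Si$, i.e.\ iff $(x,v) \in \Psi^{-1}(v)$. Thus critical points of $f_v|_{\Si}$ correspond bijectively to elements of $\Psi^{-1}(v)$, and it suffices to show that each such critical point is non-degenerate whenever $v$ is a regular value of $\Psi$.

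To analyse $d\Psi$, I would pick a local orthonormal frame $e_1,\dots,e_k$ of $N\Si$ near a fixed $x_0$ (where $k = m - \dim\Si$), so that the total space is locally parametrized by $(x,\al) \mapsto (x,\sum_i \al_i e_i(x))$ and $\Psi(x,\al) = \sum_i \al_i e_i(x)$. Writing $v = \sum_i \al_{0,i} e_i(x_0)$, one gets $\partial_{\al_j}\Psi = e_j(x_0)$ and $\partial_X \Psi = \sum_i \al_{0,i} D_X e_i(x_0)$ for $X \in T_{x_0}\Si$. The $\al$-derivatives already span $N_{x_0}\Si$, so surjectivity of $d\Psi_{(x_0,v)}$ is equivalent to the tangential components $(D_X v)^{\top} = -A_v(X)$ spanning $T_{x_0}\Si$, where $A_v : T_{x_0}\Si \to T_{x_0}\Si$ is the Weingarten operator in the direction $v$. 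A short computation using the linearity of $f_v$ yields $\mathrm{Hess}(f_v|_\Si)_{x_0}(X,Y) = \langle II_{x_0}(X,Y), v \rangle = \langle A_v X, Y \rangle$, so $A_v$ being an isomorphism is exactly the Morse non-degeneracy condition at $x_0$. Hence $v \in V$ forces every critical point of $f_v|_\Si$ to be non-degenerate.

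The only subtle point is the double identification of the Weingarten operator $A_v$, both with the obstruction to $d\Psi$ being surjective and with the Hessian of $f_v|_\Si$; once this coincidence is observed, the theorem reduces cleanly to Sard's theorem for the equi-dimensional smooth map $\Psi$, with no further serious obstacle.
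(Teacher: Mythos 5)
Your proof is correct, and it is the standard argument. A small caveat on framing: the paper does not give its own proof of this statement — it simply cites it as \cite{Nic}*{Corollary 1.25} from Nicolaescu's book. The proof there (and in most references) is the one you reconstructed: realize the ``bad'' covectors as critical values of the endpoint map $\Psi(x,\nu)=\nu$ on the total space of the normal bundle $N\Si$, which has full dimension $m$, and apply Sard. The key computation is exactly the double identification of the Weingarten operator $A_v$, once as the obstruction to surjectivity of $d\Psi$ at a point $(x_0,v)\in\Psi^{-1}(v)$ (since the fiber directions already fill out $N_{x_0}\Si$, surjectivity reduces to $X\mapsto -(D_X\nu)^{\top}=A_v X$ being onto $T_{x_0}\Si$), and again as the Hessian of $f_v|_\Si$ at a critical point via the Gauss formula, $\mathrm{Hess}(f_v|_\Si)(X,Y)=\langle II(X,Y),v\rangle=\langle A_v X,Y\rangle$. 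Your argument is complete.

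Two minor remarks. First, the sentence attributing the genericity conclusion to $\dim N\Si=m$ slightly misplaces the role of equidimensionality: Sard's theorem gives that the regular values are residual and co-null for any smooth map, regardless of dimensions; equidimensionality is what makes regularity of the value $v$ equivalent to every point of $\Psi^{-1}(v)$ being a local diffeomorphism point, which is what feeds the non-degeneracy argument. Second, if one wants the set $V$ to be \emph{open} and dense (rather than merely residual), one should add that $\Si$ compact makes $\Psi$ proper, so the regular value set is open; this is harmless here since in the paper's application $\Si$ is always a compact submanifold (either $M$ itself or a geodesic sphere $\del B$).
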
\vspace{-2ex}
By the above theorem, there exists \(\om \in \bbr^m\) \st \(f_{\om}\vert_{M}\) and \(f_{\om}\vert_{\del B}\) are Morse functions for all \(B \in \mathscr{B}\). By composing scaling and translation with \(f_{\om}\), we can assume that \(f_{\om}(M)=[1/3,2/3]\). From now on, whenever we will consider \(f_{\om}\), it will be assumed that \(f_{\om}: M \ra [1/3,2/3]\). Let us choose \(r_0\in (0, \text{inj}(M))\) \st
\begin{itemize}\vspace{-3ex}
	\item \(\cH^n\left(f_{\om}^{-1}(t)\cap \overline{B}(p,r_0)\right)<\et/2\) for all \(t \in [1/3,2/3],\; p\in M\);
	\item \(\bM \left(\Ph(x)\mres \overline{B}(p,r_0)\right)<\et\) for all \(x \in X,\; p \in M\);
\end{itemize}\vspace{-2ex}
where \(\Ph\) is as chosen at the beginning of Section 3. Such a choice of \(r_0\) is possible because of the `no concentration of mass property'(\cite{MN_ricci_positive}). We choose \(p_i \in D\) \st
\begin{equation}
	M=\bigcup_{i=1}^{I}B\left(p_i,r_0/4\right) ; \qquad \bbb_i^0 :=B(p_i,r_0).  \label{def: bbb0}
\end{equation}
Hence, in particular, we have
\begin{align}
&\cH^n\left(f_{\om}^{-1}(t)\cap \overline{\bbb_i^0}\right)<\et/2 \;\;\forall t \in [1/3,2/3], i \in [I];\nonumber\\
&\bM \left(\Ph(x)\mres \overline{\bbb_i^0}\right)<\et\;\; \forall x \in X, i \in [I]. \label{eq mass of Phi in Biz} 
\end{align}
\begin{lem}
	There exists \(r_1 \in (r_0/2, 3r_0/4)\cap \mathbb{Q}\), \( \delta \in (0, r_0/8)\) \st 
	\[\bM\left( \Ph(x)\mres \ov{A}(p_i, r_1-2\delta, r_1+\delta)\right)<\frac{\et}{I}\]
	for all \(x \in X\) and \(i \in [I].\)\label{lem: mass of phi in annulus}
\end{lem}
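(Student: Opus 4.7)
The plan is a finite pigeonhole argument combined with the uniform mass-continuity of $\Ph$ on the compact space $X$. The key preliminary is that for every Borel set $K\subset M$ the real-valued function $x\mapsto \bM(\Ph(x)\mres K)$ is continuous on $X$: by subadditivity of mass and the fact that restriction does not increase mass,
\[|\bM(\Ph(x)\mres K)-\bM(\Ph(y)\mres K)|\leq \bM((\Ph(x)-\Ph(y))\mres K)\leq \bM(\Ph(x)-\Ph(y))\longrightarrow 0\]
as $y\to x$, since $\Ph:X\to \cZ_n(M;\bM;\bbz_2)$ is $\bM$-continuous. As $X$ is compact, $\Ph$ is in fact uniformly $\bM$-continuous; set $\tau=\et/(2I)$ and choose $\rho>0$ together with finitely many points $y_1,\dots,y_J\in X$ such that $\{B_X(y_j,\rho)\}_{j=1}^{J}$ covers $X$ and $\bM(\Ph(x)-\Ph(y_j))<\tau$ for every $x\in B_X(y_j,\rho)$.

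Next, pick an integer $N>2I^2J$ and $\delta\in(0,r_0/(12N))$ (so in particular $\delta<r_0/8$), and choose $N$ equispaced radii $s_1,\dots,s_N$ inside $(r_0/2,3r_0/4)$ with consecutive spacing $r_0/(4N)>3\delta$. This spacing forces the closed annuli $\overline{A(p_i,s_k-2\delta,s_k+\delta)}$, $k=1,\dots,N$, to be pairwise disjoint and all contained in $\overline{\bbb_i^0}$ for every $i\in[I]$. For each pair $(i,j)\in[I]\times[J]$, disjointness and \eqref{eq mass of Phi in Biz} give
\[\sum_{k=1}^{N}\bM\bigl(\Ph(y_j)\mres \overline{A(p_i,s_k-2\delta,s_k+\delta)}\bigr)\leq \bM\bigl(\Ph(y_j)\mres \overline{\bbb_i^0}\bigr)<\et.\]
Summing over the $IJ$ pairs yields total mass less than $IJ\et$, so pigeonhole produces a single index $k^*$ for which
\[\bM\bigl(\Ph(y_j)\mres \overline{A(p_i,s_{k^*}-2\delta,s_{k^*}+\delta)}\bigr)<\frac{IJ\et}{N}<\frac{\et}{2I}\quad\text{for every }(i,j).\]
For arbitrary $x\in X$, picking $j$ with $x\in B_X(y_j,\rho)$ and invoking the uniform-continuity estimate upgrades this to
\[\bM\bigl(\Ph(x)\mres \overline{A(p_i,s_{k^*}-2\delta,s_{k^*}+\delta)}\bigr)<\frac{\et}{2I}+\tau=\frac{\et}{I}\quad\text{for every }i\in[I].\]

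Finally, to obtain a rational $r_1$ as required, pick any $r_1\in \mathbb{Q}\cap (s_{k^*}-\delta/4,s_{k^*}+\delta/4)$ and replace $\delta$ by $\delta/2$; the resulting annulus is contained in the previous one, so the bound persists and both conditions $r_1\in(r_0/2,3r_0/4)\cap\mathbb{Q}$ and $\delta/2\in(0,r_0/8)$ are met. The only subtlety is keeping the non-circular parameter order $\tau\to\rho\to J\to N\to\delta$ straight; there is no deeper obstacle, as the argument uses only mass-continuity of $\Ph$, subadditivity of mass, disjointness of the annuli inside each $\bbb_i^0$, and the uniform bound \eqref{eq mass of Phi in Biz}.
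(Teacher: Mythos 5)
Your proof is correct and its logic is sound, but it takes a genuinely different route than the paper. Both proofs begin identically by using compactness (or uniform $\bM$-continuity) of $\Ph$ to reduce the problem to finitely many test currents $\Ph(y_1),\dots,\Ph(y_J)$, and both aim for the bound $\et/(2I)$ on the test currents so that the continuity buffer of $\et/(2I)$ closes the gap. The divergence is in how the radius and $\delta$ are produced. The paper observes that, for each pair $(i,j)$, the set of radii $r'$ for which $\bM(\Ph(x_j)\mres\del B(p_i,r'))\geq \et/(2I)$ is finite (finitely many large atoms of a finite Radon measure), so one may pick a single rational $r_1$ avoiding all of the finitely many bad radii; $\delta$ is then chosen afterwards by continuity from above, shrinking the closed annulus onto the sphere $\del B(p_i,r_1)$. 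You instead fix $\delta$ first, lay down $N$ pairwise disjoint closed annuli inside each $\ov{\bbb_i^0}$, and pigeonhole using \eqref{eq mass of Phi in Biz} to find a single good annulus index $k^*$ (the nonnegativity of the summands makes the ``for every $(i,j)$'' conclusion legitimate), finishing with a small rational perturbation of the center and halving $\delta$ so the new annulus nests inside the old. Your approach is more quantitative and avoids appealing to continuity from above of the measure at the end, at the cost of a final rationalization step and some bookkeeping of $N,\,\delta,\,$and the spacing; the paper's approach is shorter but leans on the atom-counting observation and on outer regularity. Both are valid; the parameter orderings ($\delta$ before $r_1$ versus $r_1$ before $\delta$) are the structural distinction.
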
\vspace{-4ex}
\begin{proof}
	By the compactness of \(X\), \te \(\{x_j\}_{j=1}^J \subset X\) \st 
	\begin{equation}
	\forall x \in X, \; \exists j \in [J] \text{ such that } \bM \left( \Ph (x) - \Ph(x_j)\right) < \frac{\et}{2I}.\label{eqn: xj's}
	\end{equation} 
	For \(i \in [I], j\in [J]\) we define
	\[\cR_{ij}=\left\{ r' \in \left(r_0/2,3r_0/4\right):\bM\left(\Ph(x_j)\mres \del B(p_i, r')\right)<\frac{\et}{2I}\right\}.\]
	We note that 
	\[\left(r_0/2,3r_0/4\right)\setminus \cR_{ij} \text{ is finite } \forall\; i,j.\]
	Hence, we can choose 
	\begin{equation}
	r_1 \in \Big(\bigcap_{i\in [I], j\in [J]}\cR_{ij}\Big)\cap\left(r_0/2,3r_0/4\right)\cap \mathbb{Q}. \label{eqn: choosing r1}
	\end{equation} 
	\tf we have
	\[\bM\left(\Ph(x_j)\mres \del B(p_i,r_1)\right)< \frac{\et}{2I} \quad \forall \; i,j.\]
	Hence, we can choose \(\delta \in (0,r_0/8)\) \st
	\[\bM\left(\Ph(x_j)\mres \ov{A}(p_i,r_1-2\delta,r_1+\delta)\right)<\frac{\et}{2I} \quad \forall \; i,j;\]
	which implies (by \eqref{eqn: xj's})
	\[\bM\left( \Ph(x)\mres \ov{A}(p_i, r_1-2\delta, r_1+\delta)\right)<\frac{\et}{I} \quad \forall \; x \in X \; i \in [I].\]
\end{proof}
\begin{lem}
	There exists \(\delta\in (0,r_0/8)\) such that
	\[\cH^n\left(f_{\om}^{-1}(t)\cap \ov{A}(p_i, r_1-2\de, r_1+\delta)\right)<\frac{\et}{2I}\]
	for all \(t \in [1/3,2/3]\), \(i \in [I]\). \label{lem: mass of initial morse fn level set}
\end{lem}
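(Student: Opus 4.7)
The plan is first to establish pointwise vanishing of the annular mass as $\delta\downarrow 0$, then to upgrade to uniform vanishing in $t$ by a Dini-type argument. The key observation is that by the choice of $\om$ and $r_1$, both $f_\om|_M$ and the restrictions $f_\om|_{\del B(p_i,r_1)}$ are Morse functions for every $i\in[I]$: the former by construction of $\om$, and the latter because $p_i\in D$ and $r_1\in(0,\text{inj}(M))\cap\mathbb{Q}$ force $B(p_i,r_1)\in\mathscr{B}$. For the pointwise step, I fix $(t,i)\in[1/3,2/3]\times[I]$; since $\{f_\om=t\}\cap\del B(p_i,r_1)$ is a level set of a Morse function on the $n$-manifold $\del B(p_i,r_1)$, it has Hausdorff dimension at most $n-1$ and hence $\cH^n$-measure zero. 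The closed annuli $\ov{A}(p_i,r_1-2\delta,r_1+\delta)$ decrease to $\del B(p_i,r_1)$ as $\delta\to 0^+$, and the choice of $r_0$ supplies the finite dominating bound $\cH^n(f_\om^{-1}(t)\cap\ov{B}(p_i,r_0))<\et/2$; continuity of $\cH^n$ from above then yields
\[
\phi_\delta^i(t):=\cH^n\bigl(f_\om^{-1}(t)\cap\ov{A}(p_i,r_1-2\delta,r_1+\delta)\bigr)\longrightarrow 0\quad\text{as }\delta\to 0^+
\]
for every fixed $(t,i)$.

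For uniformity in $t$ I will argue that each $\phi_\delta^i$ is upper semicontinuous in $t$. Since $f_\om|_M$ is Morse and the ambient dimension satisfies $n+1\geq 3$, the Radon measures $\mu_t:=\cH^n\mres f_\om^{-1}(t)$ will vary continuously in $t$ in the weak-$*$ topology (a classical consequence of the Morse normal form near each of the finitely many critical points), and the Portmanteau theorem then gives upper semicontinuity of $t\mapsto\mu_t(\ov{A})$ on any fixed closed set $\ov{A}$. Combined with monotonicity of $\phi_\delta^i$ in $\delta$ and the pointwise vanishing above, a Dini-type argument produces uniform convergence: the open sets $\{t\in[1/3,2/3]:\phi_\delta^i(t)<\et/(2I)\}$ are nested as $\delta$ decreases and cover the compact interval $[1/3,2/3]$, so by compactness a single sufficiently small $\delta_i>0$ works for all $t$. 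Setting $\delta=\min_{i\in[I]}\delta_i$ (shrunk further if necessary so as to lie in $(0,r_0/8)$) completes the proof.

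The hard part will be justifying the weak-$*$ continuity of $\mu_t$ across the critical values of $f_\om|_M$. For Morse functions in ambient dimension at least $2$ this is classical via the Morse lemma — near each non-degenerate critical point one puts the level sets in explicit quadric form and computes their $\cH^n$-measures — but the analogous statement genuinely fails in ambient dimension $1$ (where $\cH^0$ of a level set jumps as preimages merge at a critical value); the hypothesis $n+1\geq 3$ ensures we are outside that pathological regime.
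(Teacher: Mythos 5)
Your proof is correct and relies on the same core ingredients as the paper's (weak-$*$ continuity of the level-set measures $t\mapsto\cH^n\mres f_\omega^{-1}(t)$, the Portmanteau inequality for closed sets, continuity of measure from above, and vanishing of $\cH^n$ on $f_\omega^{-1}(t)\cap\del B(p_i,r_1)$ because $f_\omega\vert_{\del B(p_i,r_1)}$ is Morse); the only difference is organizational, as you argue via Dini/open-cover compactness where the paper argues by sequential contradiction, and these are equivalent repackagings. You also usefully flag that the weak-$*$ continuity of $\mu_t$ across critical values of $f_\omega$ genuinely needs justification via the Morse normal form and the hypothesis on ambient dimension -- the paper asserts it without comment.
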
\vspace{-4ex}
\begin{proof}
	We assume by contradiction that there exist \(\{t_j\}_{j=1}^{\infty}\subset [1/3,2/3]\) and \(\{d_j\}_{j=1}^{\infty}\subset \bbr^{+}\) \st \(d_j \ra 0\) and for some \(i \in [I]\)
	\[\cH^n\left(f_{\om}^{-1}(t_j)\cap \ov{A}(p_i, r_1-2d_j, r_1+d_j)\right)\geq\frac{\et}{2I}\]
	holds for all \(j \in \bbn\). Without loss of generality we can assume that \(t_j \ra t\). Denoting
	\[\mu_j=\cH^n\mres f_{\om}^{-1}(t_j) \qquad \text{ and } \qquad \mu=\cH^n\mres f_{\om}^{-1}(t),\]
	we have that \(\mu_j\) weakly converges to \(\mu\) (in the sense of radon measure). Let us fix \(l \in \bbn\). There exists \(j_0 \in \bbn\) \st \(d_j \leq l^{-1}\) for all \(j \geq j_0\). Hence, 
	\[\mu_j(\ov{A}(p_i, r_1-2l^{-1}, r_1+l^{-1}))\geq \frac{\et}{2I}\]
	for all \(j \geq j_0\). Therefore,
	\[\mu(\ov{A}(p_i, r_1-2l^{-1}, r_1+l^{-1}))\geq \limsup_{j \ra \infty} \mu_j(\ov{A}(p_i,r_1-2l^{-1}, r_1+l^{-1} ))\geq \frac{\et}{2I}.\]
	This holds for all \(l \in \bbn\). Denoting \(A_l = \ov{A}(p_i, r_1-2l^{-1}, r_1+l^{-1})\) we have
	\[A_{l+1}\subset A_l \quad \text{ and } \quad \bigcap_{l=1}^{\infty}A_l= \del B(p_i, r_1).\]
	This implies
	\[\cH^n\left(f_{\om}^{-1}(t)\cap \del B(p_i, r_1)\right)=\mu(\del B(p_i, r_1))=\lim_{l \ra \infty} \mu(A_l) \geq \frac{\et}{2I}.\]
	However, this is not possible. Indeed, \(r_1 \in \mathbb{Q}\) (Lemma \ref{lem: mass of phi in annulus}) and by the choice of \(\om\), \(f_{\om}\vert_{\del B(p_i, r_1)}\) is a Morse function on $\del B(p_i, r_1)$; hence \( \cH^n\left(f_{\om}^{-1}(t)\cap \del B(p_i, r_1)\right)\) must be \(0\).
\end{proof}
We can assume that the two \(\delta\)'s appearing in Lemma \ref{lem: mass of phi in annulus} and Lemma \ref{lem: mass of initial morse fn level set} are the same. Next, we modify \(f_{\om}\) near the points where it achieves local maxima or local minima (which are not global maxima or minima) to get another Morse function \(f:M \ra [1/3, 2/3]\) \st \(f\) has no non-global local maxima or local minima and for all \(t \in [1/3,2/3]\), \(i \in [I]\)
\begin{equation}
\label{eq: mass of modified morse fn level set} 
\cH^n\left(f^{-1}(t)\cap \overline{\bbb^0_i}\right)<\et \quad \text{ and } \quad \cH^n\left(f^{-1}(t)\cap \ov{A}(p_i, r_1-2\de, r_1+\delta)\right)<\frac{\et}{I}.
\end{equation}
Let us introduce the following notation which will be used later.
\begin{align}
&\bbb_i^1 = B(p_i,r_1); \qquad \cA_1=\bigcup_{i=1}^IA(p_i,r_1-2\de, r_1+\de); \nonumber \\
&\cA_2=\bigcup_{i=1}^IA\left(p_i,r_1-3\de/2, r_1+\de/2\right)  ; \qquad \cA=\bigcup_{i=1}^IA(p_i,r_1-\de, r_1). \label{eq: notation for ball and annulus}
\end{align}
\subsection{Cell complex structure on the parameter space}
For \(l \in \bbn\), \(\mathscr{I}[l]\) is the cell complex on \(\mathscr{I}=[0,1]\) whose \(0\)-cells are
\[[0],\; [l^{-1}], \dots , [1-{l^{-1}}],\; [1]\]
and \(1\)-cells are 
\[[0,l^{-1}],\;[l^{-1},2l^{-1}],\dots, [1-l^{-1},1].\]
\(\mathscr{I}^m[l]\) denotes the cell complex on \(\mathscr{I}^m=[0,1]^m\) whose cells are \(\al_1 \otimes \al_2 \otimes \dots \otimes \al_m\) where each \(\al_j \in \mathscr{I}[l]\). We note that \(\mathscr{I}^m[1]\) is the standard cell complex on \(\mathscr{I}^m\). By abuse of notation, we will identify a cell \(\al_1 \otimes \al_2 \otimes \dots \otimes \al_m\) with its support \(\al_1 \times \al_2 \times \dots \times \al_m \subset \mathscr{I}^m\). Similarly, if \(Y\) is a subcomplex of \(\mathscr{I}^m[1]\), \(Y[l]\) is the union of all the cells in \(\mathscr{I}^m[l]\) whose support is contained in \(Y\). If \(\cY\) is a cell complex, \(\cY_p\) will denote the set of all \(p\)-cells in \(\cY\); if \(\al,\be \in \cY\) \st \(\be\) is a face of \(\al\) (in the definition of the face, we do not insist that \(\dim(\be)<\dim(\al)\) or \(\dim(\be)=\dim(\al)-1\)), we use the notation \(\be \prec \al\).
\newcommand{\il}{\mathscr{I}[l]} \newcommand{\iml}{\mathscr{I}^m[l]}\newcommand{\sci}{\mathscr{I}}

If \(\la=[il^{-1},(i+1)l^{-1}] \in \il_1\), there exists a canonical map
\begin{equation}
\De_{\la}:\mathscr{I} \ra \mathscr{I};\qquad \De_{\la}(t)=(i+t)l^{-1}\label{eq defining delta lambda}
\end{equation}
such that \(\De_{\la}:\mathscr{I} \ra \la\) is a homeomorphism. Similarly, if \(\al = \al_1 \otimes \al_2 \otimes \dots \otimes \al_m \in \iml_p\), there exists a canonical map \(\De_{\al}:\mathscr{I}^p \ra \sci^m\) defined as follows. There exist precisely \(p\) indices
\[j_1<j_2<\dots <j_p \;\text{  such that  } \dim(\al_{j_s})=1 \; \forall  s=1,2,\dots,p.\]
We define 
\begin{equation}
\left(\De_{\al}(t_1,t_2, \dots, t_p)\right)_j=
\begin{cases}
\De_{\al_{j_s}}(t_s) & \text{if } j =j_s,\\
\al_j & \text{if } j \notin \{j_1,j_2,\dots,j_p\} \Leftrightarrow \dim(\al_j)=0.
\end{cases}
\end{equation}
\(\De_{\al}:\mathscr{I}^p \ra \al\) is a homeomorphism. Let \(D_{\al}:\al \ra \sci^p\) be the inverse of \(\De_{\al}\).

If \(\be \prec \al\) and \(D_{\al}(\be)=\varrho \prec \sci^p\), then the following compatibility relation holds.
\begin{equation}
	\De_{\al}\circ \De_{\varrho}=\De_{\be}. \label{eq compatibility}
\end{equation}

We recall from Section 1 that \(X\) is a subcomplex of \(\sci^N[1]\) for some \(N\) and \( \pi: \tx \ra X\) is a double cover. For \(l\in\bbn\), \(X[l]\) is the cell complex on \(X\) as defined above. \(\tx[l]\) denotes the cell complex on \(\tx\) whose cells are pre-images of the cells of \(X[l]\) via the map \(\pi\). We choose \( K \in \bbn\) \st the followings hold. (Here \(\Ph\) and \(\tilde{\Ph}\) are as chosen at the beginning of Section 3.)
\begin{align}
&\text{If } x_1,x_2 \in X[K]_0 \text{ belong to a common cell in } X[K],\; \bM\left(\Ph(x_1)-\Ph(x_2)\right)<\et. \label{cond: estimating mass norm}\\
&\text{If } y_1,y_2 \in \tx[K]_0 \text{ belong to a common cell in } \tx[K],\; \cH^{n+1}(\tilde{\Ph}(y_1)\De\tilde{\Ph}(y_2))<\frac{\et\de}{I}; \label{cond: estimating flat norm}
\end{align}
where \(\de\) is as in Section 3.2 (Lemma \ref{lem: mass of phi in annulus}, \eqref{eq: mass of modified morse fn level set}). 

Let \(\{c_q:q \in [Q]\}\) be all the cells of \(X[K]\) indexed in such a way that \(\dim(c_{q_1})\leq \dim(c_{q_2})\) if \(q_1 \leq q_2\). Let \(\{e_q,f_q:q \in [Q]\}\) be the cells of \(\xtk\) so that \(\pi^{-1}(c_q)=e_q \;\dot\cup\; f_q\). Since \(c_q\) is contractible, \(\pi|_{e_q}:e_q \ra c_q\) and \(\pi|_{f_q}:f_q \ra c_q\) are homeomorphisms. Let us introduce the following notation (\(d=\dim(c_q)\)).
\begin{align}
 \De_{c_q}=\De_{q}:\sci^{d} \ra c_q \quad&;\quad D_{c_q}=D_{q}:c_q \ra \sci^{d}; \nonumber\\
\left(\pi|_{e_q}\right)^{-1}\circ \De_q=\De^1_q:\sci^{d} \ra e_q \quad &; \quad D_q \circ \left(\pi|_{e_q}\right)= D^1_q:e_q \ra \sci^{d}; \label{eq canonical maps}\\
\left(\pi|_{f_q}\right)^{-1}\circ \De_q=\De^2_q:\sci^{d} \ra f_q \quad &; \quad D_q \circ \left(\pi|_{f_q}\right)=D^2_q:f_q \ra \sci^{d}. \nonumber
\end{align}

\subsection{Construction of an almost smooth sweepout}
The next proposition follows from Proposition \ref{pro: modified smooth approximation}.
\begin{pro}
	\label{pro: seq of good approximations}
	There exists a \seq \(\{\tph_j : \tx[K]_0 \ra \cm\}_{j=1}^{\infty}\) \st the followings hold for all \(j \in \bbn\) and \(x \in \tx[K]_0\).
\begin{itemize}\vspace{-3ex}
	\item[(i)] \(\pht_j(x)\) is a closed subset of \(M\).
	\item[(ii)] \(\db{\pht_j(x)}+\db{\pht_j(T(x))}=\db{M}\) and \(M= \pht_j(x) \cup \pht_j(T(x))\).
	\item[(iii)] As \(j \ra \infty\), \(\ch_{\pht_j(x)} \ra \ch_{\pht(x)}\) in \(L^1(M)\).
	\item[(iv)] \(\del\pht_j(x)=\del\pht_j(T(x))=\pht_j(x)\cap \pht_j(T(x))=\delst\pht_j(x)=\delst\pht_j(T(x))\) is a smooth, closed hypersurface in \(M\).
	\item[(v)] Defining \(\Ph_j(x)= \delst\pht_j(x)\), as \(j \ra \infty\), \(\db{\Ph_j(x)} \ra \Ph(\pi(x))\) in \(\bF\).
	\item[(vi)] For all \(i \in [I]\), 
	\[\lim_{j \ra \infty}\int_{\del B(p_i,t)}|\ch_{\pht_j(x)}-\ch_{\pht(x)}|\;d\cH^n=0\]
	for a.e. \(t \in (0,r_1)\).
\end{itemize}\vspace{-2ex}
\end{pro}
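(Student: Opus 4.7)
The plan is to invoke Proposition \ref{pro: modified smooth approximation} once for each antipodal pair of vertices in $\xtk_0$. Because $\pht$ is $\bbz_2$-equivariant, the ``$E$'' and ``$F$'' slots of that proposition are automatically filled by $\pht(x)$ and $\pht(T(x))$, and the finiteness of $\xtk_0$ then lets me upgrade the subsequence in item (v) of Proposition \ref{pro: modified smooth approximation} to a common one that works simultaneously over all vertices and all of the finitely many balls $B(p_i, \cdot)$.

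Concretely, I would partition $\xtk_0$ into the antipodal pairs $\{x, T(x)\}$. For each such pair, set $E := \pht(x) \in \cm$; the equivariance relation $\db{\pht(x)} + \db{\pht(T(x))} = \db{M}$ forces $\pht(T(x))$ to coincide, as a Caccioppoli set, with $F := M \setminus E$. Applying Proposition \ref{pro: modified smooth approximation} to this $(E, F)$ yields sequences of closed sets $\{E_j^{(x)}\}_j$ and $\{F_j^{(x)}\}_j$ satisfying items (i)--(v) there, and I define $\pht_j(x) := E_j^{(x)}$, $\pht_j(T(x)) := F_j^{(x)}$.

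Items (i)--(iv) of the present proposition are then immediate restatements of Proposition \ref{pro: modified smooth approximation}(i)--(iii). For (v), since $\pht$ is a lift of $\Ph$, we have $\Ph(\pi(x)) = \db{\delst \pht(x)}$; combined with Proposition \ref{pro: modified smooth approximation}(iv) this gives
\[\db{\Ph_j(x)} = \db{\delst E_j^{(x)}} \lra \db{\delst \pht(x)} = \Ph(\pi(x))\]
in the $\bF$ metric, which is exactly (v).

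The only step needing any extra care is (vi), and this is where I expect the (mild) main obstacle. Proposition \ref{pro: modified smooth approximation}(v) only guarantees slice-convergence $\int_{\del B(p_i, t)} |\ch_{E_{j_s}} - \ch_E|\, d\cH^n \ra 0$ along a subsequence $\{j_s\}$ that a priori depends on both the base point $p_i$ and the vertex $x$. Since $[I]$ and $\xtk_0$ are both finite, I would iterate the subsequence extraction finitely many times over all pairs $(x, i)$ and relabel the resulting diagonal subsequence as $\{\pht_j\}$. Properties (i)--(v) either hold for each $j$ individually or are convergence statements, so all are preserved under passage to a subsequence, and (vi) then holds simultaneously for every $x \in \xtk_0$ and every $i \in [I]$.
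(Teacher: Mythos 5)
Your proposal is correct and takes essentially the same approach as the paper, which simply states that the proposition ``follows from Proposition \ref{pro: modified smooth approximation}'' without spelling out the details; you correctly fill those in by applying that proposition once per antipodal pair $\{x, T(x)\}$ (using the $\bbz_2$-equivariance of $\pht$ to identify $\pht(T(x))$ with $M\setminus\pht(x)$) and then running a finite diagonal subsequence extraction over the finitely many vertices and the finitely many centers $p_i$ to get item (vi) along a single relabeled sequence.
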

 We will approximate \(\tph\) by a discrete and `almost smooth' sweepout \(\pst:\xt[KI]_0 \ra \cm\) which will be constructed using the \(\pht_j\)'s. The construction is motivated by the interpolation theorems of Almgren \cite{alm_article}, Pitts \cite{Pitts}*{4.5}, Marques-Neves \cite{MN_Willmore}*{Theorem 14.1} and Chambers-Liokumovich \cite{CL}*{Lemma 6.2}. The construction is divided into three parts.

\textbf{Part 1.} We recall that \(\{e_q,f_q:q \in [Q]\}\) are the cells of \(\xtk\). For \(q\in [Q]\) we define the following collection of balls
\begin{equation}
\label{eq: defining the colloection of balls}
\mathscr{B}(q)=\{B(p_i,r_i(q)):i \in [I]\}
\end{equation}
where \(r_i(q)\in (r_1-\de,r_1)\) (\(\de\) is as in Section 3.2) and \(r_i(q)\) is chosen inductively so that the following conditions are satisfied.
\begin{itemize}\vspace{-3ex}
	\item[(i)] \(\norm{\Ph(\pi(x))}(\del B(p_i,r_i(q)))=0 \;\;\forall\; x \in (e_q)_0.\)
	\item[(ii)] \(\del B(p_i,r_i(q))\) is transverse to \(\Ph_j(x)\) for all \(x \in (e_q)_0\) and \(j \in \bbn.\)
	\item[(iii)] \(\del B(p_i,r_i(q))\) is transverse to \(\del B(p_s,r_s(q))\) for all \(s<i\).
	\item[(iv)] \(\del B(p_i,r_i(q))\) is transverse to  \(\del B(p_j,r_j(q'))\) for all \(q' < q\) and \(j \in [I]\).
	\item[(v)] \(\ch_{\pht_j(x)}\) converges to \(\ch_{\pht(x)}\) in \(L^1(M, \cH^n\mres \del B(p_i,r_i(q)))\) for all \(x \in (e_q)_0\cup (f_q)_0\)\footnote{For this item we need to use item (vi) of Proposition \ref{pro: seq of good approximations}.}.
	\item[(vi)] If \(m=\dim(e_q)=\dim(f_q)\),
	\[\int_{\del B(p_i, r_i(q))}|\ch_{\pht(x)}-\ch_{\pht(x')}| \; d\cH^{n}< \frac{2^{2m}\et}{I}\]
	for all \(x,x' \in (e_q)_0\) and for all \(x,x' \in (f_q)_0\)\footnote{For this item we need to use \eqref{cond: estimating flat norm}.}.
\end{itemize}\vspace{-2ex}
Next we choose
\begin{equation*}
r_1>r>\max\{r_i(q):i \in [I], q \in [Q]\}
\end{equation*}
\st the followings hold.
\begin{align}
&\norm{\Ph(\pi(x))}(\del B(p_i,r))=0\; \forall \; x \in \xtk_0,\; i \in [I];\label{eq defining r}\\
&\norm{\Ph_j(x)}(\del B(p_i,r))=0\; \forall \; j \in \bbn,\; x \in \xtk_0,\; i \in [I].\label{eq defining r bis}
\end{align}
We introduce the notation
\begin{equation}
B_i(q)=B(p_i,r_i(q)) \quad;\quad \bbb_i=B(p_i,r). \label{eq defining biq and bbbi}
\end{equation}
We note that by \eqref{def: bbb0}, for all \(q \in [Q]\),
\[\bigcup_{i \in [I]}B_i(q)=M = \bigcup_{i \in [I]}\bbb_i.\]
\textbf{Part 2.} Let us introduce the following definitions.
\begin{equation*}
\mathscr{R}_1=\{\bbb_i:i \in [I]\}\cup \{M \setminus\ov{\bbb}_i: i \in [I]\},
\quad
\mathscr{R}_2=\{U_1\cap U_2 \cap \dots\cap U_s:s\in \bbn \text{ and each } U_j \in \mathscr{R}_1\},
\end{equation*}
\begin{equation*}
\mathscr{R}=\{V_1\cup V_2\cup \dots \cup V_t: t \in \bbn \text{ and each }V_j \in \mathscr{R}_2\}
\end{equation*}
\(\mathscr{R}_1, \mathscr{R}_2\) and \(\mathscr{R}\) are finite sets. In a topological space for any two sets \(A\) and \(B\), 
\begin{equation}
\del (A \cap B), \;\del(A \cup B) \subset \del A \cup \del B.\label{eq boundary}
\end{equation}
Hence, for any \(R \in \mathscr{R}\),
\[\del R \subset \bigcup_{i \in [I]} \del \bbb_i \Longrightarrow \norm{\Ph(\pi(x))}(\del R)=0\; \forall \; x \in \xtk_0.\]
Moreover, \(R\) is an open subset of \(M\). \tf by Proposition \ref{pro: seq of good approximations} item (v), as \(j \ra \infty\)
\begin{equation}
	\norm{\Ph_j(x)}(R) \lra \norm{\Ph(\pi(x))}(R)\; \forall \; x \in \xtk_0. \label{eq phi-j-R converges to phi-R }
\end{equation}
We also note that \(M \in \mathscr{R}\) as \(\cup_{i \in [I]}\bbb_i=M.\)
\begin{pro}\label{pro existence of alpha}
	There exists \(\ga\in \bbn\) \st the followings hold.
\begin{itemize}\vspace{-3ex}
	\item[(i)] $\big|\norm{\Ph_{\ga}(x)}(R)-\norm{\Ph(\pi(x))}(R)\big|<\et$ for all \(x \in \xtk_0\) and \(R \in \mathscr{R}\).
	\item[(ii)] \(\cH^n(\Ph_{\ga}(x)) < L+2\et \; \forall  x \in \xtk_0.\)
	\item[(iii)] \(\cH^n(\Ph_{\ga}\cap \ov{\bbb_i^0})<\et \; \forall x \in \xtk_0\).
	\item[(iv)] \(\cH^n(\Ph_{\ga}\cap \ov{\cA_1})<\et \; \forall x \in \xtk_0\).
	\item[(v)] For \(q \in [Q]\) and \(i \in [I]\), if \(\dim(e_q)=\dim(f_q)=m\),
	\begin{equation}
		\int_{\del B_i(q)}|\ch_{\pht_{\ga}(x)}-\ch_{\pht_{\ga}(x')}| \; d\cH^{n}< \frac{2^{2m}\et}{I} \label{eq boundary sphere has small mass}
	\end{equation}
	if \(x,x' \in (e_q)_0\) or if \(x,x' \in (f_q)_0\).
\end{itemize}\vspace{-2ex} 
\end{pro}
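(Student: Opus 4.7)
The plan is to establish each of the five items for some threshold index $\ga_l$ and then take $\ga=\max_l \ga_l$; this is permitted because $\xtk_0$, $\mathscr{R}$, $[I]$ and $[Q]$ are all finite, so any convergence or upper semi-continuity statement may be upgraded to a uniform one.

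Item (i) is immediate from \eqref{eq phi-j-R converges to phi-R } applied to each of the finitely many pairs $(x,R)\in\xtk_0\times\mathscr{R}$. Item (ii) then follows: since $M\in\mathscr{R}$, item (i) combined with \eqref{eq mass of Phi is bounded} gives
\[
\norm{\Ph_{\ga}(x)}(M) < \norm{\Ph(\pi(x))}(M)+\et = \bM(\Ph(\pi(x)))+\et < L+2\et.
\]

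For items (iii) and (iv) the sets $\ov{\bbb_i^0}$ and $\ov{\cA_1}$ are closed but do not lie in $\mathscr{R}$, so a slightly different tool is required. Proposition \ref{pro: seq of good approximations}(v) gives $\db{\Ph_j(x)}\ra \Ph(\pi(x))$ in $\bF$, hence $\norm{\Ph_j(x)}\ra \norm{\Ph(\pi(x))}$ weakly as Radon measures, and upper semi-continuity on closed sets yields $\limsup_{j\ra\infty}\norm{\Ph_j(x)}(K) \leq \norm{\Ph(\pi(x))}(K)$ for every closed $K\subset M$. For $K=\ov{\bbb_i^0}$ the right-hand side is $<\et$ by \eqref{eq mass of Phi in Biz}, and for $K=\ov{\cA_1}$ it is $<\et$ by summing Lemma \ref{lem: mass of phi in annulus} over $i\in[I]$. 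Finiteness of $\xtk_0\times[I]$ makes the choice of $\ga$ uniform.

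For item (v), apply the triangle inequality
\[
|\ch_{\pht_{\ga}(x)}-\ch_{\pht_{\ga}(x')}| \leq |\ch_{\pht_{\ga}(x)}-\ch_{\pht(x)}| + |\ch_{\pht(x)}-\ch_{\pht(x')}| + |\ch_{\pht(x')}-\ch_{\pht_{\ga}(x')}|
\]
and integrate over $\del B_i(q)$. The middle integral is strictly less than $2^{2m}\et/I$ by condition (vi) in the construction of $\mathscr{B}(q)$, while the first and third tend to $0$ as $\ga\ra\infty$ by condition (v) therein, so for $\ga$ large enough the total is still $<2^{2m}\et/I$. The only delicate point in the entire argument is preserving strict inequalities when combining strict limiting bounds with vanishing error terms, and this is automatic thanks to the finiteness of all the index sets involved.
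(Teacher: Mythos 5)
Your proof is correct and follows essentially the same route as the paper's: item (i) from \eqref{eq phi-j-R converges to phi-R }, item (ii) from $M\in\mathscr{R}$ together with \eqref{eq mass of Phi is bounded}, items (iii)--(iv) via the upper semi-continuity of mass on closed sets (from Proposition \ref{pro: seq of good approximations}(v)) combined with \eqref{eq mass of Phi in Biz} and Lemma \ref{lem: mass of phi in annulus}, and item (v) from conditions (v) and (vi) in the choice of $r_i(q)$ via the triangle inequality. You merely make explicit a few steps (weak convergence of Radon measures, uniformity from finiteness of the index sets) that the paper leaves implicit.
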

\begin{proof}
    (i) follows from \eqref{eq phi-j-R converges to phi-R }. Since, \(M \in \mathscr{R}\), (ii) follows from \eqref{eq mass of Phi is bounded} and (i). To obtain (iii) we note that Proposition \ref{pro: seq of good approximations} item (v) and \eqref{eq mass of Phi in Biz} imply for each \(x \in \xtk_0\),
	\[\limsup_{j \ra \infty}\norm{\Ph_j(x)}(\ov{\bbb_i^0})\leq \norm{\Ph(\pi(x))}(\ov{\bbb_i^0})< \et. \]
	Similarly, (iv) follows from Proposition \ref{pro: seq of good approximations} item (v) and Lemma \ref{lem: mass of phi in annulus}. Finally, item (v) follows from items (v) and (vi) of the definition of \(r_i(q)\).
\end{proof}
Next we define
\[\mathscr{S}=\left\{\Ph_{\ga}(x) \cap \del B_i(q): x \in \xtk_0, i \in [I], q \in [Q]\right\} \bigcup \Big\{\del B_i(q) \cap \del B_j(q'): i,j \in [I]; q,q' \in [Q]\Big\}\]
where \(\ga\) is as in Proposition \ref{pro existence of alpha}. Let \(\cS\) be the closed subset of \(M\) which is the union of all the elements of \(\mathscr{S}\). By the transversality assumptions in the definition of \(r_i(q)\), each non-empty element of \(\mathscr{S}\) is a smooth, closed, co-dimension \(2\) submanifold of \(M\). Hence, for all \(\Si \in \mathscr{S}\)
\[\cH^n(\cT_{\rh}(\Si))=O(\rh)\]
where the constants in \(O(\rh)\) depend on \(\Si\). \tf there exist constants \(C,\; \rh_0\) depending on the submanifolds contained in the set \(\mathscr{S}\) \st
\begin{equation}
	\cH^n(\cT_{\rh}(\cS)) \leq C\rh \quad \forall \rh \leq \rh_0. \label{eq area of equidistant hypersurface}
\end{equation}

\textbf{Part 3.} \newcommand{\xit}{\tilde{\Xi}}\newcommand{\xtki}{\xt[KI]_0}\newcommand{\eqiz}{e_q[I]_0}\newcommand{\fqiz}{f_q[I]_0}\newcommand{\eqz}{(e_q)_0}\newcommand{\fqz}{(f_q)_0}
Let \(\cK(M)\) be the set of all closed subsets of \(M\). We are now going to prove that there exists a discrete, `almost smooth' sweepout \(\pst :\xt[KI]_0 \ra \cK(M)\) which approximates \(\pht: \xt \ra \cm.\) With some additional work it is possible to ensure that for all \(v \in \xtki\), \(\pst(v) \in \cm\); however we do not need this fact to prove Theorem \ref{thm main thm}. Let us introduce the following notation. Let \(\mathbb{K},\; \mathbb{K}'\) be closed subsets of \(M\). \(B\) is a normal geodesic ball and \(A=M\setminus \overline{B}.\) We define
\begin{equation}
\Om(\mathbb{K},\mathbb{K}';B) = (\mathbb{K} \cap \ov{A} ) \cup (\mathbb{K}' \cap \ov{B}) \label{eq defining Omega operation}
\end{equation}
which is also a closed subset of \(M\). This definition is motivated by the construction of ``nested sweepouts" by Chambers and Liokumovich \cite{CL}*{Section 6}. In the following proposition and in its proof, we will use various notations which were introduced previously.
\begin{pro}\label{pro big}
	There exists a map \(\pst:\xt[KI]_0 \ra \cK(M)\) \st \(\pst\) has the property
	\begin{itemize}\vspace{-3ex}
		\item[\((P)_0\)] \(\pst(x)=\pht_{\ga}(x)\) for all \(x \in \xtk_0\).
	\end{itemize}\vspace{-3ex}
	Moreover, denoting 
	\[\Ps(v)=\Ps(T(v))=\pst(v)\cap \pst(T(v))\text{ for } v \in \xtki\; ;\] 
    \[\pst\circ\De^1_q =\xit^1_q,\quad \quad \pst\circ\De^2_q =\xit^2_q ,\]
	for \(q \in [Q]\) and \(m =\dim(e_q)=\dim(f_q)\), \(\pst|_{e_q[I]_0}\), \(\pst|_{f_q[I]_0}\) have the following properties.
	\begin{itemize}\vspace{-3ex}
		\item[$(P0)_{m,q}$]  For \(s=1,2\),
		\[\xit^s_q(\xi',iI^{-1})=\Om\left(\xit^s_q(\xi',(i-1)I^{-1}),\; \xit^s_q(\xi',1);\;B_i(q)\right)\]  for all \(\xi' \in \sci^{m-1}[I]_0 \setminus \left(\del \sci^{m-1}\right)[I]_0\) and \(i \in [I]\).
		\item[\((P1)_{m,q}\)] For all \(v \in \eqiz\), \(M=\pst(v) \cup \pst(T(v))\).
		\item[\((P2)_{m,q}\)] For all \(v \in e_q[I]_0\), \[\pst(v) \subset \bigcup_{x \in (e_q)_0}\pht_{\ga}(x).\]
		Similarly, for all \(v' \in f_q[I]_0\), 
		 \[\pst(v') \subset \bigcup_{x' \in (f_q)_0}\pht_{\ga}(x').\]
		\item[\((P3)_{m,q}\)] Let \[\mathscr{F}_q=\{s \in [Q]: e_s \prec e_q \text{ or } f_s \prec e_q\}.\]
		For all \(v \in e_q[I]_0\),
		\[\del\pst(v),\del \pst(T(v)) \subset \bigg\{\bigcup_{x \in \eqz} \Ph_{\ga}(x) \bigg\} \bigcup \bigg\{\bigcup_{i \in [I],\; s \in \mathscr{F}_q } \del B_i(s) \bigg\}.\]
		\item[\((P4)_{m,q}\)] For all \(v \in \eqiz\), \(\Ps(v)\setminus \cS=\mathbb{S}_1\;\dot\cup\;\mathbb{S}_2\;\dot\cup\;\dots\;\dot\cup\;\mathbb{S}_{l}\) where each \(\mathbb{S}_j\) is an open subset of a hypersurface belonging the set
		\[\Big\{\Ph_{\ga}(x):x \in \eqz \Big\} \bigcup \Big\{\del B_i(s):i \in [I],\; s \in \mathscr{F}_q \Big\}.\]
		Here \(\mathscr{F}_q\) is as in \((P3)_{m,q}\).
		\item[\((P5)_{m,q}\)] \sps \(v,v'\in \eqiz\) or \(v,v'\in \fqiz\); \(e\in e_q[I]_1\cup f_q[I]_1\) \st \(v,v'\prec e \). Then, there exists \(\cB=\bbb_{i(e)}\) for some \(i(e)\in [I]\) \st
		\begin{align*}
		\Ps(v)\cap \left(M\setminus(\cA \cup \cB)\right) & = \Ps(v')\cap \left(M\setminus(\cA \cup \cB)\right);\\
		\pst(v)\cap \left(M\setminus(\cA \cup \cB)\right) & = \pst(v')\cap \left(M\setminus(\cA \cup \cB)\right);\\
		\pst(T(v))\cap \left(M\setminus(\cA \cup \cB)\right) & = \pst(T(v')\cap \left(M\setminus(\cA \cup \cB)\right).
		\end{align*}
		Moreover, \(\cB\) can be characterized as follows. Without loss of generality, let \(v,v'\in \eqiz\). If \(D^1_q(v)=(\xi_1,\xi_2,\dots, \xi_m)\) and \(D^1_q(v')=(\xi'_1,\xi'_2,\dots, \xi'_m)\), there exists a unique index \(j \in [m]\) \st \(\xi_j \neq \xi'_j\). Let \(i \in [I]\) be such that \(\{\xi_j,\xi'_j\}=\{(i-1)I^{-1},iI^{-1}\}\). Then \(\cB=\bbb_i.\)
		\item[\((P6)_{m,q}\)] For all \(i \in [I]\) and \(v \in \eqiz\),
		\[\cH^n\left(\Ps(v)\cap \ov{\bbb^0_i}\right)<2^{4m+2}\et.\]
		\item[\((P7)_{m,q}\)] For all \(v \in \eqiz\),
		\[\cH^n\left(\Ps(v)\cap \ov{\cA}_1\right)<2^{4m+2}\et.\]
		\item[\((P8)_{m,q}\)] For all \(v \in \eqiz\), \(x \in (e_q)_0\) and \(R \in \mathscr{R}\),
		\[\Big|\cH^n\left(\Ps(v) \cap R\right)-\cH^n\left(\Ph_{\ga}(x) \cap R\right)\Big|<2^{4m+2}\et.\]
		In particular, 
		\begin{equation}
		\cH^n(\Ps(v))<L+(2^{4m+2}+2) \et.\label{H^n of Psi(v)}
		\end{equation}
	\end{itemize}\vspace{-2ex}
\end{pro}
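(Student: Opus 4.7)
The plan is to construct $\pst$ by induction on the dimension $m$ of the cells $e_q,f_q$ of $\xtk$. The base case $m=0$ is forced by $(P)_0$: set $\pst(x)=\pht_{\ga}(x)$ for $x\in\xtk_0$, and then each of $(P1)_{0,q}$--$(P8)_{0,q}$ is a direct translation of an item of Proposition \ref{pro: seq of good approximations} or Proposition \ref{pro existence of alpha} applied to $\pht_{\ga}$; for instance $(P1)_{0,q}$ is Proposition \ref{pro: seq of good approximations}(ii), $(P3)_{0,q}$ and $(P4)_{0,q}$ follow from Proposition \ref{pro: seq of good approximations}(iv), while $(P6)_{0,q}$--$(P8)_{0,q}$ are items (iii), (iv), (i) of Proposition \ref{pro existence of alpha}.

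For the inductive step, fix $m\geq 1$ and a cell $e_q$ of dimension $m$, assuming $\pst$ is defined on all cells of lower dimension with the listed properties. Every boundary vertex of $e_q[I]_0$ lies on a face $e_s$ or $f_s$ with $\dim<m$, so $\pst$ is already defined there. For an interior vertex $v$ with $D^1_q(v)=(\xi',iI^{-1})$, $\xi'\in\sci^{m-1}[I]_0\setminus(\del\sci^{m-1})[I]_0$ and $i\in[I]$, I define $\xit^1_q(\xi',iI^{-1})$ recursively in $i$ by the formula in $(P0)_{m,q}$, taking the already-defined boundary values $\xit^1_q(\xi',0)$ and $\xit^1_q(\xi',1)$ as initial data; the same recipe is applied on $f_q$ via $\xit^2_q$. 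The potential inconsistency at $i=I$ is vacuous: unrolling the iterated $\Om$-operations, the formula reduces on $M\setminus\bigcup_{i\in[I]}B_i(q)$ to $\xit^1_q(\xi',0)$, and this complement is empty because $\bigcup_{i\in[I]}B_i(q)=M$, so the recursion agrees with the inductively defined $\xit^1_q(\xi',1)$.

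The structural properties $(P1)_{m,q}$--$(P5)_{m,q}$ are preserved by each $\Om$-step essentially by inspection. The identity $\Om(\mathbb{K},\mathbb{K}';B)\cup\Om(\mathbb{K}^c,(\mathbb{K}')^c;B)=M$ whenever $\mathbb{K}\cup\mathbb{K}^c=\mathbb{K}'\cup(\mathbb{K}')^c=M$, the containment $\Om(\mathbb{K},\mathbb{K}';B)\subset\mathbb{K}\cup\mathbb{K}'$, and the boundary inclusion $\del\Om(\mathbb{K},\mathbb{K}';B)\subset\del\mathbb{K}\cup\del\mathbb{K}'\cup\del B$ yield $(P1)$, $(P2)$, and $(P3)$--$(P4)$ respectively; here one uses $\mathscr{F}_s\subset\mathscr{F}_q$ for $s\prec q$ together with the convention $q\in\mathscr{F}_q$ to absorb the newly introduced boundary piece $\del B_i(q)$. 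Property $(P5)$ holds because traversing any edge of $e_q[I]$ or $f_q[I]$ corresponds to changing exactly one coordinate by $I^{-1}$, hence to a single application of $\Om$ with the unique ball $\bbb_{i(e)}$ dictated by that coordinate.

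The main obstacle is the mass estimates $(P6)_{m,q}$--$(P8)_{m,q}$, which require careful bookkeeping of the pieces of $\Ps(v)=\pst(v)\cap\pst(T(v))$. After $m$ levels of the recursion, the decomposition in $(P4)_{m,q}$ writes $\Ps(v)\setminus\cS=\bigsqcup_j\mathbb{S}_j$ with a universally bounded number (in $m$) of pieces, each contained either in some $\Ph_{\ga}(x)$ with $x\in(e_q)_0$ or in some $\del B_i(s)$ with $s\in\mathscr{F}_q$. The $\cH^n$-mass inside $\ov{\bbb_i^0}$ of the $\Ph_{\ga}(x)$-pieces is controlled by Proposition \ref{pro existence of alpha}(iii), and that of the $\del B_i(s)$-pieces by items (v), (vi) of the construction of $r_i(q)$ together with condition \eqref{cond: estimating flat norm}; this yields $(P6)$ with the stated exponential factor $2^{4m+2}$, and $(P7)$ is entirely analogous using Proposition \ref{pro existence of alpha}(iv) and Lemma \ref{lem: mass of phi in annulus}. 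For $(P8)$, each $R\in\mathscr{R}$ has null $\norm{\Ph_{\ga}(x)}$-boundary by \eqref{eq defining r bis}, so the $\Om$-recursion writes $\cH^n(\Ps(v)\cap R)$ as $\cH^n(\Ph_{\ga}(x)\cap R)$ plus contributions of order $O(\et/I)$ from the $\del B_i(s)$-pieces coming from item (vi), and summing these errors together with Proposition \ref{pro existence of alpha}(i) yields the claimed bound; the final estimate $\cH^n(\Ps(v))<L+(2^{4m+2}+2)\et$ then follows by taking $R=M\in\mathscr{R}$ and invoking Proposition \ref{pro existence of alpha}(ii).
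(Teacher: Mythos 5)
Your overall strategy matches the paper's exactly: induction on the dimension of the cells of $\xtk$, extending $\pst$ via the $\Om$-recursion in the last coordinate, and then verifying $(P0)$--$(P8)$ level by level. The base case identifications and the consistency check at $i=I$ (via $\bigcup_i B_i(q)=M$) are both correct and follow the paper's argument. However, two verifications are treated too casually.

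First, your justification of $(P5)_{m,q}$ — that traversing an edge of $e_q[I]$ is always a ``single application of $\Om$'' — only handles the case when the edge changes the $m$-th coordinate, i.e.\ the one used in the current $\Om$-recursion. When the edge changes a coordinate $j<m$, the two vertices $v,v'$ sit on \emph{different} $\Om$-chains with \emph{different} boundary data and \emph{different} families of balls $B_i(\nu)$ versus $B_i(\rh)$; no single $\Om$-step relates $\pst(v)$ to $\pst(v')$. One must instead invoke the inductive hypothesis $(P5)_{(m-1),\cdot}$ on the bottom and top faces (giving agreement of the boundary data outside $\cA\cup\cB$), combine it with the fact that the radii $r_i(q),r_i(\rh)\in(r_1-\de,r_1)$ so that $B_i(\nu)\,\De\,B_i(\rh)$ and all the spheres $\del B_i(\cdot)$ lie inside $\cA$, and then propagate the agreement through the two $\Om$-chains. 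Without this case your claimed characterization of $\cB=\bbb_i$ in terms of the single changed coordinate is unsupported.

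Second, for $(P6)$--$(P8)$ the accounting is imprecise. The relevant decomposition is not ``a universally bounded number of pieces in $m$'': the number of sphere pieces in $\La_i$ grows with $i$ and hence with $I$. What keeps the estimate under control is the recurrence for $\La_i$ and the fact that each new sphere piece is a boundary trace of a symmetric difference $\pht_{\ga}(x_1)\setminus\pht_{\ga}(T(x_2))$ on a sphere $S_i$, which has $\cH^n$-mass $<2^{2m}\et/I$ by Proposition \ref{pro existence of alpha}(v); after $I$ steps this still only sums to $2^{4m+1}\et$. Likewise for $(P8)$ the error terms at each dimension step are $O(\et)$, not $O(\et/I)$ — the $I^{-1}$ factor is consumed internally by the sum over $i$ — and the induction is run by splitting $R=R_1\dot\cup R_2$ with $R_1,R_2\in\mathscr{R}$ and applying $(P8)_{(m-1),\cdot}$ to $\Si_0\cap R_1$ and $\Si_I\cap R_2$, rather than by the ``summing errors'' heuristic you describe. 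These are not fatal flaws in the approach — it is the same approach as the paper — but as written the quantitative steps do not quite go through and need the recurrence bookkeeping made explicit.
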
 
\begin{proof}
	We will inductively construct \(\pst:\eqiz, \fqiz \ra \cK(M)\). If \(\dim(e_q)=\dim(f_q)=0\), then \(e_q[I]=e_q\) and \(f_q[I]=f_q\). In that case we define \[\pst(e_q)=\pht_{\ga}(e_q) \quad \text{ and } \quad \pst(f_q)= \pht_{\ga}(f_q)\] which is precisely the property \((P)_0\). Moreover, for this definition of \(\pst\) on the \(0\)-cells \(e_q\),\(f_q\), properties \((P0)_{0,q}-(P8)_{0,q}\) are also satisfied. (We need to use Proposition \ref{pro existence of alpha}.)
	
	Let us assume that \(d\geq 1\) and \(\pst\) is defined on
	\[\bigcup_{\dim(e_q)\leq d-1}\Big(\eqiz \cup \fqiz\Big)\]
	and if \(\dim(e_s)=\dim(f_s)=(d-1)\), \(\pst|_{e_s[I]_0}\), \(\pst|_{f_s[I]_0}\) have the properties \((P0)_{(d-1),s}-(P8)_{(d-1),s}.\)
	\newcommand{\eniz}{e_{\nu}[I]_0}\newcommand{\fniz}{f_{\nu}[I]_0}
	
	Let \(\nu \in [Q]\) be \st \(\dim(e_{\nu})=\dim(f_{\nu})=d\). We define \(\xit^1_{\nu}, \;\xit^2_{\nu}\) on \(\sci^d[I]_0\) and \(\pst\) on \(e_{\nu}[I]_0,\; f_{\nu}[I]_0\) as follows. By our assumption, \(\pst\) is defined on \((\del e_{\nu})[I]_0,  \; (\del f_{\nu})[I]_0\). For \(z \in (\del \sci^d)[I]_0,\;  \De^1_{\nu}(z)\in (\del e_{\nu})[I]_0\) and \(\De^2_{\nu}(z) \in (\del f_{\nu})[I]_0\); for \(s=1,2\) we define
	\begin{equation}
	\xit^s_{\nu}(z)=\left(\pst \circ \De^s_{\nu}\right)(z). \label{eq defining xit-s on boundary}
	\end{equation}
	If \(z \in \sci ^d[I]_0 \setminus \left(\del \sci^d\right)[I]_0\), we write \(z=(z',iI^{-1})\) with \(z' \in \sci^{d-1}[I]_0\) and \(1 \leq i \leq 1-I^{-1}\); for \(s=1,2\), we define
	\begin{equation}
	\xit^s_{\nu}(z', iI^{-1}) = \Om \left(\xit^s_{\nu}(z',(i-1)I^{-1}),\; \xit^s_{\nu}(z',1);\;B_i(\nu) \right). \label{eq defining xit-s on the whole cube}
	\end{equation}
    For \(v \in e_{\nu}[I]_0 \cup f_{\nu}[I]_0\), we define
	\begin{equation}
		\pst(v)=
		\begin{cases}
		\left(\xit^1_{\nu}\circ D^1_{\nu}\right)(v) & \text{if } v \in e_{\nu}[I]_0; \\
		\left(\xit^2_{\nu}\circ D^2_{\nu}\right)(v) & \text{if } v \in f_{\nu}[I]_0. 
		\end{cases}
		\label{eq defining psi-tilde on eq, fq}
	\end{equation}
	We note that by \eqref{eq defining xit-s on boundary}, \(\pst\) is well-defined on \((\del e_{\nu})[I]_0\), \((\del f_{\nu})[I]_0\). Let us also set 
	\begin{equation}
		\Xi_{\nu}(z)=\xit^1_{\nu}(z) \cap \xit^2_{\nu}(z), \; z \in \sci^d[I]_0 \label{eq defining xi}
	\end{equation}
	so that 
	\begin{equation}
		\Ps(v)=
		\begin{cases}
		\left(\Xi_{\nu}\circ D^1_{\nu}\right)(v) & \text{if } v \in e_{\nu}[I]_0; \\
		\left(\Xi_{\nu}\circ D^2_{\nu}\right)(v) & \text{if } v \in f_{\nu}[I]_0. 
		\end{cases}\label{eq defining psi on eq, fq}
	\end{equation}
We need to show that \(\pst\) defined in this way on \(\eniz,\fniz\) have the properties \((P0)_{d,\nu}-(P8)_{d, \nu}\). 

Before we start, for convenience let us introduce the following notation. 
	\[B_i=B_i(\nu),\quad A_i=M\setminus\ov{B}_i, \quad S_i=\del B_i = \del A_i = \ov{A}_i \cap \ov{B}_i.\]
	We fix \(z' \in \sci^{d-1}[I]_0\setminus \left(\del \sci^{d-1}\right)[I]_0\) and define
	\[K_i = \xit^1_{\nu}(z', iI^{-1}), \quad L_i=\xit^2_{\nu}(z', iI^{-1}),\quad \Si_i=\Xi_{\nu}(z',iI^{-1})=K_i\cap L_i.\]	
	From \eqref{eq defining xit-s on the whole cube}, for \(i=1,2,\dots, (I-1)\) we obtain,
	\begin{align}
	& K_i = (K_{i-1}\cap \ov{A}_i) \cup (K_I \cap \ov{B}_i);\nonumber\\
	&L_i = (L_{i-1}\cap \ov{A}_i) \cup (L_I \cap \ov{B}_i); \label{eq inductive defn of K_i, L_i, Si_i} \\
	& \Si_i=(\Si_{i-1}\cap \ov{A}_i)\cup (\Si_I \cap \ov{B}_i) \cup (K_{i-1}\cap L_I\cap S_i) \cup (K_I\cap L_{i-1}\cap S_i). \nonumber
	\end{align}
Using induction	one can prove the following. 
\begin{align}
	& K_i=\Big[K_0\cap \left(\cup_{s=1}^i B_s\right)^c\Big] \bigcup \Big[K_I \cap \left(\cup_{s=1}^i \ov{B}_s\right)\Big]; \nonumber\\
	& L_i=\Big[L_0\cap \left(\cup_{s=1}^i B_s\right)^c\Big] \bigcup \Big[L_I \cap \left(\cup_{s=1}^i \ov{B}_s\right)\Big]; \label{eq explicit defn of K_i, L_i}\\
	& \Si_i=\Big[\Si_0\cap \left(\cup_{s=1}^i B_s\right)^c\Big] \bigcup \Big[\Si_I \cap \left(\cup_{s=1}^i \ov{B}_s\right)\Big] \bigcup \La_i \quad \text{where} \quad \La_i \subset \bigcup_{t=1}^i S_t.\nonumber
\end{align}
For \(i>1\) we have,
\begin{equation}
\La_i=(\La_{i-1}\cap \ov{A}_i) \cup (K_{i-1}\cap L_I \cap S_i) \cup (K_I\cap L_{i-1} \cap S_i). \label{eq reccurence for La_i}
\end{equation}
From the equations in \eqref{eq explicit defn of K_i, L_i}, we conclude that the equations in \eqref{eq inductive defn of K_i, L_i, Si_i} hold for \(i=I\) as well. We want to denote the top and bottom faces of \(e_{\nu}\) and \(f_{\nu}\) by the indices \(\tau\) and \(\be \in [Q]\) i.e.
\begin{align*}
	& \Big\{\De^1_{\nu}(\sci^{d-1} \times \{1\}),\;\De^2_{\nu}(\sci^{d-1} \times \{1\})\Big\}=\{e_{\ta}, f_{\ta}\}; \\
	& \Big\{\De^1_{\nu}(\sci^{d-1} \times \{0\}),\;\De^2_{\nu}(\sci^{d-1} \times \{0\})\Big\}=\{e_{\be}, f_{\be}\}.
\end{align*}
We note that the top face of \(e_{\nu}\) could be either \(e_{\ta}\) (in that case the top face of \(f_{\nu}\) is $f_{\tau}$) or \(f_{\tau}\) (in that case the top face of \(f_{\nu}\) is $e_{\tau}$); similar remark applies for the bottom face. For the ease of the presentation, we will assume that \(e_{\ta}\) is the top face of \(e_{\nu}\) and \(e_{\be}\) is the bottom face of \(e_{\nu}\). The other cases will be entirely analogous.

We now proceed in steps to check that \(\pst\vert_{\eniz}\), \(\pst\vert_{\fniz}\) have the properties \((P0)_{d,\nu}-(P8)_{d,\nu}\).

\textbf{Step 0.} \((P0)_{d, \nu}\) is satisfied because of our definition of \(\xit_{\nu}^1\) and \(\xit_{\nu}^2\) i.e. equation \eqref{eq defining xit-s on the whole cube}.

\textbf{Step 1.} By the induction hypothesis, \(\pst\) restricted to \(e_{\be}[I]_0, \;f_{\be}[I]_0\) (and \(e_{\ta}[I]_0,\; f_{\ta}[I]_0\)) satisfy \((P1)_{(d-1), \be}\) (and \((P1)_{(d-1), \ta}\)). Hence, 
\[K_0 \cup L_0=M=K_I \cup L_I\]
which together with \eqref{eq explicit defn of K_i, L_i} imply
\[M=K_i \cup L_i.\]

\textbf{Step 2.} By our assumption, \(\pst\) restricted to \(e_{\be}[I]_0, \;f_{\be}[I]_0\) (and \(e_{\ta}[I]_0,\; f_{\ta}[I]_0\)) satisfy \((P2)_{(d-1), \be}\) (and \((P2)_{(d-1), \ta}\)). Hence,
\[K_0 \subset \bigcup_{x \in (e_{\be})_0}\pht_{\ga}(x); \quad K_{I}\subset \bigcup_{x \in (e_{\ta})_0} \pht_{\ga}(x).\]
\tf using \eqref{eq explicit defn of K_i, L_i},
\[K_i \subset \bigcup_{x \in (e_{\nu})_0}\pht_{\ga}(x)\quad \forall i \in [I].\]
Similarly, we can show that
\[L_i \subset \bigcup_{x \in (f_{\nu})_0}\pht_{\ga}(x)\quad \forall i \in [I].\]

\textbf{Step 3.} By the induction hypothesis, \newcommand{\ebz}{(e_{\be})_0}\newcommand{\etz}{(e_{\ta})_0}\newcommand{\enz}{(e_{\nu})_0}
\begin{align}
	& \del K_0 \subset \bigg\{\bigcup_{x \in \ebz} \Ph_{\ga}(x) \bigg\} \bigcup \bigg\{\bigcup_{i \in [I],\; s \in \mathscr{F}_{\be} } \del B_i(s) \bigg\}; \label{eq del K_0}\\
	& \del K_I \subset\bigg\{\bigcup_{x \in \etz} \Ph_{\ga}(x) \bigg\} \bigcup \bigg\{\bigcup_{i \in [I],\; s \in \mathscr{F}_{\ta} } \del B_i(s) \bigg\}. \label{eq del K_I}
\end{align}
Moreover, by \eqref{eq boundary} and \eqref{eq inductive defn of K_i, L_i, Si_i}
\begin{align}
 \del K_i \subset \del K_{i-1} \cup \del K_{I} \cup S_i  \Longrightarrow \del K_i \subset \del K_0 \cup \del K_I \cup \left(\cup_{j=1}^i S_j\right). \label{eq del K_i}
\end{align}
Combining \eqref{eq del K_0}, \eqref{eq del K_I} and \eqref{eq del K_i}, we obtain
\begin{equation}
	\del K_i \subset\bigg\{\bigcup_{x \in \enz} \Ph_{\ga}(x) \bigg\} \bigcup \bigg\{\bigcup_{i \in [I],\; s \in \mathscr{F}_{\be} \cup \mathscr{F}_{\ta} } \del B_i(s) \bigg\}\bigcup \bigg\{\bigcup_{t=1}^{i} \del B_t\bigg\}.\label{eq del K_i final}
\end{equation}
We can arrive at a similar conclusion for \(\del L_i\) as well.

\textbf{Step 4.} For \(j = 0,1,\dots,I\), let \(\hat{\Si}_j:=\Si_j \setminus \cS\). By the induction hypothesis \[\hat{\Si}_0=\mathbb{S}^0_1\;\dot\cup\;\mathbb{S}^0_2\;\dot\cup\;\dots\;\dot\cup\;\mathbb{S}^0_{l_0}\]
where each \(\mathbb{S}^0_j\) is an open subset of a hypersurface belonging to the set 
\begin{equation*}
\Big\{\Ph_{\ga}(x):x \in \ebz \Big\} \bigcup \Big\{\del B_i(s):i \in [I],\; s \in \mathscr{F}_{\be}\Big\};
\end{equation*}
and 
\[\hat{\Si}_I=\mathbb{S}^I_1\;\dot\cup\;\mathbb{S}^I_2\;\dot\cup\;\dots\;\dot\cup\;\mathbb{S}^I_{l_I}\]
where each \(\mathbb{S}^I_j\) is an open subset of a hypersurface belonging to the set
\begin{equation}\label{eq induction hyp on Si_I-hat}
\Big\{\Ph_{\ga}(x):x \in \etz \Big\} \bigcup \Big\{\del B_i(s):i \in [I],\; s \in \mathscr{F}_{\ta} \Big\}. 
\end{equation}
By induction let us assume that 
\[\hat{\Si}_{i-1}=\mathbb{S}^{i-1}_1\;\dot\cup\;\mathbb{S}^{i-1}_2\;\dot\cup\;\dots\;\dot\cup\;\mathbb{S}^{i-1}_{l_{i-1}}\]
where each \(\mathbb{S}^{i-1}_j\) is an open subset of a hypersurface belonging to the set
\begin{equation}
	\Big\{\Ph_{\ga}(x):x \in \enz \Big\} \bigcup \Big\{\del B_i(s):i \in [I],\; s \in \mathscr{F}_{\be} \cup \mathscr{F}_{\ta} \Big\}\bigcup \Big\{\del B_t: t\in [i-1]\Big\}.\label{eq induction hypothesis for Si-hat-i}
\end{equation}
Then,
\begin{equation}
	(\Si_{i-1} \cap \ov{A}_i)\setminus \cS = (\hat{\Si}_{i-1} \cap \ov{A}_i)\setminus \cS = (\hat{\Si}_{i-1} \cap A_i)\setminus \cS \label{eq Si-hat cap A_i}
\end{equation}
as by \eqref{eq induction hypothesis for Si-hat-i} \(\hat{\Si}_{i-1}\cap \del B_i \subset \cS.\) Further, \((\hat{\Si}_{i-1} \cap A_i)\setminus \cS\) is an open subset of \(\hat{\Si}_{i-1}\) as \(A_i \subset M\) is open and \(\cS \subset M\) is closed. Similarly, using \eqref{eq induction hyp on Si_I-hat},
\begin{equation}
	(\Si_{I} \cap \ov{B}_i)\setminus \cS = (\hat{\Si}_{I} \cap B_i)\setminus \cS \label{eq Si_I cap A_i}
\end{equation}
which is an open subset of \(\hat{\Si}_{I}\).
\begin{equation}
	(K_{i-1} \cap L_I \cap S_i)\setminus \cS = (int(K_{i-1} \cap L_I )\cap S_i)\setminus \cS \label{eq S_i term one}
\end{equation}
as by \eqref{eq boundary}, \(\del(K_{i-1} \cap L_I) \subset \del K_{i-1} \cup \del L_I\); by \eqref{eq del K_i final} and by the induction hypothesis \((P3)_{(d-1), \ta}\),
\[(\del K_{i-1} \cup \del L_I)\cap S_i \subset \cS.\]
Similarly,
\begin{equation}
(K_{I} \cap L_{i-1} \cap S_i)\setminus \cS = (int(K_{I} \cap L_{i-1} )\cap S_i)\setminus \cS. \label{eq S_i term two}
\end{equation}
Moreover,
\[M = B_i \;\dot\cup\; S_i\; \dot\cup \;A_i.\]
Hence, using \eqref{eq inductive defn of K_i, L_i, Si_i} and equations \eqref{eq induction hyp on Si_I-hat} -- \eqref{eq S_i term two}, we obtain \[\hat{\Si}_{i}=\mathbb{S}^{i}_1\;\dot\cup\;\mathbb{S}^{i}_2\;\dot\cup\;\dots\;\dot\cup\;\mathbb{S}^{i}_{l_{i}}\]
 where each \(\mathbb{S}^{i}_j\) is an open subset of a hypersurface belonging to the set
 \begin{equation}
 \Big\{\Ph_{\ga}(x):x \in \enz \Big\} \bigcup \Big\{\del B_i(s):i \in [I],\; s \in \mathscr{F}_{\be} \cup \mathscr{F}_{\ta} \Big\}\bigcup \Big\{\del B_t: t\in [i]\Big\}.
 \end{equation}

\textbf{Step 5.}
\sps \(v, v'\in (\del e_{\nu})[I]_0\). Let \(\rh \in [Q]\) \st \(\la_{\rh} \prec e_{\nu}\) (\(\la_{\rh}\) could be \(e_{\rh}\) or \(f_{\rh}\)) with \(\dim(\la_{\rh})=d-1\) and \(v, v' \in \la_{\rh}[I]_0\). Then, by the inductive hypothesis \((P5)_{(d-1),\rh}\) and by the compatibility relation \eqref{eq compatibility}, we get \((P5)_{d, \nu}\) for this case.

Now, we assume that \(v \notin (\del e_{\nu})[I]_0\); \(D^1_q(v)=(z_1,z_2,\dots ,z_d)=(z',z_d)\), \(D^1_q(v')=(z'_1,z'_2,\dots ,z'_d)=(z'',z'_d)\). If \(z_d \neq z'_d\), using \eqref{eq inductive defn of K_i, L_i, Si_i} we get \((P5)_{d, \nu}\). Otherwise, \(z'_d=z_d\) and there are two possibilities: either \(v' \notin (\del e_{\nu})[I]_0\) or \(v' \in (\del e_{\nu})[I]_0\). Let us assume the second possibility (the other case is similar), i.e. there exists \(\rh \in [Q]\) \st \(\la_{\rh} \prec e_{\nu}\) (\(\la_{\rh}=e_{\rh} \text{ or } f_{\rh}\)) with \(\dim(\la_{\rh})=d-1\) and \(v' \in \la_{\rh}[I]_0\). We define, 
\[B'_i=B_i(\rh),\quad A'_i=M\setminus\ov{B'_i}, \quad S'_i=\del B'_i = \del A'_i ;\]
\[K'_i = \xit^1_{\nu}(z'', iI^{-1}), \quad L'_i=\xit^2_{\nu}(z'', iI^{-1}),\quad \Si'_i=\Xi_{\nu}(z'',iI^{-1})=K'_i\cap L'_i.\]
Using \((P0)_{(d-1),\rh}\) and the compatibility relation \eqref{eq compatibility}, we can deduce the following relations similar to those in \eqref{eq explicit defn of K_i, L_i}.
\begin{align}
& K'_i=\Big[K'_0\cap \left(\cup_{s=1}^i B'_s\right)^c\Big] \bigcup \Big[K'_I \cap \left(\cup_{s=1}^i \ov{B'_s}\right)\Big]; \nonumber\\
& L'_i=\Big[L'_0\cap \left(\cup_{s=1}^i B'_s\right)^c\Big] \bigcup \Big[L'_I \cap \left(\cup_{s=1}^i \ov{B'_s}\right)\Big]; \label{eq explicit defn of K'_i, L'_i}\\
& \Si'_i=\Big[\Si'_0\cap \left(\cup_{s=1}^i B'_s\right)^c\Big] \bigcup \Big[\Si'_I \cap \left(\cup_{s=1}^i \ov{B'_s}\right)\Big] \bigcup \La'_i \quad \text{where} \quad \La'_i \subset \bigcup_{t=1}^i S'_t.\nonumber
\end{align}
\((P5)_{(d-1),\be}\) and \((P5)_{(d-1),\ta}\) imply
\begin{align}
	& \Si_0\cap \left(M\setminus(\cA \cup \cB)\right)  = \Si'_0\cap \left(M\setminus(\cA \cup \cB)\right); \nonumber\\
	& \Si_I\cap \left(M\setminus(\cA \cup \cB)\right) = \Si_I'\cap \left(M\setminus(\cA \cup \cB)\right); \label{P5 Si_I}
\end{align}
where \(\cB=\bbb_j\); \(j\) is such that if \(z_l \neq z'_l\), \(\{z_l,z'_l\}=\{(j-1)I^{-1}, jI^{-1}\}\). We note that for all \(s \in [I]\),
\begin{equation*}
B_s \;\De\; B'_s,\; \del B_s,\; \del B'_s,\; \subset \cA.
\end{equation*}
Hence,
\begin{align}
 \left(\cup_{s=1}^i B_s\right)^c \De \left(\cup_{s=1}^i B'_s\right)^c \subset \cA \implies &\left(\cup_{s=1}^i B_s\right)^c \cap (\cA \cup \cB)^c = \left(\cup_{s=1}^i B'_s\right)^c \cap (\cA \cup \cB)^c;\nonumber \\
 \left(\cup_{s=1}^i \ov{B_s}\right) \De \left(\cup_{s=1}^i \ov{B'_s}\right) \subset \cA \implies & \left(\cup_{s=1}^i \ov{B_s}\right)\cap (\cA \cup \cB)^c = \left(\cup_{s=1}^i \ov{B'_s}\right)\cap (\cA \cup \cB)^c;\nonumber\\
 \La_i \cap (\cA \cup \cB)^c &= \emptyset = \La_i'\cap (\cA \cup \cB)^c \label{P5 intersection w. A^c}
\end{align}
Using the equations \eqref{eq explicit defn of K_i, L_i}, \eqref{eq explicit defn of K'_i, L'_i}, \eqref{P5 Si_I} and \eqref{P5 intersection w. A^c}, we obtain 
\[\Si_i\cap \left(M\setminus(\cA \cup \cB)\right)  = \Si'_i\cap \left(M\setminus(\cA \cup \cB)\right).\]
for all \(i \in [I]\). By a similar argument, one can also show that 
\[K_i\cap \left(M\setminus(\cA \cup \cB)\right)  = K'_i\cap \left(M\setminus(\cA \cup \cB)\right);\]
and 
\[L_i\cap \left(M\setminus(\cA \cup \cB)\right)  = L'_i\cap \left(M\setminus(\cA \cup \cB)\right).\]

\textbf{Step 6.} Using \eqref{eq reccurence for La_i}, for \(i>1\),
\begin{equation}
	\cH^n(\La_i) \leq \cH^n(\La_{i-1})+\cH^n(K_{i-1}\cap L_{I} \cap S_i) + \cH^n(K_I\cap L_{i-1} \cap S_i). \label{eq H^n of La_i}
\end{equation}
By \((P2)_{d,\nu}\),
\begin{equation*}
	K_{i-1} \subset \bigcup_{x \in \enz} \pht_{\ga}(x), \quad L_I \subset \bigcup_{x \in (f_{\nu})_0} \pht_{\ga}(x).
\end{equation*}
Hence,\newcommand{\phtg}{\pht_{\ga}}\newcommand{\fnz}{(f_{\nu})_0}
\begin{equation}
	\cH^n(S_i \cap K_{i-1} \cap L_I) \leq \sum_{\substack{x_1 \in \enz,\\ x_2 \in (f_{\nu})_0}} \cH^n\left(S_i \cap \pht_{\ga}(x_1) \cap \phtg(x_2)\right). \label{eq H^n Si cap dots 1}
\end{equation}
For \(x_1 \in \enz,\; x_2 \in \fnz,\)
\begin{align}
	\cH^n\left(S_i \cap \pht_{\ga}(x_1) \cap \phtg(x_2)\right) &= \cH^n\left(S_i \cap \pht_{\ga}(x_1) \cap \phtg(x_2) \cap \phtg(T(x_2))\right) \nonumber\\
	& + \cH^n\left(S_i \cap \pht_{\ga}(x_1) \cap \phtg(x_2) \cap \left(M -\phtg(T(x_2))\right)\right) \nonumber \\
	& = 0 + \cH^n\left(S_i \cap \left(\pht_{\ga}(x_1)-\phtg(T(x_2)\right)\right) \nonumber \\
	& <\frac{2^{2d}\et}{I}. \label{eq H^n Si cap dots 2}
\end{align}
In the second equality we have used the fact that \(S_i\) is transverse to \(\Ph_{\ga}(x_2)=\phtg(x_2) \cap \phtg(T(x_2))\) (which was assumed in the definition of \(r_i(q)\)) and \((P1)_{d,\nu}\). In the last inequality we have used Proposition \ref{pro existence of alpha}, item (v). \tf combining \eqref{eq H^n of La_i} -- \eqref{eq H^n Si cap dots 2}, we obtain
\begin{equation*}
\cH^n(\La_i) < \cH^n(\La_{i-1})+\frac{2^{4d+1}\et}{I}\; \text{ for } i>1.
\end{equation*}
Moreover, by the above argument,
\begin{equation*}
	\cH^n(\La_1) < \frac{2^{4d+1}\et}{I}.
\end{equation*}
Hence, for all \(i \in [I],\)
\begin{equation}
\cH^n(\La_i) < \frac{2^{4d+1}\et i}{I}\leq 2^{4d+1}\et. \label{eq H^n La_i final}
\end{equation}
Using this inequality along with \eqref{eq explicit defn of K_i, L_i} and \((P6)_{(d-1),\be},\;(P6)_{(d-1),\ta}\), we conclude that for all \(j \in [I],\) 
\begin{align}
	\cH^n\left(\Si_i \cap \ov{\bbb_j^0}\right) & \leq \cH^n\left(\Si_0 \cap \ov{\bbb_j^0}\right) + \cH^n\left(\Si_I \cap \ov{\bbb_j^0}\right) + \cH^n(\La_i) \nonumber \\
	& <(2^{4d-2}+2^{4d-2}+2^{4d+1})\et\nonumber\\
	& < 2^{4d+2}\et. \label{eq proving P6}
\end{align}

\textbf{Step 7.} Using \eqref{eq H^n La_i final}, \eqref{eq explicit defn of K_i, L_i} and \((P7)_{(d-1),\be},\;(P7)_{(d-1),\ta}\), we obtain as in Step 6,
\begin{align}
\cH^n\left(\Si_i \cap \ov{\cA}_1\right) & \leq \cH^n\left(\Si_0 \cap \ov{\cA}_1\right) + \cH^n\left(\Si_I \cap \ov{\cA}_1\right) + \cH^n(\La_i) \nonumber \\
& <(2^{4d-2}+2^{4d-2}+2^{4d+1})\et \nonumber\\
& < 2^{4d+2}\et. \label{eq proving P7}
\end{align}

\textbf{Step 8.} We note that for all \(s \in [I]\),
\[\ov{\bbb}_s^c \subset B_s^c \subset \ov{\bbb}_s^c \cup \ov{\cA}; \qquad \ov{B}_s \subset \bbb_s \subset \ov{B}_s \cup \ov{\cA}.\]
\tf for all \(i \in [I]\),
\begin{align*}
& \Big(\bigcup_{s=1}^i\ov{\bbb}_s\Big)^c \subset \Big(\bigcup_{s=1}^iB_s\Big)^c \subset \Big(\bigcup_{s=1}^i\ov{\bbb}_s\Big)^c \cup \ov{\cA};\\
& \bigcup_{s=1}^i\ov{B}_s \subset \bigcup_{s=1}^i\bbb_s \subset \Big(\bigcup_{s=1}^i\ov{B}_s\Big) \cup \ov{\cA}.
\end{align*}
Hence, using \eqref{eq explicit defn of K_i, L_i}, we deduce the following inequalities.
\begin{align}
&\cH^n(\Si_i \cap R) \leq \cH^n\left(\Si_0\cap \left(\cup_{s=1}^i\ov{\bbb}_s\right)^c\cap R\right)+\cH^n\left(\Si_0 \cap \ov{\cA}\right)+\cH^n\left(\Si_I\cap \left(\cup_{s=1}^i\bbb_s\right)\cap R\right) + \cH^n(\La_i);\label{inequ 1}\\
& \cH^n(\Si_i \cap R) \geq \cH^n\left(\Si_0\cap \left(\cup_{s=1}^i\ov{\bbb}_s\right)^c\cap R\right)+\cH^n\left(\Si_I\cap \left(\cup_{s=1}^i\bbb_s\right)\cap R\right)-\cH^n\left(\Si_I \cap \ov{\cA}\right).\label{inequ 2}
\end{align}
Denoting
\begin{equation}
	R_1 = \left(\cup_{s=1}^i\ov{\bbb}_s\right)^c\cap R, \qquad R_2 = \left(\cup_{s=1}^i\bbb_s\right)\cap R, \label{eq defn of R1, R2}
\end{equation}
using \eqref{inequ 1}, \eqref{inequ 2}, \((P7)_{(d-1),\be}\), \((P7)_{(d-1),\ta}\) and \eqref{eq H^n La_i final} we obtain,
\begin{align}
	&\left|\cH^n(\Si_i \cap R)-\cH^n(\Si_0 \cap R_1)-\cH^n(\Si_I \cap R_2)\right|\nonumber\\
	& \leq \cH^n\left(\Si_0 \cap \ov{\cA}\right) + \cH^n\left(\Si_I \cap \ov{\cA}\right) + \cH^n(\La_i) \nonumber \\
	& \leq (2.2^{4d-2}+2^{4d+1})\et.\label{inequ 3}
\end{align}
Let \(x_0 \in \enz\); without loss of generality we can assume that \(x_0 \in \ebz\) i.e. \(D^1_{\nu}(x_0)=(\xi, 0)\) for some \(\xi \in \sci^{d-1}\). Let \(x_1\in\etz \) \st \(x_1=\De^1_{\nu}(\xi,1).\)
We note that \(R \in \mathscr{R}\) implies \(R_1, R_2 \in \mathscr{R}\) as well. Further, by the definition of \(\bbb_s\), \(\norm{\Ph_{\ga}(x_0)}(\del \bbb_s)=0\); hence,
\[\cH^n\left(\Ph_{\ga}(x_0)\cap R\right)=\cH^n\left(\Ph_{\ga}(x_0)\cap R_1\right) + \cH^n\left(\Ph_{\ga}(x_0)\cap R_2\right).\]
\tf using \(\eqref{inequ 3}\), \((P8)_{(d-1),\be}\), \((P8)_{(d-1), \ta}\), Proposition \ref{pro existence of alpha} (i) and \eqref{cond: estimating mass norm} we get the following estimate.
\begin{align}
	&\left|\cH^n(\Si_i \cap R)-\cH^n\left(\Ph_{\ga}(x_0) \cap R\right)\right| \nonumber\\
	& \leq \left|\cH^n(\Si_i \cap R)-\cH^n(\Si_0 \cap R_1)-\cH^n(\Si_I \cap R_2)\right| + \left|\cH^n(\Si_0 \cap R_1)-\cH^n\left(\Ph_{\ga}(x_0) \cap R_1\right)\right| \nonumber\\
	& + \left|\cH^n(\Si_I \cap R_2)-\cH^n\left(\Ph_{\ga}(x_1) \cap R_2\right)\right| + \left|\cH^n\left(\Ph_{\ga}(x_1) \cap R_2\right)-\cH^n\left(\Ph_{\ga}(x_0) \cap R_2\right)\right|\nonumber\\
	& <(2^{4d-1}+2^{4d+1}+2^{4d-2}+2^{4d-2}+3)\et\nonumber\\
	& <2^{4d+2}\et. \label{eq proving P8}
\end{align}
Since \(M \in \mathscr{R}\), \eqref{eq proving P8} and Proposition \ref{pro existence of alpha}, (ii) imply
\[\cH^n(\Si_i)\leq L+(2^{4d+2}+2)\et.\]
\end{proof}
\subsection{Approximate solution of the Allen-Cahn equation}
Here we will briefly discuss an approximate solution of the Allen-Cahn equation whose energy is concentrated in a tubular neighbourhood of a closed, two-sided hypersurface (with mild singularities). We will follow the paper by Guaraco \cite{Guaraco}; further details can be found there.

Let \(h: \bbr \ra \bbr\) be the unique solution of the following ODE.
\[\dot{\vp}(t)= \sqrt{2W(\vp(t))};\quad \vp(0)=0.\]
For all \(t \in \bbr\), \(-1<h(t)<1\) and as \(t \ra \pm \infty\), \((h(t)\mp 1)\) converges to zero exponentially fast.

\(h_{\ve}(t)=h(t/\ve)\) is a solution of the one dimensional Allen-Cahn equation
\[\ve^2\ddot{\vp}(t)=W'(\vp(t))\]
with finite total energy:
\[\int_{-\infty}^{\infty}\left(\frac{\ve}{2}\dot{h}_{\ve}(t)^2+W(h_{\ve}(t))\right) dt=2\si.\]
For $\ve >0$, we define Lipschitz continuous function \newcommand{\he}{h_{\ve}}\newcommand{\se}{\sqrt{\ve}}\newcommand{\ue}{u_{\ve}}
\begin{equation*}
g_{\ve}(t)=
\begin{cases}
h_{\ve}(t) & \text{if } |t|\leq \sqrt{\ve};\\
h_{\ve}(\se)+\left(\frac{t}{\se}-1\right)(1-\he(\se)) &\text{if } \se \leq t \leq 2 \se;\\
1 &\text{if } t\geq 2\se;\\
\he(-\se)+ \left(\frac{t}{\se}+1\right)(1+\he(-\se)) &\text{if } -2\se \leq t \leq -\se;\\
-1 &\text{if } t\leq -2\se.
\end{cases}
\end{equation*}
Suppose \(d:M \ra \bbr\) is a Lipschitz continuous function \st \(\norm{\nabla d}=1\) a.e. Let \(\ue=g_{\ve}\circ d\), \(U \subset M\). Using the notation
\[E_{\ve}(u,U)=\int_U\ve \frac{|\nabla u|^2}{2}+\frac{W(u)}{\ve},\]
we compute
\begin{align}
E_{\ve}(u_{\ve},U) & = \int_{U \cap \{|d|\leq 2\se\}} \left[\frac{\ve}{2}\dot{g}_{\ve}(d(y))^2+\frac{1}{\ve}W\Big(g_{\ve}(d(y))\Big)\right] \; d\cH^{n+1}(y) \nonumber \\
& = \int_{|\ta|\leq 2\se} \left[\frac{\ve}{2}\dot{g}_{\ve}(\ta)^2+\frac{1}{\ve}W(g_{\ve}(\ta))\right] \cH^n \left(U \cap \{d=\ta\}\right) d\ta \nonumber \\
& \leq \int_{|\ta|\leq \se} \left[\frac{\ve}{2}\dot{h}_{\ve}(\ta)^2+\frac{1}{\ve}W(h_{\ve}(\ta))\right] \cH^n \left(U \cap \{d=\ta\}\right) d\ta \label{estimate energy of u_ep1} \\
& + \left(\frac{1}{2}(1-\he(\se))^2+\frac{1}{\ve}W(\he(\se))\right)\vol(M,g) \label{estimate energy of u_ep2}
\end{align}
Let 
\[\La_{\ve} = \sup_{|\ta|\leq \se} \cH^n\left(U \cap \{d = \ta\}\right).\]
Then, the integral in \eqref{estimate energy of u_ep1} is bounded by \(2 \si \La_{\ve}\). Further, there exists \(\ve_0=\ve_0(W,g,\et)\) \st for all \(\ve \leq \ve_0\) the expression in \eqref{estimate energy of u_ep2} is bounded by  \(2\si \et\). Hence,
\begin{equation}
	\label{estimate energy of u_ep final}
	E_{\ve}(u_{\ve},U) \leq 2\si(\La_{\ve} + \et)\; \forall \ve \leq \ve_0.
\end{equation}
For \(v \in \xtki\), we define \(d_v:M \ra \bbr\) as follows.
\begin{equation}
d_v(p)=
\begin{cases}
 -d(p,\Ps(v)) &\text{if } p \in \pst(v);\\
 d(p,\Ps(v)) &\text{if } p \in \pst(T(v)).
\end{cases}
\label{def d_v}
\end{equation}
By the definition of \(\Ps(v)\) and \((P1)\) of Proposition \ref{pro big}, \(d_v\) is a well-defined continuous function on \(M\). Moreover, by \cite{Guaraco}*{Proposition 9.1}, \(d_v\) is Lipschitz continuous and \(\norm{\nabla d_v}=1\) a.e.

Let \(p \in U \cap \{d_v = \tau\}\). Then, either \(d(p,\cS)=\md{\ta}\) or there exists \(z \in (\Ps(v)\setminus \cS)\) with \(d(p,z)=\md{\ta}\) \st \(p = \exp_z(\ta \mathbf{n}(z))\) where \(\mathbf{n}(z)\) is the unit normal to \(\Ps(v)\setminus \cS\) at \(z\) pointing inside \(\pst(T(v))\). We must have
\[z \in \cN_{\md{\ta}}(U)\cap \left(\Ps(v)\setminus \cS\right).\]
Hence, we can write
\begin{equation}
\cH^n\left(U\cap \{d_v=\ta\}\right) \leq \cH^n(\cT_{\md{\ta}}(\cS))+\int_{\cN_{\md{\ta}}(U)\cap (\Ps(v)\setminus \cS)}\left|J \exp_{z}(\ta \mathbf{n}(z))\right|\; d\cH^n(z) 
\label{inequ H^n of d_v=tau}
\end{equation}
where \(J \exp_{z}(\ta \mathbf{n}(z))\) is the Jacobian factor of the map \(z \mapsto \exp_{z}(\ta \mathbf{n}(z))\) which can be estimated as follows. 

Let
\[\mathscr{S}'=\big\{\Ph_{\ga}(x): x \in \tx[K]_0\big\} \bigcup \left\{\del B_{i}(q):i \in [I], q \in [Q]\right\}.\]
Using \((P4)\) of Proposition \ref{pro big} and \cite{warner}*{Corollary 4.2, Theorem 4.3}, \cite{Guaraco}*{Proposition 9.4}, one can deduce the following estimate. Let \(\la>0\) be \st
\[ I\!I_{\Si}(v,v)\leq \la \langle v,v \rangle \; \forall\; \Si \in \mathscr{S}', \; v \in T\Si.\]
Here \(I\!I_{\Si}\) denotes the second fundamental form of \(\Si.\) Then, there exist \(\ta_0\) and \(C_1\) depending only on \(\la\), the ambient dimension \(n+1\) and \(g\) \st
\[\left|J \exp_{z}(\ta \mathbf{n}(z))\right|\leq (1+C_1|\ta|) \; \forall z \in \Ps(v)\setminus \cS.\]
Hence, using \eqref{eq area of equidistant hypersurface} and \eqref{inequ H^n of d_v=tau}, there exists \(\ta_1=\ta_1(\mathscr{S},\mathscr{S}',n,g)\) \st for all \(\md{\ta}\leq \ta_1\)
\[\cH^n\left(U\cap \{d_v=\ta\}\right) \leq C|\ta|+(1+C_1|\ta|)\cH^n\left(\cN_{\md{\ta}}(U) \cap \Ps(v) \right).\]
This estimate along with \((P6)\) and \((P7)\) of Proposition \ref{pro big}, \eqref{H^n of Psi(v)} and \eqref{estimate energy of u_ep final} gives the following proposition. We recall that \(k\) is the dimension of the parameter spaces \(X\) and \(\tx\).
\begin{pro}\label{pro est energy of g_ep circ d_v}
	There exists \(\ve_1=\ve_1(\et,\ta_1,\ve_0,\de,r_0-r_1)\) \st for all \(v \in \xtki\), \(\ve \leq \ve_1\) and \(i \in [I]\), denoting \(\vartheta_{\ve}^v=g_{\ve}\circ d_v\), we have
	\[E_{\ve}\left(\vartheta_{\ve}^v,\bbb_i^1\right) \leq 2\si (2^{4k+2}+2)\et; \; E_{\ve}\left(\vartheta_{\ve}^v,\cA_2\right) \leq 2\si(2^{4k+2}+2)\et; \; E_{\ve}\left(\vartheta_{\ve}^v,M\right) \leq 2\si(L+ (2^{4k+2}+4)\et).\] 	
\end{pro}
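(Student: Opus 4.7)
The plan is to apply the one-dimensional Allen-Cahn estimate \eqref{estimate energy of u_ep final} with $u_{\ve} = \vartheta_{\ve}^v = g_{\ve}\circ d_v$ three times, once for each choice of $U \in \{\bbb_i^1,\cA_2,M\}$. For each $U$ the slicing quantity $\La_{\ve} = \sup_{|\ta|\leq\se}\cH^n(U\cap\{d_v=\ta\})$ will be controlled by feeding the Jacobian bound \eqref{inequ H^n of d_v=tau} into the corresponding mass estimate $(P6)$, $(P7)$, or $(P8)$ of Proposition \ref{pro big}, applied with $\nu$ equal to the (smallest) cell of $\xtk$ whose $[I]$-refinement contains $v$. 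Since $\dim(e_{\nu})=\dim(f_{\nu})=m\leq k$, the constants $2^{4m+2}$ in those properties are uniformly bounded by $2^{4k+2}$.

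First I fix $\ve_1 > 0$ small enough that the following four conditions hold simultaneously: (i) $\ve_1\leq\ve_0$, so \eqref{estimate energy of u_ep final} is available; (ii) $\sqrt{\ve_1}\leq\ta_1$, so that both the Jacobian bound $|J\exp_z(\ta\mathbf{n}(z))|\leq 1+C_1|\ta|$ and the area bound \eqref{eq area of equidistant hypersurface} apply for $|\ta|\leq\se$; (iii) $\sqrt{\ve_1}<r_0-r_1$ and $\sqrt{\ve_1}<\de/2$, giving the geometric inclusions $\cN_{\se}(\bbb_i^1)\subset\ov{\bbb_i^0}$ and $\cN_{\se}(\cA_2)\subset\ov{\cA_1}$; and (iv) $C\sqrt{\ve_1}+C_1\sqrt{\ve_1}\bigl(L+(2^{4k+2}+2)\et\bigr)\leq\et$, so that the linear-in-$|\ta|$ error terms in \eqref{inequ H^n of d_v=tau} are absorbed into a single extra factor of $\et$.

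For $U=\bbb_i^1$, combining \eqref{inequ H^n of d_v=tau}, inclusion (iii), and $(P6)_{m,\nu}$ gives
\[
\cH^n(\bbb_i^1\cap\{d_v=\ta\})\leq C|\ta|+(1+C_1|\ta|)\cdot 2^{4k+2}\et \quad \text{for all } |\ta|\leq\se,\ \ve\leq\ve_1,
\]
so by (iv), $\La_{\ve}\leq(2^{4k+2}+1)\et$, and \eqref{estimate energy of u_ep final} yields $E_{\ve}(\vartheta_{\ve}^v,\bbb_i^1)\leq 2\si(2^{4k+2}+2)\et$. The case $U=\cA_2$ is identical with $(P7)_{m,\nu}$ replacing $(P6)_{m,\nu}$ and the second inclusion in (iii) replacing the first. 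For $U=M$, the global bound \eqref{H^n of Psi(v)} supplied by $(P8)_{m,\nu}$ replaces the local one: $\La_{\ve}\leq L+(2^{4k+2}+2)\et+\et$, and \eqref{estimate energy of u_ep final} gives $E_{\ve}(\vartheta_{\ve}^v,M)\leq 2\si\bigl(L+(2^{4k+2}+4)\et\bigr)$.

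No substantive obstacle is expected: the three hard ingredients (the one-dimensional Allen-Cahn computation producing \eqref{estimate energy of u_ep final}, the Jacobian/tubular-neighbourhood bound \eqref{inequ H^n of d_v=tau}, and the $\cH^n$-measure control on $\Ps(v)$ encoded in $(P6)$--$(P8)$) have all been carried out already. The only delicate point is the bookkeeping in (i)--(iv) above, namely choosing $\ve_1$ small enough that the two geometric inclusions hold and the accumulated $O(\sqrt{\ve})$ error from the Jacobian factor and the singular strata $\cT_{|\ta|}(\cS)$ is absorbed into one additional copy of $\et$.
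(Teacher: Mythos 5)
Your proposal is correct and follows exactly the route the paper takes; the paper merely asserts that \eqref{estimate energy of u_ep final}, \eqref{inequ H^n of d_v=tau}, $(P6)$, $(P7)$ and \eqref{H^n of Psi(v)} ``give the following proposition,'' while you have made the bookkeeping explicit (the geometric inclusions $\cN_{\sqrt{\ve}}(\bbb_i^1)\subset\ov{\bbb_i^0}$, $\cN_{\sqrt{\ve}}(\cA_2)\subset\ov{\cA}_1$, and the absorption of the $O(\sqrt{\ve})$ error into a single copy of $\et$), all of which check out.
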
\vspace{-3ex}
We recall from Section 3.2 that \(f:M \ra [1/3,2/3]\) is a Morse function with no non-global local maxima or minima; hence the map \(t \mapsto f^{-1}(t)\) is continuous in the Hausdorff topology. Following \cite{Guaraco}*{Section 7}, we define the following functions. For \(t \in [1/3,2/3]\), let
\[d^{(t)}(p)=
\begin{cases}
-d\left(p, f^{-1}(t)\right) & \text{if } f(p)\leq t;\\
d\left(p,f^{-1}(t)\right) & \text{if } f(p) \geq t.
\end{cases}\]
We define \(w_{\ve}:[0,1] \ra H^1(M)\) as follows.
\begin{equation}w_{\ve}(t)=
\begin{cases}
g_{\ve}\circ d^{(t)} & \text{if } \frac{1}{3}\leq t \leq \frac{2}{3}; \\
1-3t(1-w_{\ve}(1/3)) & \text{if } 0 \leq t \leq \frac{1}{3};\\
-1+3(1-t)(1+w_{\ve}(2/3)) & \text{if } \frac{2}{3}\leq t\leq 1.
\end{cases}\label{w_ep}
\end{equation}
Since \(f^{-1}(t)\) varies continuously in the Hausdorff topology, \(w_{\ve}:[0,1] \ra H^1(M)\) is continuous \cite{Guaraco}*{Proposition 9.2}. The following proposition follows from \cite{Guaraco}*{Section 9.6} and the estimates \eqref{eq: mass of modified morse fn level set}, \eqref{estimate energy of u_ep final}.
\begin{pro}\label{pro est energy of w_t}
	There exists \(\ve_2=\ve_2(\et,f,\ve_0,\de,r_0-r_1)\) \st for all \(t \in [0,1]\), \(\ve \leq \ve_2\) and \(i \in [I]\), we have
\[E_{\ve}\left(w_{\ve}(t),\bbb_i^1\right) \leq 6\si\et; \quad E_{\ve}\left(w_{\ve}(t),\cA_2\right) \leq 6\si\et.\]	
\end{pro}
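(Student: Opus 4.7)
I would split on whether $t \in [1/3, 2/3]$ or $t$ lies in the linear-interpolation regions $[0, 1/3] \cup [2/3, 1]$.

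In the main case $t \in [1/3, 2/3]$, we have $w_\ve(t) = g_\ve \circ d^{(t)}$ and, as recorded in the discussion preceding Proposition \ref{pro est energy of g_ep circ d_v} (invoking \cite{Guaraco}*{Proposition 9.1} for the Morse signed distance), $d^{(t)}$ is Lipschitz with $|\nabla d^{(t)}| = 1$ a.e. Hence \eqref{estimate energy of u_ep final} applied to $U \in \{\bbb_i^1, \cA_2\}$ gives, for $\ve \leq \ve_0$,
\[
E_\ve(w_\ve(t), U) \leq 2\si(\La_\ve + \et), \qquad \La_\ve := \sup_{|\ta| \leq \sqrt\ve} \cH^n\bigl(U \cap \{d^{(t)} = \ta\}\bigr).
\]
I would then bound $\La_\ve < 2\et$ as follows. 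A.e.\ point $p \in \{d^{(t)} = \ta\}$ can be written as $\exp_z(\ta \mathbf{n}(z))$ for some $z$ in the regular part of $f^{-1}(t)$ lying in $\cN_{|\ta|}(U)$, and on the regular part the Jacobian of the normal exponential map is bounded by $1 + C|\ta|$, so
\[
\La_\ve \leq (1 + C\sqrt\ve)\, \cH^n\bigl(\cN_{\sqrt\ve}(U) \cap f^{-1}(t)\bigr) + o_\ve(1).
\]
For $\ve_2$ chosen so that $\sqrt\ve < \min\{r_0 - r_1, \de/2\}$, we have $\cN_{\sqrt\ve}(\bbb_i^1) \subset \bbb_i^0$ and $\cN_{\sqrt\ve}(\cA_2) \subset \cA_1$; combining with \eqref{eq: mass of modified morse fn level set} (summing the per-annulus bound $\et/I$ over $i \in [I]$ in the $\cA_2$ case) yields $\cH^n(\cN_{\sqrt\ve}(U) \cap f^{-1}(t)) < \et$, hence $\La_\ve < 2\et$ and $E_\ve(w_\ve(t), U) < 6\si\et$.

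For $t \in [0, 1/3]$, I would write $w_\ve(t) = 1 - 3t\,\phi$ with $\phi := 1 - w_\ve(1/3) \in [0, 1]$, so that $|\nabla w_\ve(t)|^2 = (3t)^2 |\nabla w_\ve(1/3)|^2 \leq |\nabla w_\ve(1/3)|^2$. Because $W$ has a nondegenerate minimum at $1$, one has $W(w_\ve(t)) \leq C\, W(w_\ve(1/3))$ pointwise on the set where $w_\ve(1/3) \neq 1$, and $W(w_\ve(t)) = 0$ elsewhere; thus $E_\ve(w_\ve(t), U) \leq \mathrm{const}\cdot E_\ve(w_\ve(1/3), U)$, and the main case at $t = 1/3$ closes the estimate after possibly shrinking $\ve_2$. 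The case $t \in [2/3, 1]$ is entirely symmetric.

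The main technical obstacle is the level-set bound at critical values of $f$, where $f^{-1}(t)$ has isolated conical singularities and the normal exponential map degenerates. A local Morse-lemma calculation (carried out in \cite{Guaraco}*{Section 9.6}) shows that each such singularity contributes to $\cN_{\sqrt\ve}(U) \cap f^{-1}(t)$ an $\cH^n$-amount of order $O(\sqrt\ve)^{\alpha}$ for some $\alpha > 0$, which is absorbed into the $o_\ve(1)$ term once $\ve_2$ is taken small enough. Once this is in hand, the rest is a routine assembly of \eqref{estimate energy of u_ep final}, the Morse level-set bound \eqref{eq: mass of modified morse fn level set}, and the coarea formula.
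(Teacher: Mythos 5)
Your proposal takes essentially the same route as the paper, which disposes of this proposition in a single sentence by citing \cite{Guaraco}*{Section 9.6} together with the level-set bound \eqref{eq: mass of modified morse fn level set} and the energy estimate \eqref{estimate energy of u_ep final}; you simply unpack that deferral, and the split into the $g_\ve\circ d^{(t)}$ regime and the affine tails, the bound $\La_\ve<2\et$ via the normal-exponential Jacobian, the containments $\cN_{\sqrt\ve}(\bbb_i^1)\subset\bbb_i^0$ and $\cN_{\sqrt\ve}(\cA_2)\subset\cA_1$, and the arithmetic giving $2\si(2\et+\et)=6\si\et$ all check out.

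Two small points worth tightening. Your claim $\phi=1-w_\ve(1/3)\in[0,1]$ is true but deserves a word: it does not follow merely from $g_\ve$ having range $[-1,1]$ (which only gives $\phi\in[0,2]$), but rather from $f(M)=[1/3,2/3]$, which forces $d^{(1/3)}\geq 0$ everywhere and hence $w_\ve(1/3)\geq g_\ve(0)=0$. Once this is in hand, since $w_\ve(t)=1-3t\phi\in[1-\phi,1]\subset[0,1]$ and $W$ is strictly decreasing on $[0,1]$, one gets $W(w_\ve(t))\leq W(w_\ve(1/3))$ outright; no constant $C$ and no appeal to nondegeneracy is needed. Separately, the singular-set contribution you describe should be attributed to $\cH^n(U\cap\{d^{(t)}=\ta\})$ (equivalently, to $\La_\ve$), not to $\cH^n(\cN_{\sqrt\ve}(U)\cap f^{-1}(t))$; the latter is already uniformly controlled by \eqref{eq: mass of modified morse fn level set}, and it is the passage from the level set of $f$ to the level sets of the signed distance near critical values of $f$ that requires the Morse-lemma input from \cite{Guaraco}.
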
\vspace{-4ex}
\subsection{Construction of sweepout in \(H^1(M)\setminus\{0\}\) and proof of Theorem \ref{thm main thm}}
The next proposition essentially follows from the argument in \cite{gt}*{Proof of Lemma 7.5}.
\begin{pro}\label{mod u}
	\(u \mapsto |u|\) is a continuous map from \(H^1(M)\) to itself. 
\end{pro}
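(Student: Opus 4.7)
The plan is to verify continuity in the two pieces of the $H^1$ norm $\|u\|_{H^1}^2 = \|u\|_{L^2}^2 + \|\nabla u\|_{L^2}^2$ separately. The $L^2$ piece is immediate: the pointwise reverse triangle inequality $\bigl||u_n(x)| - |u(x)|\bigr| \leq |u_n(x) - u(x)|$ integrates to $\bigl\||u_n| - |u|\bigr\|_{L^2} \leq \|u_n - u\|_{L^2}$, which tends to zero whenever $u_n \ra u$ in $H^1(M)$.

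For the gradient piece, I would invoke the Stampacchia truncation formula: for every $w \in H^1(M)$ one has $\nabla|w| = \mathrm{sgn}(w)\,\nabla w$ a.e., with the convention $\mathrm{sgn}(0) = 0$ (this relies on $\nabla w = 0$ a.e. on $\{w = 0\}$). Splitting
\[\nabla|u_n| - \nabla|u| \;=\; \mathrm{sgn}(u_n)\bigl(\nabla u_n - \nabla u\bigr) \;+\; \bigl(\mathrm{sgn}(u_n) - \mathrm{sgn}(u)\bigr)\nabla u,\]
the first summand has $L^2$ norm at most $\|\nabla u_n - \nabla u\|_{L^2}$, which vanishes since $u_n \ra u$ in $H^1$.

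The main obstacle is the second summand, because $\mathrm{sgn}$ is discontinuous at $0$, so pointwise convergence of $u_n$ to $u$ does not automatically give pointwise convergence of $\mathrm{sgn}(u_n)$ to $\mathrm{sgn}(u)$. I would handle this via a subsequence argument. Given any subsequence of $\{u_n\}$, extract a further subsequence (still denoted $\{u_n\}$) that converges to $u$ pointwise a.e. On $\{u = 0\}$ the integrand vanishes because $\nabla u = 0$ a.e.\ there; on $\{u \neq 0\}$ the a.e.\ convergence $u_n \ra u$ forces $\mathrm{sgn}(u_n) \ra \mathrm{sgn}(u)$ a.e. Since $\bigl|(\mathrm{sgn}(u_n) - \mathrm{sgn}(u))\nabla u\bigr|^2 \leq 4|\nabla u|^2 \in L^1(M)$, dominated convergence yields $\bigl\|(\mathrm{sgn}(u_n) - \mathrm{sgn}(u))\nabla u\bigr\|_{L^2} \ra 0$ along this further subsequence. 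As every subsequence of the original $\{u_n\}$ admits such a further subsequence on which the gradient difference tends to zero, the full sequence satisfies $\nabla|u_n| \ra \nabla|u|$ in $L^2$, completing the proof.
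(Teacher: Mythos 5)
Your proof is correct and takes essentially the same route as the paper: both use the formula $\nabla|w|=\mathrm{sgn}(w)\nabla w$ (with $\mathrm{sgn}(0)=0$), the reverse triangle inequality for the $L^2$ piece, the identical algebraic splitting $\mathrm{sgn}(u_n)(\nabla u_n-\nabla u)+(\mathrm{sgn}(u_n)-\mathrm{sgn}(u))\nabla u$, the subsequence trick to reduce to pointwise a.e.\ convergence, and dominated convergence on $\{u\neq 0\}$ (using that $\nabla u=0$ a.e.\ on $\{u=0\}$). The only cosmetic difference is that the paper bounds the integrand by $2|a|^2+2|b|^2$ rather than invoking the $L^2$ triangle inequality on the two summands separately.
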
\vspace{-4ex}
\begin{proof}
	We define \( \ka:\bbr \ra \bbr\) as follows
	\[\ka(t)=
	\begin{cases}
	1 &\text{if } t>0;\\
	0 &\text{if } t=0;\\
	-1&\text{if } t<0.
	\end{cases}\]
By \cite{gt}*{Lemma 7.6} \(v \in H^1(M)\) implies \(|v|\in H^1(M)\) with \(D|v|=\ka(v)Dv\).

We need to show that if \(u_n \ra u\) in \(H^1(M)\), \(|u_n| \ra |u|\) in \(H^1(M)\). We will use the following fact. Let \(\{x_n\}\) be a sequence in a topological space \(\cX\) and \(x \in \cX\) \st every subsequence \(\{x_{n_k}\}\) of \(\{x_n\}\) has a further subsequence \(\{x_{n_{k_i}}\}\) which converges to \(x\). Then \(\{x_n\}\) converges to \(x\). Therefore, without loss of generality we can assume that \(u_n\) converges to \(u\) pointwise a.e.

\(|u_n|\ra |u|\) in \(L^2(M)\) because of the inequality \(||u_n|-|u||\leq |u_n-u|\). Using the fact that \(Du=0\) a.e. on \(\{u=0\}\), we compute
\begin{align}
\int_M |\ka(u_n)Du_{n}-\ka(u)Du|^2 & \leq \int_M 2|\ka(u_n)|^2|Du_n-Du|^2+2|\ka(u_n)-\ka(u)|^2|Du|^2 \nonumber\\
& \leq 2\int_M|Du_n-Du|^2 +2\int_{\{u \neq 0\}} |\ka(u_n)-\ka(u)|^2|Du|^2. \nonumber
\end{align}
The first integral converges to \(0\). If \(u(y) \neq 0\) and \(u_n(y)\) converges to \(u(y)\), \(\ka(u_n(y))\) converges to \(\ka(u(y))\) as well. Hence, by the dominated convergence theorem, the second integral also converges to \(0\).
\end{proof}
For \(x \in \bbr\), let \(x^{+}=\max\{x,0\}\); \(x^{-}=\min\{x,0\}\). We define the maps \(\ph,\ps,\tht :H^1(M)\times H^1(M)\times H^1(M) \ra H^1(M)\) as follows.
\begin{align}
	&\ph(u_0,u_1,w)=\min\{\max\{u_0,-w\},\max\{u_1,w\}\}; \nonumber\\
	&\ps(u_0,u_1,w)=\max\{\min\{u_0,w\},\min\{u_1,-w\}\}; \label{def of phi, psi}\\
	&\tht(u_0,u_1,w)=\ph(u_0,u_1,w)^{+}+\ps(u_0,u_1,w)^{-}.\nonumber
\end{align}
\begin{pro}
	The map \(\tht\) defined above has the following properties.
\begin{itemize}\vspace{-3ex}\label{pro tht}
	\item[(i)] \(\tht\) is a continuous map.
	\item[(ii)] \(\tht(-u_0,-u_1,w)=-\tht(u_0,u_1,w)\).
	\item[(iii)] If \(|u_0|,|u_1| \leq 1\), then \(\tht(u_0,u_1,\mathbf{1})=u_0\), \(\tht(u_0,u_1,-\mathbf{1})=u_1\) where \(\mathbf{1}\) denotes the constant function \(1\).
	\item[(iv)] If \(u_0(p)=u_1(p)\), then \(\tht(u_0,u_1,w)(p)=u_0(p)=u_1(p)\).
	\item[(v)] For every \(U \subset M\), \(E_{\ve}(\tht(u_0,u_1,w),U)\leq E_{\ve}(u_0,U)+E_{\ve}(u_1,U)+E_{\ve}(w,U)\).
	\item[(vi)] \(\tht(u_0,u_1,w)(p)=0\) implies \(p\in \{u_0=0\}\cup \{u_1=0\}\cup\{w=0\}\).
\end{itemize}\vspace{-2ex}
\vspace{-2ex}
\end{pro}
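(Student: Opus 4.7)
Parts (i)--(iv) are formal. For (i), the identities $u^+ = \tfrac{u+|u|}{2}$ and $\max\{f,g\} = g + (f-g)^+$ reduce continuity of $\tht$ to continuity of $u \mapsto |u|$ on $H^1(M)$, which is Proposition \ref{mod u}. For (ii), the identity $\min\{-a,-b\} = -\max\{a,b\}$ yields $\ph(-u_0,-u_1,w) = -\ps(u_0,u_1,w)$ and $\ps(-u_0,-u_1,w) = -\ph(u_0,u_1,w)$; combined with $(-f)^+ = -(f^-)$ and $(-f)^- = -(f^+)$, this gives $\tht(-u_0,-u_1,w) = -\tht(u_0,u_1,w)$. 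For (iii), substituting $w = \mathbf{1}$ and using $|u_0|,|u_1| \leq 1$ collapses the nested $\min/\max$ to $\ph = \ps = u_0$, whence $\tht = u_0^+ + u_0^- = u_0$; the case $w = -\mathbf{1}$ is symmetric. For (iv), fix $p$ with $u_0(p) = u_1(p) = a$ and set $b = w(p)$. A three-case split on $a$ versus $|b|$ yields $\tht(p) = a$ in each case: if $a \geq |b|$ then $\ph(p) = a$ and $\ps(p) = |b|$ are both $\geq 0$, so $\tht = \ph = a$; if $a \leq -|b|$ then $\ph(p) = -|b|$ and $\ps(p) = a$ are both $\leq 0$, so $\tht = \ps = a$; and if $|a| \leq |b|$, then $\ph = \ps = a$ and $\tht = a$.

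The content of (v) and (vi) rests on the pointwise structural lemma that $\ph \geq \ps$ everywhere and that $\{\ph > 0\} \cap \{\ps < 0\} = \emptyset$. The first follows from the chain $\max\{u_0,-w\} \geq u_0 \geq \min\{u_0,w\}$, $\max\{u_0,-w\} \geq -w \geq \min\{u_1,-w\}$, and the two analogous estimates for $\max\{u_1,w\}$; both arguments of the outer $\min$ defining $\ph$ dominate both arguments of the outer $\max$ defining $\ps$. The second is a sign analysis: $\ph(p) > 0$ forces $(u_0 > 0$ or $w < 0)$ and $(u_1 > 0$ or $w > 0)$, while $\ps(p) < 0$ forces $(u_0 < 0$ or $w < 0)$ and $(u_1 < 0$ or $w > 0)$; checking each combination of these ``or''-branches produces an immediate contradiction. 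Consequently $\tht = \ph$ on $\{\ph > 0\}$ and $\tht = \ps$ on $\{\ps < 0\}$, with $\tht = 0$ on the residual locus where $\ph = \ps = 0$. Since each of $\ph,\ps$ is a $\min/\max$ combination of $\{u_0, u_1, w, -w\}$, it follows that $\tht(p) \in \{u_0(p), u_1(p), w(p), -w(p), 0\}$ at every $p$.

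Claim (vi) is then immediate: $\tht(p) = 0$ forces $\ph(p) = \ps(p) = 0$, so expanding $\min\{\max\{u_0,-w\}, \max\{u_1,w\}\} = 0$ with both entries $\geq 0$ shows one of the two maxes vanishes, and that max vanishes only if one of its arguments does, i.e.\ one of $u_0(p), u_1(p), w(p)$ equals $0$. For (v), partition $U = A_0 \,\dot\cup\, A_1 \,\dot\cup\, A_w$ by assigning each point, with a fixed priority, to the set on which $\tht$ equals $u_0$, $u_1$, or one of $\pm w$; by (vi) together with the pointwise characterization of $\tht$, every point of $U$ lies in one of these sets, since $\tht(p) = 0$ already forces $\tht$ to coincide with whichever of $u_0, u_1, w$ vanishes at $p$. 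Standard Sobolev lattice calculus gives $\nabla \tht = \nabla u_j$ a.e.\ on $A_j$ and $\nabla \tht = \pm \nabla w$ a.e.\ on $A_w$; using $W(-t) = W(t)$ and $|{-\nabla w}|^2 = |\nabla w|^2$,
\[
E_{\ve}(\tht, A_0) = E_{\ve}(u_0, A_0),\qquad E_{\ve}(\tht, A_1) = E_{\ve}(u_1, A_1),\qquad E_{\ve}(\tht, A_w) = E_{\ve}(w, A_w),
\]
and summing over the disjoint partition, together with $A_j, A_w \subset U$, yields the bound (v).

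The one genuine obstacle is the nonexistence of $\{\ph > 0\} \cap \{\ps < 0\}$, a sign-analytic statement requiring an exhaustive case check on the signs of $u_0, u_1, w$; once this pointwise structural fact and $\ph \geq \ps$ are in hand, (v) and (vi) follow by standard Sobolev chain-rule and lattice calculus combined with the evenness of $W$, while (i)--(iv) are routine algebraic identities and pointwise substitutions.
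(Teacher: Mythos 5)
Your argument is correct and follows essentially the same route as the paper: reduce (i) to Proposition~\ref{mod u} via lattice identities, handle (ii)--(iv) by pointwise substitution, and derive (v)--(vi) from a structural decomposition of $M$ along the signs of $\ph$ and $\ps$. The one place you diverge slightly is in how you establish that decomposition. The paper writes out explicit set-theoretic characterizations of $\{\ph\lessgtr 0\}$ and $\{\ps\lessgtr 0\}$ in terms of $\{u_i\lessgtr 0\}$ and $\{w\lessgtr 0\}$ and reads off $M=(\{\ph\geq 0\}\cap\{\ps\geq 0\})\cup(\{\ph\leq 0\}\cap\{\ps\leq 0\})$, which directly yields $\tht(y)\in\{u_0(y),u_1(y),w(y),-w(y)\}$ and hence (v), (vi) at once. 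You instead prove $\ph\geq\ps$ (which the paper never states, and which is a nice observation that disposes of $\{\ph<0\}\cap\{\ps>0\}$ for free) and then rule out $\{\ph>0\}\cap\{\ps<0\}$ by a sign analysis equivalent to the paper's; this is logically the same content. Your weaker intermediate statement $\tht(p)\in\{u_0(p),u_1(p),w(p),-w(p),0\}$ then forces you to make a short detour through (vi) before (v), whereas the paper's sharper pointwise fact makes both immediate and avoids the partition-with-priority device you use for the energy estimate. These are cosmetic differences; the substance is the same.
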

\begin{proof}
	We have the following identities.
	\[\max\{a,b\}=\frac{a+b+|a-b|}{2}; \quad \min\{a,b\}=\frac{a+b-|a-b|}{2}.\]
	Hence, the continuity of \(\ph,\;\ps\) and \(\tht\) follows from Proposition \ref{mod u}. We note that \(\ph(-u_0,-u_1,w)=-\ps(u_0,u_1,w)\) which implies (ii). (iii) also follows from direct computation. Indeed, if \(|u_0|,|u_1| \leq 1\),  \(\ph(u_0,u_1,\mathbf{1})=u_0=\ps(u_0,u_1,\mathbf{1})\) and \(\ph(u_0,u_1,-\mathbf{1})=u_1=\ps(u_0,u_1,-\mathbf{1})\). To prove (iv), we need the following identity. If \(a\geq 0\),
\begin{equation}
	\min\{\max\{a,-b\},\max\{a,b\}\}=a. \label{id}
\end{equation} 
Indeed, the identity holds if \(\max\{a,b\}=a=\max\{a,-b\}\). It also holds if \(\max\{a,-b\}=a\) and \(\max\{a,b\}=b>a\). It is not possible for both \(\max\{a,-b\}\) and \(\max\{a,b\}\) to be different from \(a\) as this will imply \(b>a,\; -b>a\; \implies 0>a\). Moreover, if \(a\geq 0\), either \(\min\{a,b\}\geq 0\) (if \(b \geq 0\)) or \(\min\{a,-b\}\geq 0\) (if \(b \leq 0\)); hence
\begin{equation}
	\max\{\min\{a,b\},\min\{a,-b\}\}\geq 0. \label{id'}
\end{equation}
\eqref{id} and \eqref{id'} give (iv) if \(u_0(p)=u_1(p)\geq 0\). The other case is similar.

For \(i=0,1\), let \(U_i=\{u_i<0\}\) and \(V_i=\{u_i>0\}\). We also set \(G=\{w<0\}\), \(H=\{w>0\}\), \(Z=\{w=0\}\). Then we have (suppressing the dependence of \(\ph,\;\ps,\;\tht\) on \(u_0,\;u_1,\;w\)),
\begin{align}
	&\ph(y) <0 \Leftrightarrow y \in (U_0\cap H)\dot\cup(U_1\cap G);\; \ph(y)>0 \Leftrightarrow y \in (V_0\cap H)\dot\cup (V_1\cap G)\dot\cup(V_0\cap V_1 \cap Z);\nonumber\\
	&\ps(y)<0 \Leftrightarrow y \in (U_0\cap H)\dot\cup (U_1\cap G)\dot\cup(U_0\cap U_1 \cap Z); \; \ps(y)>0 \Leftrightarrow y \in (V_0\cap H)\dot\cup (V_1\cap G). \label{U_i}
\end{align}
Further,
\begin{equation}
	\tht(y)=
	\begin{cases}
	\ph(y) &\text{if } y \in \{\ph\geq 0\}\cap \{\ps\geq 0\};\\
	\ps(y) &\text{if } y \in \{\ph\leq 0\}\cap \{\ps\leq 0\}.
	\end{cases}\label{theta}
\end{equation}
It follows from \eqref{U_i} that
\[M= \big(\{\ph\geq 0\}\cap \{\ps\geq 0\}\big) \cup \big(\{\ph\leq 0\}\cap \{\ps\leq 0\}\big).\]
Hence, using the definition of \(\ph\) and \(\ps\), \eqref{theta} implies that
\begin{equation}
\tht(y) \in \{u_0(y),u_1(y),w(y),-w(y)\};\quad \nabla\tht(y) \in \{\nabla u_0(y),\nabla u_1(y), \nabla w(y), -\nabla w(y)\}. \label{pt w}
\end{equation}
\tf we have the following point-wise estimates.
\[|\nabla \tht (y)|^2 \leq |\nabla u_0 (y)|^2+|\nabla u_1 (y)|^2+|\nabla w (y)|^2;\; W(\tht(y))\leq W(u_0(y))+W(u_1(y))+W(w(y))\] 
which give item (v) of the proposition. \eqref{pt w} also implies item (vi).
\end{proof}\newcommand{\xtkin}{\tilde{X}[KI]}\newcommand{\vr}{\varrho}

Let \(\al \in \xtkin_j\) so that \(\mu=\pi(\al)\in X[KI]_j\). We define the following maps.
\begin{equation*}
	\left(\pi|_{\al}\right)^{-1}\circ \De_{\mu}=\De^{\bullet}_{\al}:\sci^j\ra \al;\quad D_{\mu}\circ\left(\pi|_{\al}\right)=D^{\bullet}_{\al}:\al \ra \sci^j.
\end{equation*}\newcommand{\zee}{\ze_{\ve}}\newcommand{\zeh}{\hat{\ze}_{\ve}}
\begin{pro}\label{pro zeta_ep}
	Let \(\ve^*=\min\{\ve_1,\ve_2,(\de/4)^2, 4^{-1}(r_1-r)^2\}\). There exists a continuous, \(\bbz_2\)-equivariant map \(\ze_{\ve}:\tx \ra H^1(M)\setminus \{0\}\) with the property
	\begin{itemize}\vspace{-3ex}
		\item[\((\cP)_0\)] For all \(v \in \xtki\), \(\zee(v)=\vartheta_{\ve}^v\);
	\end{itemize}\vspace{-2ex}
where \(\vartheta_{\ve}^v\) is as in Proposition \ref{pro est energy of g_ep circ d_v}. Moreover, if \(\al \in \xtkin_j\), \(\zee|_{\al}\) has the following properties.
    \begin{itemize}\vspace{-3ex}
	\item[\((\cP0)_{j,\al}\)] Let \(\zeh^{\al}=\zee\circ\De^{\bullet}_{\al}\). Then, for all \(z=(z',z_j)\in \sci^j\) (with \(z'\in \sci^{j-1}\)),
	\[\zeh^{\al}(z',z_j)=\tht\left(\zeh^{\al}(z',0),\zeh^{\al}(z',1),w_{\ve}(z_j)\right);\]
	where \(w_{\ve}\) is as defined in \eqref{w_ep}.
	\item[\((\cP1)_{j,\al}\)] We define \(\cI_{\al}=\{i(e):e \in \al_1\}\) where \(i(e)\) is as in Proposition \ref{pro big}, \((P5)\). For all \(\ve\leq \ve^*\), \(t \in \al\) and \(v \in \al_0\),
	\[y \notin \cA_2\bigcup\Big(\bigcup_{i \in \cI_{\al}}\bbb^1_{i}\Big)  \implies \zee(t)(y)=\zee(v)(y).\]
	\item[\((\cP2)_{j,\al}\)] For all \(\ve\leq \ve^*\), \(i\in [I]\) and \(t \in \al\),
	\[\frac{1}{2\si }E_{\ve}\left(\zee(t),\bbb^1_i\right)\leq 3^j(2^{4k+2}+2)\et;\quad \frac{1}{2\si }E_{\ve}\left(\zee(t),\cA_2\right)\leq 3^j(2^{4k+2}+2)\et.\]
As a consequence, for all \(\ve\leq \ve^*\) and \(t \in \al\),
\begin{equation}
	\frac{1}{2\si }E_{\ve}\left(\zee(t),M\right)\leq L+(2^{4k+2}+4)\et +(j+1)3^j(2^{4k+2}+2)\et. \label{eqn of pro 3.13}
\end{equation}
    \end{itemize}\vspace{-2ex}
\end{pro}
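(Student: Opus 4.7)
The plan is an induction on $j=\dim\al$, extending $\zee$ one skeleton of $\tx[KI]$ at a time. The base case $j=0$ is forced by $(\cP)_0$; $\bbz_2$-equivariance at vertices follows from $d_{T(v)}=-d_v$ together with the fact that $g_{\ve}$ is odd, while $(\cP2)_{0,\al}$ is immediate from Proposition \ref{pro est energy of g_ep circ d_v}. At the inductive step, for each $j$-cell $\al$ I define $\zeh^{\al}:\sci^j\ra H^1(M)$ by the recursive formula $(\cP0)_{j,\al}$ and set $\zee|_{\al}=\zeh^{\al}\circ D^{\bullet}_{\al}$. Compatibility with the existing definition on $\del\sci^j$ splits into two cases. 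On the faces $\{z_j=0\}$ and $\{z_j=1\}$, the pointwise identity $\tht(y)\in\{u_0(y),u_1(y),\pm w(y)\}$ from the proof of Proposition \ref{pro tht} propagates $|\zeh^{\al}|\leq 1$ inductively, so the identities $w_{\ve}(0)=\mathbf{1}$ and $w_{\ve}(1)=-\mathbf{1}$ together with Proposition \ref{pro tht}(iii) collapse the formula to $\zeh^{\al}(z',0)$ and $\zeh^{\al}(z',1)$ respectively. On the lateral faces $\del\sci^{j-1}\times\sci$, the compatibility relation \eqref{eq compatibility} of the canonical maps matches the $\tht$-recursion for $\al$ with the recursion on the relevant $(j-1)$-subcell $\be\prec\al$. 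Continuity and $\bbz_2$-equivariance extend from $j-1$ to $j$ via Proposition \ref{pro tht}(i) and (ii). That $\zee(t)$ is not the zero element of $H^1(M)$ follows from Proposition \ref{pro tht}(vi): its pointwise zero set sits in a finite union of hypersurfaces $\Ps(v)$ and level sets of the $w_{\ve}(z_j)$'s, all of Lebesgue measure zero.

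The localization $(\cP1)_{j,\al}$ is the heart of the argument. I first establish it at the vertex level: for an edge $e\in\al_1$ joining $v,v'\in\al_0$, property $(P5)$ of Proposition \ref{pro big} gives $\tph(v)=\tph(v')$ outside $\cA\cup\bbb_{i(e)}$. The constraint $\ve^*\leq\min\{(\de/4)^2,(r_1-r)^2/4\}$ is chosen precisely so that every $y\notin\cA_2\cup\bbb^1_{i(e)}$ satisfies $d(y,\cA\cup\bbb_{i(e)})\geq 2\se$; hence $\ov{B}(y,2\se)$ lies entirely in the region of agreement of the Caccioppoli sets. Since $g_{\ve}$ is constant at $\pm 1$ outside $[-2\se,2\se]$, the value $\vartheta_{\ve}^v(y)=g_{\ve}(d_v(y))$ depends only on $\Ps(v)\cap\ov{B}(y,2\se)$ and on the sign of $d_v(y)$, which tracks whether $y\in\tph(v)$ or $\tph(T(v))$; both data agree between $v$ and $v'$, so $\vartheta_{\ve}^v(y)=\vartheta_{\ve}^{v'}(y)$. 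Iterating along edges propagates the equality to all vertices of $\al$, and for arbitrary $t\in\al$ the recursion $(\cP0)_{j,\al}$ combined with Proposition \ref{pro tht}(iv) extends it.

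With $(\cP1)_{j,\al}$ in hand, the energy bound $(\cP2)_{j,\al}$ is bookkeeping via Proposition \ref{pro tht}(v). For $U\in\{\bbb^1_i,\cA_2\}$ the inequality
\[E_{\ve}(\zee(t),U)\leq E_{\ve}(\zeh^{\al}(z',0),U)+E_{\ve}(\zeh^{\al}(z',1),U)+E_{\ve}(w_{\ve}(z_j),U),\]
the inductive bound $2\si\cdot 3^{j-1}(2^{4k+2}+2)\et$ on the first two summands, and the bound $E_{\ve}(w_{\ve}(z_j),U)\leq 6\si\et$ from Proposition \ref{pro est energy of w_t} yield the claimed $3^j$-estimate, because $2^{4k+2}+2\geq 6$ absorbs the $6\si\et$ into the jump from $2\cdot 3^{j-1}$ to $3^j$. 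For the total-energy bound \eqref{eqn of pro 3.13} I partition $M=(M\setminus(\cA_2\cup\bigcup_{i\in\cI_{\al}}\bbb^1_i))\cup\cA_2\cup\bigcup_{i\in\cI_{\al}}\bbb^1_i$: on the complement piece $(\cP1)_{j,\al}$ replaces $\zee(t)$ by some vertex value $\vartheta_{\ve}^v$, whose global energy is bounded by $2\si(L+(2^{4k+2}+4)\et)$ via Proposition \ref{pro est energy of g_ep circ d_v}; the remaining at most $j+1$ pieces contribute the $(j+1)3^j$ term, using that parallel edges of $\al$ share the same $i(e)$, whence $|\cI_{\al}|\leq j$. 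The main obstacle is the localization step, which must reconcile the global dependence of the signed distance function $d_v$ on $\Ps(v)$ with the merely local agreement of the Caccioppoli sets provided by $(P5)$.
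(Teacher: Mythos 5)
Your argument is correct and mirrors the paper's proof essentially step for step: induction on the skeleta of $\tx[KI]$ using the $\tht$-recursion, boundary compatibility via Proposition \ref{pro tht}(iii) and \eqref{eq compatibility}, the vertex-level localization from $(P5)$ with $\ve^*$ chosen so that $2\sqrt{\ve}\leq\min\{\de/2,r_1-r\}$, extension to all $t\in\al$ via $(\cP0)$ and Proposition \ref{pro tht}(iv), and the $(\cP2)$ bookkeeping with $|\cI_\al|\leq j$. The only cosmetic differences are that you make explicit the preservation of $|\zeh^\al|\leq 1$ (which the paper uses implicitly when invoking Proposition \ref{pro tht}(iii)) and phrase the $(\cP1)$ argument in terms of $\ov{B}(y,2\sqrt\ve)$ lying in the agreement region rather than the paper's two-case analysis on the nearest point, but the content is the same.
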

\begin{proof}
	The map \(\zee\) will be defined inductively on the cells of \(\xtkin\). The equivariance of \(\zee\) and the fact that \(\zee(x)\) is not identically \(0\) for all \(x \in \tx\) will be proved at the end (after proving the other properties of \(\zee\)). If \(v \in \xtki\), we define \(\zee(v)=\vartheta_{\ve}^v\) which is precisely the property \((\cP)_0\). The properties \((\cP0)_{0,v}\) and \((\cP1)_{0,v}\) are vacuous; \((\cP2)_{0,v}\) is satisfied because of Proposition \ref{pro est energy of g_ep circ d_v}. Let us assume that for some \(p\geq 1\), \(\zee\) has been defined on
	\[\bigcup_{j\leq p-1}\xtkin_j\]
	and if \(\vr \in \xtkin_{p-1}\), \(\zee\vert_{\vr}\) has the properties \((\cP0)_{(p-1),\vr}-(\cP2)_{(p-1),\vr}.\)
	
	Let \(\la \in \xtkin_p.\) By our assumption, \(\zee\) is already defined on \(\del \la.\) For \(z \in \del \sci^p\), we define
	\[\zeh^{\la}(z)=\left(\zee \circ \De^{\bullet}_{\la}\right)(z).\]
	For arbitrary \(z=(z',z_p) \in \sci^p\) with \(z'\in \sci^{p-1}\), we define
\begin{equation}
	\zeh^{\la}(z',z_p)=\tht\left(\zeh^{\la}(z',0),\zeh^{\la}(z',1), w_{\ve}(z_p)\right) \label{defn zeh}
\end{equation}
 where \(w_{\ve}\) is as defined in \eqref{w_ep}. By Proposition \ref{pro tht} (iii), this equation indeed holds for \(z_p=0,1\). Further, \eqref{defn zeh} is also valid if \(z \in \del \sci^p\) because of our induction hypothesis that \(\zee\) restricted \(\del \la\) satisfies \((\cP0)\) and the compatibility relation \eqref{eq compatibility}. For \(t \in \la\), \(\zee(t)\)	is defined by \(\zee(t)=\left(\zeh^{\la} \circ D^{\bullet}_{\la}\right)(t)\). \(\zee\) is continuous on \(\la\) as \(\zee|_{\del \la}\) is \cts and the maps \(\tht,\;w_{\ve}\) are continuous.
 
It remains to verify that \(\zee|_{\la}\) satisfies the properties \((\cP0)_{p,\la}-(\cP2)_{p,\la}\). \((\cP0)_{p,\la}\) holds by the definition \eqref{defn zeh}. To prove \((\cP1)_{p,\la}\), we first show that if \(v,v' \in \xtki\) and \(e \in \xtkin_1\) \st \(v,v' \prec e\),
\begin{equation}
	y \notin \cA_2 \cup \bbb_{i(e)}^1 \implies \zee(v)(y)=\zee(v')(y). \label{cP2 1st}
\end{equation}
We set \(\cB^1=\bbb_{i(e)}^1\) and \(\cB=\bbb_{i(e)}\). Either \(y \in \pst(v)\) or \(y \in \pst(T(v))\). We assume that \(y \in \pst(T(v))\) (the other case is analogous); hence, (using Proposition \ref{pro big} \((P5)\))
\[y \in \pst(T(v))\cap \left(\cA_2 \cup \cB^1\right)^c = \pst(T(v'))\cap \left(\cA_2 \cup \cB^1\right)^c \]
Thus, \(d_v(y)=d(y,\Ps(v))\), \(d_{v'}(y)=d(y,\Ps(v'))\). If both \(d_v(y)\) and \(d_{v'}(y)\) are \(\geq 2\sqrt{\ve}\), \(\zee(v)(y)=1=\zee(v')(y)\). 

Otherwise, let us say \(d_v(y)=d(y,\Ps(v))<2\sqrt{\ve}\). We will show that \(d_v(y)=d_{v'}(y).\) Let \(y'\in \Ps(v)\) \st \(d(y,y')= d(y,\Ps(v))\). \tf
\begin{equation}
	d(y,y')= d(y,\Ps(v)) <2\sqrt{\ve}\leq \min\{\de/2, r_1-r\}. \label{d(y,y')}
\end{equation}
Hence, \(y \notin \cA_2 \cup \cB^1\) implies \(y' \notin \cA \cup \cB\). Thus, (again using Proposition \ref{pro big} \((P5)\))
\[y'\in \Ps(v)\cap \left(\cA \cup \cB\right)^c =  \Ps(v')\cap \left(\cA \cup \cB\right)^c.\]
This gives 
\[d(y, \Ps(v))=d\left(y,\Ps(v)\cap \left(\cA \cup \cB\right)^c\right);\]
and \(d(y, \Ps(v'))\leq d(y,y')< 2\sqrt{\ve}\) which also implies (by the above reasoning)
\[d(y, \Ps(v'))=d\left(y,\Ps(v')\cap \left(\cA \cup \cB\right)^c\right).\]
Since \(\Ps(v)\cap \left(\cA \cup \cB\right)^c = \Ps(v')\cap \left(\cA \cup \cB\right)^c\), we get \(d_v(y)=d_{v'}(y).\)

Thus we have proved \eqref{cP2 1st} which together with \eqref{defn zeh}, induction hypothesis \((\cP1)_{(p-1),\del \la}\) and Proposition \ref{pro tht} (iv) gives \((\cP1)_{p,\la}\). 

Finally we prove \((\cP2)_{p,\la}\). Using \eqref{defn zeh} together with Proposition \ref{pro tht} (v), the induction hypothesis \((\cP2)_{(p-1),\del \la}\) and Proposition \ref{pro est energy of w_t} we obtain the following estimate (\(\cU\) stands for \(\bbb_i^1\) for some \(i \in [I]\) or \(\cA_2\).)
\begin{align*}
\frac{1}{2\si}E_{\ve}\left(\zeh^{\la}(z',z_p), \cU\right)& \leq \frac{1}{2\si}E_{\ve}\left(\zeh^{\la}(z',0),\cU\right)+ \frac{1}{2\si}E_{\ve}\left(\zeh^{\la}(z',1), \cU\right) + \frac{1}{2\si}E_{\ve}\left(w_{\ve}(z_p), \cU\right)\\
&\leq 2.3^{p-1}(2^{4k+2}+2)\et + 3\et\\
&\leq 3^{p}(2^{4k+2}+2)\et.
\end{align*}
This estimate along with \((\cP1)_{p,\la}\), \((\cP)_0\) and Proposition \ref{pro est energy of g_ep circ d_v} gives for all \(\ve\leq \ve^*\), \(t \in \al\) and \(v \in \al_0\),
\begin{align*}
\frac{1}{2\si}E_{\ve}\left(\ze_{\ve}(t),M\right) & \leq \frac{1}{2\si}E_{\ve}\left(\ze_{\ve}(v),M\right)+\sum_{i \in \cI_{\la}}\frac{1}{2\si}E_{\ve}\left(\ze_{\ve}(t),\bbb_i^1\right) +\frac{1}{2\si}E_{\ve}\left(\ze_{\ve}(t),\cA_2\right)\\
&\leq L+(2^{4k+2}+4)\et +(p+1)3^p(2^{4k+2}+2)\et.
\end{align*}
In the last line we have used the fact that \(\md{\cI_{\la}}\leq p\) which follows from the characterization of \(i(e)\) in Proposition \ref{pro big} \((P5).\)

Now we show that the map \(\zee\) constructed above is \(\bbz_2\)-equivariant. From the definition of \(d_v\) (\eqref{def d_v}), it follows that \(d_{T(v)}=-d_v\) for all \(v \in \xtki.\) Hence, by \((\cP)_0\), \(\zee(T(v))=-\zee(v)\) for all \(v \in \xtki.\) Now we can use induction along with \((\cP0)\) and Proposition \ref{pro tht} (ii) to conclude that \(\zee(T(x))=-\zee(x)\) for all \(x \in \xt.\)

Lastly, we prove that for all \(x \in \tx\), \(\cH^{n+1}(\{\zee(x)=0\})=0\). This will in particular imply that for all \(x \in \tx\), \(\zee(x)\) is not identically equal to \(0\). If \(v \in \xtki\), by \eqref{def d_v} and \((\cP)_0\), \(\{\zee(v)=0\}=\Ps(v)\) and \(\cH^{n+1}(\Ps(v))=0\) by Proposition \ref{pro big} $(P4)$. Hence, \(\cH^{n+1}(\{\zee(v)=0\})=0\) for all \(v \in \xtki\). Now, as before we can use induction along with \((\cP0)\) and Proposition \ref{pro tht} (vi) to conclude that \(\cH^{n+1}(\{\zee(x)=0\})=0\) for all \(x \in \xt.\)
\end{proof}
\begin{proof}[Proof of Theorem \ref{thm main thm}]
	By Proposition \ref{pro zeta_ep}, for every \(\et>0\) there exists \(\ve^*>0\) \st for all \(\ve\leq \ve^*\) there exists \(\ze_{\ve}\in \tilde{\Pi}\) \st
	\[\frac{1}{2\si}\sup_{x \in \tx}E_{\ve}(\ze_{\ve}(x)) \leq L+\al(k)\et \;(\text{by } \eqref{eqn of pro 3.13})\]
	where \(\al(k)\) is a constant which depends only on \(k=\dim (\tx)\). Hence,
\[\frac{1}{2\si}\bL_{\ve}(\tilde{\Pi})\leq L+\al(k)\et \;\; \forall\ve \leq \ve^* \implies\frac{1}{2\si}\limsup_{\ve \ra 0^{+}} \bL_{\ve}(\tilde{\Pi})\leq L+\al(k)\et.\]
Since \(\et>0\) is arbitrary, this implies \eqref{width ineq}.

The following facts follow from \cite{GG1}*{Section 6}. \(\Ind_{\bbz_2}(\xt)\geq p+1\) if and only if each map \(\Ph \in \Pi \) is a \(p\)-sweepout. Moreover,
\[c_{\ve}(p)=\inf_{\tilde{\Pi}} \bL_{\ve}(\tilde{\Pi})\]
where the infimum is taken over all \(\tilde{\Pi}\) \st \(\Ind_{\bbz_2}(\xt)\geq p+1\). Similarly,
\[\om_p = \inf_{\Pi}\bL_{AP}(\Pi)\]
where the infimum is taken over all \(\Pi\) \st each map in the homotopy class \(\Pi\) is a \(p\)-sweepout. 

We fix \(p\in \bbn\). For each \(j \in \bbn\), there exist double cover \(\tx_j \ra X_j\) and the corresponding homotopy classes \(\Pi_j\), \(\tilde{\Pi}_j\) (as discussed in Section 1) \st \(\Ind_{\bbz_2}(\xt_j)\geq p+1\) and
\[\bL_{AP}(\Pi_j)<\om_p+\frac{1}{j}.\]
By \eqref{width ineq},
\[\frac{1}{2\si}\limsup_{\ve \ra 0^{+}}\bL_{\ve}(\tilde{\Pi}_j)\leq \bL_{AP}(\Pi_j)<\om_p+\frac{1}{j}.\]
Hence, there exists \(\tilde{\ve}>0\) \st for all \(\ve\leq \tilde{\ve}\)
\[\frac{1}{2\si}c_{\ve}(p)\leq \frac{1}{2\si}\bL_{\ve}(\tilde{\Pi}_j)<\om_p+\frac{1}{j}\]
which implies 
\[\frac{1}{2\si}\limsup_{\ve \ra 0^{+}}c_{\ve}(p)\leq \om_p+\frac{1}{j}.\]
Since this holds for all \(j \in \bbn\), we obtain \eqref{spec ineq}.
\end{proof}
\section{Proof of Theorem \ref{thm critical set}}
In this section we will discuss how Theorem \ref{thm critical set} can be proved using the results contained in the papers \cite{HT,Guaraco,GG1}. We recall the function \(F\) defined in \eqref{2 defn F}. For \(\be\in (0,1)\), we set \(\si_{\be}=F^{-1}(1-\be).\)

\begin{pro}\label{pro 4.1}
	Let \(\{u_{i}:M\ra (-1,1)\}_{i=1}^{\infty}\) be a sequence smooth functions \st the followings hold.
	\begin{itemize}\vspace{-3ex}
		\item[(i)] \(AC_{\ve_i}(u_i)=0\) with \(\ve_i\ra 0\) as \(i \ra \infty\).
		\item[(ii)] There exists \(E_0\) \st \(E_{\ve_i}(u_i)\leq E_0\) for all \(i \in \bbn\).
		\item[(iii)] \(V\) is a stationary, integral varifold \st \(V_i=V[u_i]\ra V\) and \(\spt(V)\) has optimal regularity.
	\end{itemize}\vspace{-3ex}
Then, for all \(s>0\) there exists \(b_0\in (0,1/2]\) \st the following holds. Denoting \(w_i=F \circ u_i\), for all \(b \in (0,b_0]\) there exists \(i_0 \in \bbn\) depending on \(s\) and \(b\) \st for all \(i \geq i_0\) there exists \(t_i \in [-\si_b/2,\si_b/2]\) for which \(\{w_i>t_i\},\{w_i<t_i\}\in \cm\), \(\db{\{w_i>t_i\}}+\db{\{w_i<t_i\}}=\db{M}\), \(\del\db{\{w_i>t_i\}}=\db{\{w_i=t_i\}} \) and \(\bF(V,\md{\{w_i=t_i\}})\leq s.\)
\end{pro}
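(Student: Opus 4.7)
The plan is to exploit the representation $V[u_i]=\frac{1}{\si}\int_{-\si/2}^{\si/2}|\{w_i=\tau\}|\,d\tau$ together with the hypothesis $V[u_i]\to V$ to locate, inside $I_b:=[-\si_b/2,\si_b/2]$, a single level $t_i$ for which $|\{w_i=t_i\}|$ is $\bF$-close to $V$ and which is simultaneously a regular value of $w_i$; the Caccioppoli/boundary conclusions then follow automatically. By the Hutchinson–Tonegawa mass identity, the hypothesis (iii) also upgrades the varifold convergence to $\bF$-convergence.

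\textbf{Step 1 (Truncation).} Given $s>0$, HT-type estimates (equipartition of energy, and the fact that the limit varifold $V$ of finite mass inherits no mass concentration at $\{|u_i|=1\}$) furnish $b_0\in(0,1/2]$ such that
$$\limsup_{i\to\infty}\frac{1}{\si}\int_{\si_b/2<|\tau|<\si/2}\mathcal{H}^n(\{w_i=\tau\})\,d\tau\leq s/10 \quad\text{for all }\ b\leq b_0.$$
Together with $V[u_i]\to V$ in $\bF$, this shows that the truncated averaged varifold $V_i^b:=\frac{1}{\si}\int_{I_b}|\{w_i=\tau\}|\,d\tau$ satisfies $\bF(V_i^b,V)\to 0$ as $i\to\infty$ for each fixed $b\leq b_0$.

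\textbf{Step 2 (Extracting one good level).} The fact that $V_i^b$ is close to $V$ does not by itself single out a level $\tau$. To pass from the averaged to the pointwise-in-$\tau$ statement, one combines it with the local HT structure: near a regular multiplicity-$k$ point of $\spt(V)$, $u_i$ resembles a stack of $k$ one-dimensional profiles $h(\cdot/\ve_i)$, so every level $\{w_i=\tau\}$ with $\tau$ in a compact subinterval of $(-\si/2,\si/2)$ is locally a union of $k$ nearby transverse sheets converging to $V$. Combining this with a Fubini/Chebyshev argument applied to a countable dense family $\{\vp_k\}$ in the Lipschitz unit ball of $C_c^{0,1}(G_nM)$, and diagonalizing, one produces a subset of $I_b$ of positive measure on which $\bF(|\{w_i=\tau\}|,V)\leq s$ for every $i\geq i_0(s,b)$. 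Intersecting with the full-measure set of regular values of $w_i$ (Sard's theorem), one selects $t_i\in I_b$ with the required $\bF$-estimate and such that $\{w_i=t_i\}$ is a smooth closed hypersurface; then $\{w_i>t_i\}$ and $\{w_i<t_i\}$ are open sets with smooth boundary, hence Caccioppoli, and the identities $\db{\{w_i>t_i\}}+\db{\{w_i<t_i\}}=\db{M}$ and $\partial\db{\{w_i>t_i\}}=\db{\{w_i=t_i\}}$ are the standard boundary relations for a domain with smooth boundary in mod-$2$ currents.

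\textbf{Main obstacle.} Step 2 is the delicate point: the naive principle "average close to $V$ implies at least one summand close to $V$" fails for varifolds. The resolution requires exploiting either the sheet-by-sheet HT description of $u_i$ near regular points of $\spt(V)$ (which shows the level-varifolds $|\{w_i=\tau\}|$ are essentially $\tau$-independent for $\tau$ bounded away from $\pm\si/2$) or a careful Fubini/diagonal argument against a dense family of test functions — routes which are technical but are available from the tools of \cite{HT,Guaraco,GG1} referenced in the excerpt.
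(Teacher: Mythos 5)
Your outline correctly identifies the paper's overall strategy: truncate the level interval to $[-\si_b/2,\si_b/2]$ using HT's energy-tail estimate, compare each individual level-set varifold $|\{w_i=t\}|$ to $V$ by the local HT structure near the regular part of $\spt(V)$, select a good level by a Chebyshev argument in $t$, and then pick such a $t_i$ among the a.e.\ levels for which the Caccioppoli and boundary identities hold automatically. You also correctly flag the central difficulty: averaged closeness of varifolds does not imply pointwise-in-$t$ closeness.

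However, your Step~2 is a program, not a proof, and the missing content is precisely the substance of the paper's argument. Four points in particular are not present in your sketch. First, the paper does not use a countable dense family of test functions and diagonalize; it constructs a single measurable $\Th_i:[-\si_b/2,\si_b/2]\to\bbr$ with $\bF(V,|\{w_i=t\}|)\le\Th_i(t)$ for a.e.\ $t$ and $\limsup_i\int_{-\si_b/2}^{\si_b/2}\Th_i\,dt\le C s$, after which the Chebyshev step is immediate. Second, to invoke the sheet-by-sheet graphical picture from HT (Props.~5.5--5.6) one must first restrict to a compact set $\mathscr{G}_i\subset\ov{U}_1$ on which the scaled discrepancy $|\xi_i|$ and tilt $\ta_i$ are small at all scales from $4\ve_i L$ up to a fixed $r_i$; the monotonicity formula then shows $\|V_i\|(\ov{U}_1\setminus\mathscr{G}_i)+\cH^n(P(\ov{U}_1\setminus\mathscr{G}_i))\to 0$, so the bad set contributes negligibly. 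Your outline tacitly assumes the local stack-of-profiles description holds everywhere near $\mathrm{reg}(\spt V)$ at scale $\ve_i$, which is not immediate; the good-set selection is what makes it usable and supplies the uniform-in-$t$ estimates on the Grassmannian distance and the Jacobian of $P$. Third, the argument requires the uniform multiplicity bound $m_i^t(y)\le N$ for the number of sheets meeting a fiber $P^{-1}(y)$, and that bound is obtained from HT Props.~5.5--5.6 only when $s\le s_0:=2\si(N+4+2\si N)^{-1}$; the general statement is then deduced from the case $s\le s_0$, but this smallness threshold must be built in and your proposal does not notice it. Fourth, nothing in your outline controls the contributions from $(\spt V)^c$ or from the singular set of $\spt(V)$; the paper handles these by fixing $\cU_1\Subset\cU_2\Subset\mathrm{reg}(\spt V)$ with $N\cH^n(\spt(V)\setminus\cU_1)<s$ before letting $i\to\infty$, and absorbing the resulting boundary terms into the $\int\Th_i$ bound together with the HT estimate on $\int_{\{|u_i|\ge 1-b\}}|\nabla w_i|$. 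In short, your framework and diagnosis are right, but the mechanism that actually resolves the obstacle — good-set selection via monotonicity, the multiplicity bound with its smallness requirement, and the uniform-in-$t$ HT estimates on $\mathscr{G}_i$ — is where all the work lies and is delegated rather than carried out in your proposal.
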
\vspace{-4ex}
\begin{proof}
	Throughout the proof, we will use the notation of \cite{HT}. Let \(p\in (M,g)\) and \(0<r<\text{inj}(M)/4\). Identifying \(\bbr^{n+1}\) with \(T_pM\), let us define \(f_{p,r}:B^{n+1}(\mathbf{0},4)\ra M\),
	\[f_{p,r}(v)=\exp_{p}(rv)\quad \text{ and }\quad g_{p,r}=r^{-2}f_{p,r}^{*}g.\]
	 As explained in \cite{Guaraco}, there exists \(0<r_0<\text{inj}(M)/4\), depending on \((M^{n+1},g)\), \st if \(p \in M\), \(0<r\leq r_0\) and \(\{u_i\}_{i=1}^{\infty}\) are solutions of \(AC_{\ve_i}(u_i)=0\) on \((B^{n+1}(\mathbf{0},4),g_{p,r})\) with \(\ve_i\ra 0\), all the results of \cite{HT} continue to hold.
	 
	 For simplicity, let us assume that \(V=N|\Si|\) where \(\Si\) is a closed, minimal hypersurface with optimal regularity. The general case will be similar. Let 
	 \begin{equation}
	 	s_0=2\si(N+4+2\si N)^{-1}. \label{4 defn of s_0}
	 \end{equation}
	 and we fix \(0<s\leq s_0\). (The assumption \(s\leq s_0\) will be useful later.) By \cite{HT}*{Proposition 5.1} there exists \(b_0 \in (0,1/2]\) \st
	 \begin{equation*}
	 	\limsup_{i \ra \infty}\int_{\{|u_i|\geq 1-b_0\}}\frac{W(u_i)}{\ve_i} \leq s. 
	 \end{equation*}\newcommand{\ssubset}{\subset\joinrel\subset}
Let us fix \(b \in (0,b_0]\). Using the above equation and \cite{HT}*{Proposition 4.3},
	 \begin{equation}
	 \label{4 small energy}
	 \limsup_{i \ra \infty}\int_{\{|u_i|\geq 1-b\}}|\nabla w_i| \leq s.
	 \end{equation}
	 
	 There exists an open set \(\Om\) containing \(reg(\Si)\) \st \(d_{\Si}=d(-,\Si)\) is smooth on \(\Om\) and the nearest point projection map \(P:\Om \ra \Si\) is also smooth on \(\Om\). By \cite{MN_JDG_survey}*{Proof of Theorem 5.2}, \(im(P)=reg(\Si)\). We choose \(\cU_1 \ssubset \cU_2 \ssubset reg(\Si)\) \st 
	 \begin{equation}
	 	N\cH^n(\Si \setminus\cU_1)<s.\label{4 U_0^c has small area}
	 \end{equation}
	  For \(\cU \ssubset reg(\Si)\), let
	 \[\mathfrak{N}_{r}\cU= \{v: v\in T^{\perp}_p\Si \text{ with } \norm{v}<r,  p \in \cU\}.\]
There exists \(\rh>0\) \st \(\exp:\mathfrak{N}_{2\rh}\cU_2 \ra \Om\) is a diffeomorphism onto its image. For \(j=1,2\), let \(U_j=\exp(\mathfrak{N}_{\rh}\cU_j)\) so that
\[P(\ov{U}_j)=\ov{\cU}_j=\Si \cap \ov{U}_j \quad \text{ and }\quad U_1 \ssubset U_2 \ssubset \Om.\]
We can assume that (choosing a smaller \(\rh\) if necessary), the Jacobian factor
\begin{equation}
	JP(y,S)\leq 2 \; \forall\; (y,S)\in G_n\ov{U}_2.\label{4 JP is bdd}
\end{equation}

On \(\Om\), following \cite{HT}*{Section 5}, we define
\begin{equation*}
	v_i=\begin{cases}
	\frac{\langle\nabla u_i,\nabla d_{\Si}\rangle}{|\nabla u_i|} & \text{if } |\nabla u_i|\neq 0;\\
	0 & \text{if } |\nabla u_i|=0.
	\end{cases}
\end{equation*}
Let \(\ta_i=(1-v_i^2)\ve_i|\nabla u_i|^2\). We choose a compactly supported function \(0\leq \ch_1\leq 1\) \st \(\text{spt}(\ch_1)\subset \Om\) and \(\ch_1 \equiv 1\) on \(\ov{U}_2.\) Since, \(V_i\ra V\) is the varifold sense, using \cite{HT}*{Proposition 4.3} we obtain (\cite{HT}*{(5.7)})
\begin{equation}
	\lim_{i\ra \infty}\int_{\Om}\ch_1\ta_i=\lim_{i\ra \infty}\int_{\Om}\ch_1(1-v_i^2)|\nabla w_i|=0\implies \lim_{i\ra \infty}\int_{\ov{U}_2}\ta_i=0 .\label{4 tilt converges to 0}
\end{equation}
Let us use the notation
\[\xi_i=\frac{\ve_i|\nabla u_i|^2}{2}-\frac{W(u_i)}{\ve_i}.\]
Since the level sets  \(u_i^{-1}(t)\) converge to \(\Si\) in the Hausdorff sense, we can choose a sequence \(r_i\) \st
\begin{itemize}\vspace{-3ex}
	\item \(r_i \ra 0\);
	\item \(\{|u_i|\leq 1-b\}\subset \cN_{r_i}(\Si);\)
	\item \(\ve_i/r_i \ra 0.\)
\end{itemize}\vspace{-3ex}
By \cite{HT}*{Proposition 4.3} and \eqref{4 tilt converges to 0},
\[\lim_{i \ra \infty}\int_{\ov{U}_2}|\xi_i|+\ta_i =0.\]
Let us choose a sequence \(\et_i \ra 0\) \st 
\[\lim_{i \ra \infty}\et_i^{-1}\int_{\ov{U}_2}|\xi_i|+\ta_i =0.\]
There exists \(i_1 \in \bbn\) \st for all \(i \geq i_1\) we have
\begin{itemize}\vth
	\item \(r_i \leq r_0 \) where \(r_0\) is as defined at the beginning of the proof;
	\item \(r_i<d(\ov{U}_1,\del U_2)\).
\end{itemize}\vth\newcommand{\scgi}{\mathscr{G}_i}
As we discussed at the beginning of the proof, for any \(p \in M\) and \(0<r\leq r_0\), if \(u_{\ve}\) is a solution of \(AC_{\ve}(u_{\ve})=0\) on \((B^{n+1}(\mathbf{0},4), g_{p,r})\), Proposition 5.5 and 5.6 of \cite{HT} hold (with the constants depending additionally on \(r_0\)). For our fixed choice of \(s\) and \(b\), we choose \(L\) via Proposition 5.5 and 5.6. For \(i \geq i_1\), let
\[G_i=\ov{U}_1\cap \{y:\int_{B(y,r)}|\xi_i|+\ta_i \leq \et_ir^n \text{ if }4\ve_iL\leq r \leq r_i\}.\]
\(G_i\) may not be \(\cH^n\)-measurable; for our later purpose we choose an \(\cH^n\)-measurable set \(\mathscr{G}_i \subset G_i\) as follows. By the Besicovitch covering theorem, there exist \(\{\mathscr{B}_j\}_{j=1}^{\ell}\) \st each \(\mathscr{B}_j\) is a collection of at-most countably many mutually disjoint open balls and
\[\ov{U}_1\setminus G_i \subset \bigcup_{j=1}^{\ell}\bigcup_{B \in \mathscr{B}_j} B.\]
Let
\[\scgi = \left( \bigcup_{j=1}^{\ell}\bigcup_{B \in \mathscr{B}_j}B \right)^c \cap \ov{U}_1\subset G_i \]
which is a compact set. Using the monotonicity formula for the scaled energy one can show the following (\cite{HT}*{(5.9)}).
\begin{equation}
	\norm{V_i}\left(\ov{U}_1\setminus \mathscr{G}_i\right) + \cH^{n}\left(P(\ov{U}_1\setminus \mathscr{G}_i)\right)\leq c(s,W,g)\et_i^{-1}\int_{\ov{U}_2}|\xi_i|+\ta_i \label{4 G_i}
\end{equation}
which converges to \(0\) as \(i \ra \infty\) by our choice of the sequence \(\{\et_i\}\). \newcommand{\sgi}{\mathscr{G}_i}

We define
\[S_i^t=\{w_i=t\}.\]
For a.e. $t\in [-\si_b/2,\si_b/2]$, \(\{w_i>t\},\{w_i<t\}\in \cm\); \(\db{\{w_i>t\}}+\db{\{w_i<t\}}=\db{M}\); and \(\del\db{\{w_i>t\}}=\db{\{w_i=t\}} \). For such \(t\in [-\si_b/2,\si_b/2]\), \(i\geq i_1\) and Lipschitz continuous function \(\vp:G_nM^{n+1}\ra \bbr\) with \(|\vp|\leq 1\), \(\text{Lip}(\vp)\leq 1\), we obtain the following estimates.\newcommand{\sit}{S^t_i}
\begin{align}
&\Md{|S^t_i|(\vp)-N\md{\Si}(\vp)}\leq \Md{\md{S^t_i \cap \ov{U}_1}(\vp)-N\md{\Si \cap \ov{U}_1}(\vp)} + \norm{\sit}(U_1^c) + N\norm{\Si}(U_1^c); \label{4.1}
\end{align}\vfo
\begin{align}
\Md{\md{S^t_i \cap \ov{U}_1}(\vp)-N\md{\Si \cap \ov{U}_1}(\vp)} \leq \Md{&\md{S^t_i \cap \sgi}(\vp)-N\md{P(\sgi)}(\vp)}\nonumber\\
&+\norm{\sit}(\ov{U}_1\setminus \sgi)+ N \cH^n(P(\ov{U}_1\setminus\sgi)); \label{4.2}
\end{align}\vfo
\begin{align}
	\Md{\md{S^t_i \cap \sgi}(\vp)-N\md{P(\sgi)}(\vp)}& \leq \Md{\md{S^t_i \cap \sgi}(\vp)-P_{\#}\md{S^t_i \cap \sgi}(\vp)}\nonumber\\
	& +\Md{P_{\#}\md{S^t_i \cap \sgi}(\vp)-N\md{P(\sgi)}(\vp)}; \label{4.3}
\end{align}\vfo
\begin{align}
	& \Md{\md{S^t_i \cap \sgi}(\vp)-P_{\#}\md{S^t_i \cap \sgi}(\vp)}\nonumber\\
	& \leq \int_{S_i^t\cap\; \sgi}\Md{\vp(y,T_y\sit)-\vp\left(P(y),DP|_y(T_y\sit)\right)JP(y,T_y\sit)}\; d\cH^n(y).\label{4.4}
\end{align}
Using \cite{HT}*{Proposition 5.6} and a scaling argument as in the Proof of Theorem 1 of \cite{HT}, as \(i \ra \infty\)
\begin{equation*}
d_{G_nM}\left(\left(y,T_y\sit\right),\left(P(y), DP|_y(T_y\sit)\right)\right) \ra 0 \text{ and }JP(y,T_y\sit) \ra 1
\end{equation*}
uniformly for \(|t|\leq \si_b/2\) and \(y\in S_i^t\cap \sgi\). Here \(d_{G_nM}\) denotes the distance in \(G_nM\). Hence, there exists a sequence of positive real numbers \(\{\tht_i\}_{i=1}^{\infty}\) (which does not depend on \(t\)) \st 
\begin{equation}
\lim_{i \ra \infty} \tht_i = 0\;  \text{ and }\; 	\sup_{\vp}\Md{\md{S^t_i \cap \sgi}(\vp)-P_{\#}\md{S^t_i \cap \sgi}(\vp)} \leq \tht_i \cH^n(\sit) \label{4.5}
\end{equation}
for all \(t\in [-\si_b/2,\si_b/2].\) 

Let us choose a cut-off function \(0\leq \ch'_2\leq 1\) with \(\spt(\ch'_2)\subset U_2\) and \(\ch'_2 \equiv 1\) on \(\ov{U}_1\). If \(\mathbf{p}:G_nM\ra M\) is the canonical projection map, \(\ch_2=\ch'_2\circ \mathbf{p}.\)

  For \(y \in P(\sgi)\), let \(m_i^t(y)\) be the cardinality of the set $P^{-1}(y)\cap\sit\cap\sgi$. Using \cite{HT}*{Proposition 5.5, 5.6} and a scaling argument as in the Proof of Theorem 1 of \cite{HT}, there exists \(i_2\in \bbn\) \st for all \(i \geq i_2\), \(y \in P(\sgi)\) and \(|t|\leq \si_b/2\), \(m_i^t(y)\leq N\). (Here we need to use \(s\leq s_0\).) Hence,
\begin{align}
& \Md{P_{\#}\md{S^t_i \cap \sgi}(\vp)-N\md{P(\sgi)}(\vp)}\nonumber\\
&\leq \int_{P(\sgi)}(N-m_i^t(y))\; d\cH^n(y)\nonumber\\
&= N\md{P(\sgi)}(\ch_2)-P_{\#}\md{\sit \cap \sgi}(\ch_2)\nonumber\\
&\leq N\md{\Si}(\ch_2)-P_{\#}\md{\sit}(\ch_2)+2\norm{\sit}(\ov{U}_1\setminus\sgi)+2\norm{\sit}(U_1^c)\;(\text{by }\eqref{4 JP is bdd}).\label{4.6}
\end{align}

We will now integrate the various error terms obtained above with respect to \(t\) in the interval \([-\si_b/2,\si_b/2]\) and let \(i \ra \infty\).
\begin{align}
	&\limsup_{i \ra \infty}\int_{-\si_b/2}^{\si_b/2}\norm{\sit}(U_1^c)\;dt\leq \limsup_{i \ra \infty}\si\norm{V_i}(U_1^c)\leq \si s \;(\text{by } \eqref{4 U_0^c has small area});\\	
	&\int_{-\si_b/2}^{\si_b/2}N\norm{\Si}(U_1^c)\; dt\leq \si s \;(\text{by } \eqref{4 U_0^c has small area});\\
	&\lim_{i \ra \infty}\int_{-\si_b/2}^{\si_b/2}\norm{\sit}(\ov{U}_1\setminus \sgi)+ N \cH^n(P(\ov{U}_1\setminus\sgi))\;dt =0 \; (\text{by \eqref{4 G_i}});\\
	&\limsup_{i \ra \infty}\int_{-\si_b/2}^{\si_b/2}\norm{\sit}(M) \;dt \leq \limsup_{i \ra \infty}\si\norm{V_i}(M)=\si N \cH^n(\Si).
\end{align}
We note that \(V_i \ra N|\Si|\) in the varifold sense implies \(P_{\#}V_i(\ch_2)\ra N|\Si|(\ch_2).\) (\(P_{\#}V_i(\ch_2)\) is well-defined as \(\spt(\ch_2)\subset U_2.\))  Hence, using \eqref{4 JP is bdd} and \eqref{4 small energy},
\begin{align}
&\limsup_{i \ra \infty}\int_{-\si_b/2}^{\si_b/2}\left(N\md{\Si}(\ch_2)-P_{\#}\md{\sit}(\ch_2)\right) dt \nonumber\\
&\leq \lim_{i \ra \infty}\si \left(N\md{\Si}(\ch_2)-P_{\#}V_i(\ch_2)\right)+\limsup_{i \ra \infty}2\int_{\{|u_i|\geq 1-b\}}|\nabla w_i|\nonumber\\
&\leq 2s. \label{4.7}
\end{align}

Hence, using the equations \eqref{4.1}-\eqref{4.7}, we conclude that for all \(i \geq i_2\), there exists a measurable function \(\Th_i:[-\si_b/2,\si_b/2]\ra \bbr\) \st 
\[\bF(N|\Si|,\md{\sit})\leq \Th_i(t)\;\forall t\in [-\si_b/2, \si_b/2] \; \text{ and }\; \limsup_{i \ra \infty}\int_{-\si_b/2}^{\si_b/2}\Th_i(t) \; dt\leq (2+4\si)s.\]
Hence, there exists \(i_3 \in \bbn\) \st for all \(i\geq i_3\),
\[\int_{-\si_b/2}^{\si_b/2}\Th_i(t) \; dt\leq (3+4\si)s.\]
Therefore, for \(i \geq i_3\), there exists \(t_i \in [-\si_b/2, \si_b/2] \) \st
\[\bF\left(N|\Si|,\md{S_i^{t_i}}\right)\leq \Th_i(t_i)\leq (3+4\si)\si_b^{-1}s.\]
Since \(s\in (0,s_0]\) is arbitrary and \(b\leq b_0\leq 1/2\), this finishes the proof of the proposition.
\end{proof}

\begin{pro}\label{pro 4.2}
	Let \(\{u_{i}:M \ra (-1,1) \}_{i=1}^{\infty}\) be a sequence of smooth functions \st items (i) -- (iii) of Proposition \ref{pro 4.1} are satisfied. Additionally, we assume that \(u_i\) is a min-max critical point of \(E_{\ve_i}\) corresponding to the homotopy class \(\tilde{\Pi}\). Let us set \(L_{\ve_i}=\bL_{\ve_i}(\tilde{\Pi})\) so that
	\[L=\bL_{AP}(\Pi)=\frac{1}{2\si} \lim_{i \ra \infty}L_{\ve_i}=\norm{V}(M).\]
	 Then, (using the notation from Proposition \ref{pro 4.1}) for every \(s>0\) and \(b \in (0,b_0(s)]\), there exists \(i_0^{*}\geq i_0\) \st the following holds. For all \(i \geq i_0^{*}\), there exists \(\Ph_i:X \ra \cZ_n(M^{n+1};\bM;\bbz_2)\), \(x_i^{*}\in X\) and \(\de_i > 0\) \st \(\Ph_i \in \Pi\), \(\de_i \ra 0\),
	\begin{equation}
		\sup_{x \in X}\bM\left(\Ph_i(x)\right)\leq \max \left\{\frac{1}{2\si_b}(L_{\ve_i}+\ve_i),L+s\right\} + \de_i \quad \text{ and } \quad \bF\left(V,\md{\Ph_i(x_i^{*})}\right)\leq s. \label{eqn of prop 4.2}
	\end{equation}
\end{pro}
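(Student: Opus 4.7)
The strategy is to promote a minimizing Allen--Cahn sweepout that passes near $u_i$ to an Almgren--Pitts sweepout in $\Pi$ via the Guaraco--Gaspar--Guaraco level-set construction, and then locally adjust it so that one slice reproduces the distinguished level furnished by Proposition \ref{pro 4.1}. Fix $s>0$ and $b\in(0,b_0(s)]$. Since $u_i$ is a min-max critical point of $E_{\ve_i}$ in $\tilde{\Pi}$, choose a minimizing sequence $\{h_j\}_{j\ge 1}\subset\tilde{\Pi}$ for $E_{\ve_i}$ and points $\tilde{y}_j\in\tx$ with $\|h_j(\tilde{y}_j)-u_i\|_{H^1(M)}\to 0$ and $\sup_{\tilde{x}}E_{\ve_i}(h_j(\tilde{x}))\le L_{\ve_i}+\ve_i$. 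Write $w_j(\tilde{x})=F\circ h_j(\tilde{x})$; the $\bbz_2$-equivariance of $h_j$ together with the oddness of $F$ give $w_j(T\tilde{x})=-w_j(\tilde{x})$, so $\{w_j(\tilde{x})=t\}=\{w_j(T\tilde{x})=-t\}$ and the associated level-set cycles descend to the quotient $X$.

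The co-area bound
\[
\int_{-\si_b/2}^{\si_b/2}\cH^n(\{w_j(\tilde{x})=t\})\,dt \;\le\; \int_M|\nabla w_j(\tilde{x})| \;\le\; \tfrac{1}{2}E_{\ve_i}(h_j(\tilde{x}))
\]
produces, for each $\tilde{x}$, a level $t(\tilde{x})\in[-\si_b/2,\si_b/2]$ with $\cH^n(\{w_j(\tilde{x})=t(\tilde{x})\})\le\frac{1}{2\si_b}(L_{\ve_i}+\ve_i)$. Fix a fine subdivision $X[l]$, choose such levels at the vertices of $\tx[l]$ equivariantly ($t(T\tilde{x})=-t(\tilde{x})$), and define $\Phi^{0}_j(\pi(\tilde{x}))=\del\db{\{w_j(\tilde{x})>t(\tilde{x})\}}$ at each vertex (independent of the lift). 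The Almgren--Pitts / Marques--Neves discrete-to-continuous interpolation \cite{Pitts,MN_Willmore,MN_index_2}, applied to neighboring vertices whose $H^1$-proximity and shared slab $[-\si_b/2,\si_b/2]$ control the $\bF$-fineness, extends $\Phi^{0}_j$ to a continuous $\Phi_j\in\Pi$ with $\sup_x\bM(\Phi_j(x))\le\frac{1}{2\si_b}(L_{\ve_i}+\ve_i)+\eta_j$ and $\eta_j\to 0$. This is precisely the ingredient that underlies Theorem \ref{thm GG}.

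Finally, Proposition \ref{pro 4.1} supplies $t_i\in[-\si_b/2,\si_b/2]$ with $\bF(V,|\{F\circ u_i=t_i\}|)\le s$; by a measure-zero adjustment of $t_i$ we may also arrange $\cH^n(\{w_j(\tilde{y}_j)=t_i\})\to\cH^n(\{F\circ u_i=t_i\})\le L+s$ and $\bF(|\{w_j(\tilde{y}_j)=t_i\}|,|\{F\circ u_i=t_i\}|)\to 0$, both consequences of $h_j(\tilde{y}_j)\to u_i$ in $H^1$ and standard a.e.\ slicing. Modify $\Phi_j$ on a small cell containing $x_j^{\ast}:=\pi(\tilde{y}_j)$ by prescribing the level $t_i$ at $x_j^{\ast}$ and sliding continuously (inside $[-\si_b/2,\si_b/2]$) back to the generic level on the boundary of that cell; the mass on the cell is then bounded by $L+s+o_j(1)$ via the same co-area estimate, while the rest of $X$ retains the generic bound. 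A diagonal choice $j=j(i)\to\infty$, together with the assignments $\Phi_i:=\Phi_{j(i)}$, $x_i^{\ast}:=x_{j(i)}^{\ast}$, and $\de_i:=\eta_{j(i)}+o_{j(i)}(1)$, yields the proposition. The main obstacle is the local modification at $x_j^{\ast}$: preserving $\bF$-continuity, keeping $\Phi_i$ in the homotopy class $\Pi$, and respecting the mass cap $\max\{\frac{1}{2\si_b}(L_{\ve_i}+\ve_i),\,L+s\}+\de_i$. Since the interpolation stays inside the slab $[-\si_b/2,\si_b/2]$, the same co-area argument that controls the generic construction also controls the modified region, and $\Pi$ is preserved because the modification factors through a contractible family of level-set sweepouts based at $\Phi_j$.
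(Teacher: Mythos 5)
The overall architecture of your argument (level sets of a minimizing Allen--Cahn sweepout, coarea to pick good levels, interpolation to a continuous $\Pi$-map, diagonalization) matches the paper's, but the way you produce the distinguished slice near $u_i$ diverges from the paper's approach and has genuine gaps.

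The paper does not modify the generic sweepout ``near'' $u_i$ by sliding levels; it places the slice of $u_i$ itself at the distinguished vertex. More precisely, after picking $j$ so that $d_{H^1}(u_i,h(\tx))\leq \ve_i/2$ and a vertex $x_i^*\in\tx[3^{-l_i}]_0$ $H^1$-nearest to $u_i$, the paper defines the discrete map to be $\db{\{w_i>t_i\}}$ at $x_i^*$ (and $\db{\{w_i<t_i\}}$ at $T(x_i^*)$), with $w_i=F\circ u_i$ and $t_i$ from Proposition \ref{pro 4.1}, and the generic level set $\db{\{\tilde h(x)>\tilde\la(x)\}}$ at all other vertices. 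The key ingredient --- absent from your argument --- is Lemma \ref{lem 4.3}: a two-function flat-norm estimate that bounds $\cF$ between the level sets of two $H^1$-close functions. This single estimate simultaneously controls the flat-norm fineness between adjacent ``generic'' vertices and between $x_i^*$ and its neighbors, because $u_i$ is $H^1$-close to $h(x_i^*)$. The continuous map is then produced entirely by the Zhou and Marques--Neves discrete-to-continuous interpolation theorems, which accept exactly this data (mass bound at vertices plus small flat-norm fineness) and preserve the vertex values, so $\Ph_i(x_i^*)=\db{\{w_i=t_i\}}$ exactly.

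Your route has two concrete problems. First, the claim that $\cH^n(\{w_j(\tilde y_j)=t_i\})\to\cH^n(\{F\circ u_i=t_i\})$ and $\bF(|\{w_j(\tilde y_j)=t_i\}|,|\{F\circ u_i=t_i\}|)\to 0$ is not a consequence of $H^1(M)$ convergence at the fixed level $t_i$; $H^1$ convergence only controls $L^1$ convergence of superlevel sets for a.e.\ level (hence flat convergence of currents for a.e.\ level), and gives no upper semicontinuity of the $n$-dimensional measure of a fixed level set, nor varifold $\bF$-convergence. You cannot freely adjust $t_i$ either, since $t_i$ is pinned down by Proposition \ref{pro 4.1} via $\bF(V,|\{w_i=t_i\}|)\leq s$. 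Second, ``sliding continuously inside $[-\si_b/2,\si_b/2]$ back to the generic level'' does not give a pointwise mass bound: the coarea inequality only bounds the \emph{integral} of $\cH^n(\{w=t\})$ over the slab, so the intermediate levels through which you slide may have arbitrarily large mass, and the claimed bound $L+s+o_j(1)$ on the modified cell does not follow. You also do not address the flat-norm fineness between $x_j^*$ and its neighbors, which is required by the interpolation theorems and which in the paper comes precisely from Lemma \ref{lem 4.3} applied to $u_i$ and $h(x_i^*)$. These gaps are structural, not cosmetic: the paper's choice of inserting $u_i$'s slice directly as a vertex value and letting Lemma \ref{lem 4.3} plus the discrete interpolation theorems do all the work is what circumvents them.
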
\vfo
\begin{proof}
	Since \(u_i\) is a min-max critical point of \(E_{\ve_i}\) corresponding to the homotopy class \(\tilde{\Pi}\), for each \(i\), there exists a sequence of continuous, \(\bbz_2\)-equivariant maps \(\{h^i_j:\xt \ra H^1(M)\setminus\{0\}\}_{j=1}^{\infty}\) \st
	\[\sup_{x \in \xt,\; j \in \bbn}E_{\ve_i}\left(h^i_j(x)\right)\leq L_{\ve_i}+\ve_i\quad \text{ and }\quad \lim_{j\ra \infty}d_{H^1(M)}\left(u_i,h^i_j(\xt)\right)=0. \]
 The next Lemma is a restatement of Lemma 8.10 and 8.11 of \cite{Guaraco}.
\begin{lem}[\cite{Guaraco}*{Lemma 8.10, 8.11}]
	Let \(h_1,h_2\in H^1(M)\). For \(\de\in (0,1)\), we set \(C_{\de}=W(1-\de)>0.\) Then, for all \(\ve>0\), \[\cH^{n+1}\left(\{\md{h_1}\leq 1-\de\}\right)\leq \ve C_{\de}^{-1}E_{\ve}(h_1).\] 
	Let \(\al_0\in (-1+\de,1-\de)\) be \st for \(j=1,2\), \(\Om_j=\{h_j > \al_0\}\in \cm.\) Then, for all \(\ve>0\),
	\[\cH^{n+1}\left(\Om_1 \setminus\Om_2\right)\leq \ve C_{\de}^{-1}E_{\ve}(h_2)+(\al_0+1-\de)^2\norm{h_1-h_2}^2_{H^1(M)}.\]
	As a consequence, for \(j=1,2\), if \(\la_j\in  (-1+\de,1-\de)\) such that \(T_j=\del \db{\{h_j>\la_j\}}\in \cZ_n(M^{n+1};\bbz_2)\), 
	\[\cF(T_1,T_2)\leq 2\ve C_{\de}^{-1}(E_{\ve}(h_1)+E_{\ve}(h_2))+2(\al_0+1-\de)^2\norm{h_1-h_2}^2_{H^1(M)}\; \forall\ve>0.\]
	\label{lem 4.3}
\end{lem}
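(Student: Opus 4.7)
The plan is to verify the three inequalities of Lemma~\ref{lem 4.3} in the order stated; each reduces to an elementary pointwise estimate on the double-well potential $W$, followed by integration, and in the final step the natural interpretation of a symmetric difference as a cobordism between the corresponding reduced boundaries. I do not anticipate any genuine obstacle.

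For the first inequality I would exploit the shape of $W$. Because $W$ is even, smooth, nonnegative, and has on $[-1,1]$ only the critical points $0$ (a local maximum) and $\pm 1$ (nondegenerate minima with $W(\pm 1)=0$), the restriction $W|_{[0,1]}$ is strictly decreasing. Hence $W(t)\ge W(1-\de)=C_\de$ whenever $|t|\le 1-\de$, and so
\[
E_\ve(h_1)\;\ge\;\int_{\{|h_1|\le 1-\de\}}\frac{W(h_1)}{\ve}\,d\cH^{n+1}\;\ge\;\frac{C_\de}{\ve}\,\cH^{n+1}(\{|h_1|\le 1-\de\}),
\]
which rearranges to the first claim.

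For the second inequality, the plan is to split $\Om_1\setminus\Om_2=\{h_1>\al_0\}\cap\{h_2\le\al_0\}$ according to whether $h_2\ge -(1-\de)$ or $h_2<-(1-\de)$. On the first piece one has $-(1-\de)\le h_2\le \al_0<1-\de$, so $|h_2|\le 1-\de$ and the first inequality applied to $h_2$ bounds its measure by $\ve C_\de^{-1}E_\ve(h_2)$. On the second piece $h_1-h_2>\al_0+(1-\de)>0$ pointwise, so Chebyshev yields
\[
\cH^{n+1}\bigl(\{h_1>\al_0\}\cap\{h_2<-(1-\de)\}\bigr)\;\le\;\frac{\|h_1-h_2\|_{L^2}^2}{(\al_0+1-\de)^2}\;\le\;\frac{\|h_1-h_2\|_{H^1}^2}{(\al_0+1-\de)^2}.
\]
Adding the two contributions gives the stated bound; I read the factor $(\al_0+1-\de)^2$ appearing in the lemma as a typo for its reciprocal.

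For the third inequality, the key algebraic observation is that with $\bbz_2$ coefficients $\db{\Om_1}+\db{\Om_2}=\db{\Om_1\De\Om_2}$, so $T_1-T_2=\del\db{\Om_1\De\Om_2}$. By the very definition of the flat norm,
\[
\cF(T_1,T_2)\;\le\;\bM(\db{\Om_1\De\Om_2})\;=\;\cH^{n+1}(\Om_1\setminus\Om_2)+\cH^{n+1}(\Om_2\setminus\Om_1),
\]
and applying the second inequality twice, the second time with the roles of $h_1$ and $h_2$ swapped, yields the claimed bound with the symmetric $E_\ve(h_1)+E_\ve(h_2)$ term and an overall factor of $2$. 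The only small subtlety to keep in mind is checking that the constant $(\al_0+1-\de)^{-2}$ applies to both summands; this is automatic because after the swap the pointwise difference on the bad piece is $h_2-h_1>\al_0+(1-\de)$, from the same split applied symmetrically.
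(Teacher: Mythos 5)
Your proof is correct, and it is worth noting that the paper does not actually supply a proof of this lemma: it is explicitly stated to be a restatement of Lemma 8.10 and 8.11 of Guaraco, and the reader is referred there. So you are not deviating from the paper's argument; you are reconstructing the cited one, and your reconstruction is the standard, expected route for each of the three estimates. Part 1 is exactly the intended energy lower bound using monotonicity of $W$ on $[0,1]$; part 2 is the intended split of $\Om_1\setminus\Om_2$ according to whether $h_2$ is within threshold of $-1$; part 3 is the intended use of $T_1-T_2=\del\db{\Om_1\De\Om_2}$ (valid over $\bbz_2$) plus the definition of the flat norm.

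Two remarks. First, you are right that $(\al_0+1-\de)^2$ in the statement must be read as $(\al_0+1-\de)^{-2}$: the Chebyshev step produces the reciprocal, and this is confirmed by the way the lemma is actually invoked later in the paper, where the factor $(1-b+\al)^{-2}$ appears. Second, a small notational wrinkle in the third inequality that you smooth over a bit quickly: the lemma allows different levels $\la_1,\la_2$ for the two functions, yet the bound is phrased with a single $\al_0$. If you apply the second inequality twice, the two Chebyshev constants are $(\la_1+1-\de)^{-2}$ and $(\la_2+1-\de)^{-2}$ respectively; one then bounds both by the worse of the two (equivalently, takes $\al_0=\min\{\la_1,\la_2\}$). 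Your derivation, carried out literally with a single $\al_0$, in fact gives the slightly sharper constant $\ve C_\de^{-1}(E_\ve(h_1)+E_\ve(h_2))$ in place of the stated $2\ve C_\de^{-1}(E_\ve(h_1)+E_\ve(h_2))$, which is harmless. Neither point affects correctness.
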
\vfo
	We recall that \(X\) is a subcomplex of \(\sci^N[1]\) for some \(N \in \bbn\). There exists \(\al \in (-1+b, 1-b)\) \st for all \(i,j\in \bbn\) and \(x \in \pi^{-1}(X\cap \mathbb{Q}^N)\), \(\{h^i_j(x)> \al\}\in \cm\). Let us fix \(i\geq i_0\) (\(i_0\) is as in Proposition \ref{pro 4.1}); let \(j_0\in \bbn\) \st 
\[d_{H^1(M)}\left(u_i,h^i_{j_0}(\xt)\right)\leq \ve_i/2.\]
For simplicity, let us denote the map \(h^i_{j_0}\) by \(h\). We choose \(l_i\in \bbn\) \st if \(x,x'\) belong to a common cell in \(\xt[3^{-l_{i}}]\), \(\norm{h(x)-h(x')}_{H^1(M)}\leq \ve_i/2.\) Let \(x_i^*\in \xt[3^{-l_{i}}]_0\) \st 
\[d_{H^1(M)}\left(u_i,h(x_i^*)\right)=d_{H^1(M)}\left( u_i ,h\left(\xt[3^{-l_{i}}]_0\right) \right)\]
which is bounded by \(\ve_i\) by our choice of \(j_0\) and \(l_{i}\). Following the argument in \cite{Guaraco,GG1}, there exists a function \(\la:\tx \ra (-1+b, 1+b)\) \st for all \(x \in \tx\) the following conditions are satisfied.
\begin{itemize}\vth
	\item $\la(T(x))=-\la(x)$;
	\item \(\{h(x)>\la(x)\}, \{h(x)<\la(x)\}\in \cm\) with \(\db{\{h(x)>\la(x)\}}+\db{\{h(x)<\la(x)\}}=\db{M}\);
	\item Denoting \(\tilde{h}=F\circ h\) and \(\tilde{\la}=F\circ \la\); \(2\si_b\bM\big(\del\db{\{\tilde{h}(x)>\tilde{\la}(x)\}}\big) \leq E_{\ve_i}(h(x)). \)
\end{itemize}\vth
One can define a discrete, \(\bbz_2\)-equivariant map \(\tilde{\vp}_i:\xt[3^{-l_{i}}]_0 \ra \mathbf{I}_{n+1}(M^{n+1};\bbz_2)\) which is fine in the flat norm as follows (\(w_i\), \(t_i\) are as in Proposition \ref{pro 4.1}).
\begin{equation*}
	\tilde{\vp}_i(x)=
	\begin{cases}
	\db{\{\tilde{h}(x)>\tilde{\la}(x)\}} & \text{if } x \notin \{x_i^*,T(x_i^*)\};\\
	\db{\{w_i>t_i\}} & \text{if } x = x_i^*;\\
	\db{\{w_i<t_i\}} & \text{if } x = T(x_i^*).
	\end{cases}
\end{equation*}
If \(\vp_i=\del \circ \tilde{\vp}_i\), using Lemma \ref{lem 4.3}, fineness of \(\vp_i\) \wrt the flat norm
\[\mathbf{f}^{\cF}(\vp_i)\leq 4 \ve_i C^{-1}_b(L_{\ve_i}+\ve_i)+2(1-b+\al)^{-2}\ve_i^2\]
which converges to \(0\) as \(i \ra \infty\). Moreover, \(\bF(V,\md{\{w_i=t_i\}})\leq s\) implies that \(\bM\left(\db{\{w_i=t_i\}}\right)\leq L+s\). Hence,
\[\sup_{x\in X[3^{-l_{i}}]_0}\bM(\vp_i(x))\leq \max \left\{\frac{1}{2\si_b}(L_{\ve_i}+\ve_i),L+s\right\}.\]  
As argued in \cite{Guaraco,GG1}, one can apply the interpolation theorem of Zhou \cite{Zhou}*{Proposition 5.8} to produce a sequence of discrete maps whose fineness \wrt the mass norm converges to \(0\) and then, using the interpolation theorem of Marques and Neves \cite{MN_Willmore}*{Theorem 14.1}, one can find a sequence of maps continuous in the mass norm. More precisely, there exists \(i_0^*\geq i_0\) \st for all \(i\geq i_0^*\) there exist \(\Ph_i:X \ra \cZ_n(M^{n+1};\bM;\bbz_2)\) and \(\de_i > 0\) \st \(\Ph_i \in \Pi\), \(\de_i \ra 0\) and
\begin{equation*}
\sup_{x \in X}\bM\left(\Ph_i(x)\right)\leq \max \left\{\frac{1}{2\si_b}(L_{\ve_i}+\ve_i),L+s\right\}+\de_i. 
\end{equation*}
Moreover, \(\Ph_i(x)=\vp_i(x)\) for all $x\in X[3^{-l_{i}}]_0 $. In particular, \(\Ph_i(x_i^{*})=\vp_i(x_i^*)=\db{\{w_i=t_i\}}\); hence,
\[\bF\left(V,\md{\Ph_i(x_i^{*})}\right)\leq s.\]
\end{proof}
\begin{proof}[Proof of Theorem \ref{thm critical set}]
	By letting \(s\ra 0\), \(b \ra 0\) and \(i \ra \infty\) in the above Proposition \ref{pro 4.2}, we obtain Theorem \ref{thm critical set}. More precisely, let \(\{s_m\}_{m=1}^{\infty}\) be a sequence \st \(s_m \ra 0\). We choose \(b_m\in (0, b_0(s_m)]\) \st \(b_m \ra 0\). By Proposition \ref{pro 4.2}, for every \(m \in \bbn\), there exist \(\Ps_m:X \ra \cZ_n(M^{n+1};\bM;\bbz_2)\), \(i(m)\in \bbn\) and \(x_m^{\bullet}\in X\) \st \(\Ps_m \in \Pi\), \(i(m)>i(m-1)\),
	\begin{equation*}
	\sup_{x \in X}\bM\left(\Ps_m(x)\right)\leq \max \left\{\frac{1}{2\si_{b_m}}(L_{\ve_{i(m)}}+\ve_{i(m)}),L+s_m\right\} + s_m \; \text{ and } \; \bF\left(V,\md{\Ps_m(x_m^{\bullet})}\right)\leq s_m. \label{eqn of prop 4.2 bis}
	\end{equation*}
This implies \(\{\Ps_m\}_{m=1}^{\infty}\) is a minimizing sequence in \(\Pi\) and \(V \in \mathbf{C}\left(\{\Ps_m\}\right)\).	
\end{proof}
\nocite{*}
\medskip

\bibliographystyle{amsalpha}
\bibliography{apequalsac}

\end{document}